\def\journal@name{The Annals of Applied Probability}
\numberwithin{equation}{section}
\theoremstyle{plain}
\newtheorem{theorem}{Theorem}[section]
\newtheorem{corollary}[theorem]{Corollary}
\newtheorem{proposition}[theorem]{Proposition}
\newtheorem{lemma}[theorem]{Lemma}
\theoremstyle{definition}
\newtheorem{definition}[theorem]{Definition}
\newtheorem{assumption}[theorem]{Assumption}
\newtheorem{remark}[theorem]{Remark}
\newcommand{\norm}[1]{\lVert#1\rVert}
\newcommand{\abs}[1]{\left\lvert#1\right\rvert}
\newcommand{\argmin}{\mathop{\mathrm{argmin}}\limits}
\newcommand{\scalar}[2]{\left\langle #1, #2 \right\rangle}
\newcommand{\imsreprintdoi}{10.1214/25-AAP2289}
\newcommand{\imsreprintyear}{2026}
\newcommand{\imsreprintvolume}{36}
\newcommand{\imsreprintissue}{3}
\newcommand{\imsreprintfirstpage}{2594}
\newcommand{\imsreprintlastpage}{2625}
\newcommand{\imsreprintdata}{%
  \imsreprintyear, Vol.~\imsreprintvolume, No.~\imsreprintissue,
  \imsreprintfirstpage--\imsreprintlastpage%
}
\providecommand{\owner@url}{https://www.imstat.org}
\def\arxivnotice@fmt{%
  {%
    \protected@xdef\@thefnmark{}%
    \safe@footnotetext{\vskip-3\p@\noindent\fbox{\parbox{\textwidth}{\arxiv@style\arxiv@text}}}%
  }%
}
\def\put@fmt@data{%
  \copyright@fmt
  \history@fmt
  \@thanks
  \keyword@fmt
  \arxivnotice@fmt
  \abstract@fmt}
\begin{document}

\begin{frontmatter}
\title{Deep Operator BSDE: a Numerical Scheme to Approximate Solution Operators}
\runtitle{A Numerical Scheme to Approximate Solution Operators}

\begin{aug}
\author[A]{\fnms{Pere}~\snm{Diaz Lozano}\ead[label=e1]{peredl@math.uio.no}}
\and
\author[A]{\fnms{Giulia}~\snm{Di Nunno}\ead[label=e2]{giulian@math.uio.no}}

\address[A]{Department of Mathematics,
University of Oslo, Oslo, Norway\printead[presep={,\ }]{e1,e2}}
\end{aug}

\begin{abstract}
Motivated by dynamic risk measures and conditional $g$-expectations, in this work we propose a numerical method to approximate the solution operator given by a Backward Stochastic Differential Equation (BSDE). The main ingredients for this are the Wiener chaos decomposition and the classical Euler scheme for BSDEs. We show convergence of this scheme under very mild assumptions, and provide a rate of convergence in more restrictive cases. We then implement it using neural networks, and we present several numerical examples where we can check the accuracy of the method.
\end{abstract}

\begin{keyword}[class=MSC]
\kwd[Primary ]{60H10}
\kwd{60H35}
\kwd{65G99}
\kwd[; secondary ]{65C05}
\kwd{60H07}
\kwd{65C05}
\kwd{68T07}
\end{keyword}

\begin{keyword}
\kwd{Backward stochastic differential equations}
\kwd{numerical methods for BSDEs}
\kwd{Wiener chaos decomposition}
\kwd{$g$-expectations}
\end{keyword}

\end{frontmatter}


\section{Introduction}

We are interested in the numerical approximation of the operator given by the solution of a backward stochastic differential equation (BSDE for short) of the following form:
\begin{flalign}\label{BSDE}
    Y_{t} = \xi + \int_{t}^{T} g(s, Y_{s}, Z_{s}) ds - \int_{t}^{T} Z_{s} \cdot dB_{s}, \quad t \in [0,T],
\end{flalign}
where $B$ is a $d$-dimensional standard Brownian motion, $\mathbb{F} = (\mathcal{F}_{t})_{t \in [0,T]}$ is its natural and augmented filtration, $\xi$ is an $\mathbb{R}$-valued $\mathcal{F}_{T}$-measurable random variable, and the generator $g \colon [0,T] \times \mathbb{R} \times \mathbb{R}^{d} \to \mathbb{R}$ is a measurable map. A solution to this equation is essentially defined to be a pair of adapted processes $(Y,Z) = (Y_{t}, Z_{t})_{t \in [0,T]}$ satisfying equation (\ref{BSDE}). BSDEs were introduced by \cite{BismutJean_Michel1973Ccfi} in the case of a linear generator, and were later generalized by \cite{PARDOUX199055} to the nonlinear case. Several applications of BSDEs in finance can be found for example in \cite{ElKarouiN.1997BSDE}, where they are used in pricing and hedging of financial products. We also mention the connection between BSDEs with generator $g$ and $g$-conditional expectations with respect to $\mathbb{F}$, which are defined as the collection of operators 
\begin{gather}\label{g expectation}
    \mathcal{E}^{g} = (\mathcal{E}_{t}^{g})_{t \in [0,T]}, \quad \mathcal{E}_{t}^{g} \colon \xi \mapsto Y_{t}^{\xi},
\end{gather}
where $Y_{t}^{\xi}$ denotes the solution to (\ref{BSDE}) with terminal condition $\xi$. See  \cite{Peng1997} for further details on $g$-expectations and \cite{RosazzaGianinEmanuela2006Rmvg} for the connection with dynamic risk measures.    

There is already an extensive research on the numerical approximation of (\ref{BSDE}), with most methods relying on an Euler-type discretization scheme on a time grid $\pi \coloneqq \{t_{i}\}_{0 \leq i \leq n} \subset [0,T]$, leading to the recursive formulas 
\begin{gather*}
    Z_{t_{i}}^{\pi} \coloneqq \frac{1}{\Delta t_{i}} \mathbb{E} \big[ Y_{t_{i+1}}^{\pi} \Delta B_{t_{i}}  \vert \mathcal{F}_{t_{i}} \big], \quad Y_{t_{i}}^{\pi} \coloneqq \mathbb{E} \big[ Y_{t_{i+1}}^{\pi} \vert \mathcal{F}_{t_{i}} \big] + \Delta t_{i} g(t_{i}, Y_{t_{i}}^{\pi}, Z_{t_{i}}^{\pi}) ,
\end{gather*}
which is called the backward Euler scheme for BSDEs. The main references in this topic are due to \cite{BouchardBruno2004DaaM} and \cite{zhang_bsde_method}, although the basic ideas date back to \cite{Chevance_1997}, who considered generators independent of $Z$, and \cite{Bally1997}, who considered a random time mesh.

In \cite{BouchardBruno2004DaaM}, the convergence of the backward Euler scheme was proved to be of order $1/2$, under the hypothesis that the terminal condition is a Lipschitz function of the final value of a forward SDE. In \cite{zhang_bsde_method}, the same order was obtained also for path-dependent functionals.

The critical challenge with the implementation of the scheme is the computation of conditional expectations at each time-step. Some approaches include methods based on least-squares regression (\cite{GobetEmmanuel2005ARMC}, \cite{LemorJean-Philippe2006RoCo}), Malliavin calculus (\cite{BouchardBruno2004OtMa}, \cite{GOBETEMMANUEL2016Aobs}), quantization techniques (\cite{Bally2003Aqaf}), tree-based methods (\cite{BriandPhilippe2021DtfB}) and cubature methods (\cite{CrisanD.2012Sbsd}). More recently, there has been a surge in the use of deep learning based models to estimate these conditional expectations, which outperform more classical approaches in high-dimensional settings, see \cite{HureCome2020Dbsf}. These can also be interpreted as nonlinear least-squares Monte Carlo. For a comprehensive survey on the topic of numerical methods for BSDEs, see \cite{ChessariJared2023Nmfb}. 

We now drive the attention to the fact that the methods available in the literature approximate (\ref{BSDE}) for a \textit{fixed terminal condition}, so that whenever we want to evaluate the solution at a different one, the proposed algorithm needs to be re-executed from scratch. Contrary to that, and motivated by (\ref{g expectation}), we propose a numerical scheme that approximates the operator given by the solution to the BSDE (\ref{BSDE}) with generator $g$, 
\begin{gather}\label{solution operator}
    \xi \mapsto (Y_{t}^{\xi}, Z_{t}^{\xi})_{t \in [0,T]},
\end{gather}
which will be called the \textit{solution operator.} 

\subsection*{Related work}

There is a growing literature on operator learning for semi-linear PDEs, which—via the nonlinear Feynman–Kac formula—are equivalent to Markovian Forward–Backward SDEs (FBSDE). These works seek to approximate the map that sends the drift $b$, diffusion $\sigma$, and terminal condition $\varphi$ to the solution field $u$, where $u\colon [0,T] \times \mathbb{R}^n \to \mathbb{R}$ satisfies 
\begin{flalign*}
        & \partial_t u + \frac{1}{2} \text{Tr} \big( \sigma \sigma^T D_x^2 u \big) + \scalar{b}{\nabla_x u} + g\big(t, x, u, \sigma^T \nabla_x u \big) = 0, \\
        & u(T,x) = \varphi(x).
\end{flalign*}
A common strategy is to identify each of $(b, \sigma, \varphi)$ by  its values on a fixed grid and feed that finite vector into the network. Representative methods include DeepONets \citep{deepOnet}, Integral‐Kernel Neural Operators \citep{anandkumar2019neural}, Fourier Neural Operators \citep{neuraloperators1}, and Physics‐Informed Neural Operators \citep{li2021fourier}. An alternative, the structure-informed approach of \cite{benth2024structureinformedoperatorlearningparabolic}, projects inputs onto a truncated functional basis and builds the operator network on those basis coefficients. For a comprehensive survey of operator learning for PDEs, see Section 4 in \cite{gonon2024overviewmachinelearningmethods}.

On the other hand, many works consider PDEs whose coefficients depend on a finite parameter vector $\mu$. They train a single network
\begin{gather*}
    u_\theta \colon (t,x,\mu) \mapsto  u(t,x;\mu),
\end{gather*}
where $\theta$ collects all model parameters. Some methods enforce the FBSDE representation to solve a statistical learning problem (e.g. \cite{NEURIPS2020_c1714160}, \cite{Sabate_Vidales04072021}, \cite{LRV}), while others minimize the PDE residual over $(t,x,\mu)$ directly (e.g. \cite{GLAU2022127355}, \cite{HUANG2024106354}). Once trained, these solvers instantly deliver both solution values and sensitivities for any $\mu$ in the training range, with no retraining per choice of parameters required.

In this context, our method is closest in spirit to the structure-informed framework of \cite{benth2024structureinformedoperatorlearningparabolic}, since we also approximate infinite-dimensional operators by truncating a suitable basis. However, after fixing a finite truncation, our method aligns with the approach in \cite{NEURIPS2020_c1714160}.

\subsection*{Contributions and paper organization}

In contrast to these PDE-based methods, our framework is purely probabilistic and works directly with the BSDE formulation. This lets us handle a broader class of terminal conditions, going beyond the Markovian FBSDE setting. To our knowledge, this is the first numerical scheme that approximates the solution operator of a BSDE. In this work we will present the framework, propose the algorithm and study its convergence properties.
 
 We now describe briefly the fundamental ingredients of our approach. The main tool is the Wiener chaos decomposition of $\xi \in L^{2}(\mathcal{F}_{T})$, 
 \begin{gather*}
       \xi  = \sum_{k \geq 0} \sum_{|a|=k} d_{a} \prod_{i \geq 1} H_{a_{i}}\Big( \int_{0}^{T} h_{i}(s) \cdot dB_{s} \Big), 
\end{gather*}
where $H_{n}$ denotes the Hermite polynomial of degree $n$, $(h_{i})_{i \geq 1}$ is an orthonormal basis of $L^{2}([0,T]; \allowbreak\mathbb{R}^{d})$, and $|a| = \sum_{i \geq 1} a_{i}$ for any sequence of non-negative integers $a = (a_{i})_{i \geq 1}$. 

After a brief review of background concepts in Section \ref{section 2}, the first part of the paper (Section \ref{section 3}) consists of two main results. We start by showing that the elements of the basis of the chaos decomposition satisfy an infinite-dimensional linear SDE, which is later used to write the solution to (\ref{BSDE}) for $\xi$ above as the solution to a Forward-Backward SDE (FBSDE) system
\begin{flalign*}
    X_{t} &= x_{0} + \int_{0}^{t} b(s,X_{s}) ds + \int_{0}^{t} \sigma(s, X_{s}) dB_{s} \\
    Y_{t} &= \sum_{k \geq 0} \sum_{|a|=k} d_{a} X_{T}^{a} + \int_{t}^{T} g(s, Y_{s}, Z_{s} ) ds - \int_{t}^{T} Z_{s} \cdot dB_{s},
\end{flalign*}
where the forward process $X$ is infinite-dimensional. We then show that there exist two measurable maps that provide a Markovian representation of the BSDE solution, that is such that $Y_{t} = u(t, X_{t})$ and $Z_{t} = v(t, X_{t})$. We later incorporate $\xi$ as an additional variable of these maps and prove joint measurability with respect to the product $\sigma$-algebra.

The second part of the paper (Section \ref{section 4}) deals with the numerical scheme. Motivated by classical backward Euler-type methods for BSDEs, we present an operator Euler scheme to approximate recursively the solution operator (\ref{solution operator}).

We then proceed to prove its convergence. Furthermore, we can obtain a rate of convergence under additional assumptions on the terminal conditions and the generator, exploiting some regularity results of \cite{zhang_bsde_method}, \cite{HuYaozhong2011MCFB}.  

We conclude the section by proving that these recursive operators can be approximated arbitrarily well by finite-dimensional maps, using this to show how to implement the scheme. To address the high dimensionality of the problem we make use of neural networks, resulting in an algorithm (Section \ref{section 5}) which we call Deep Operator BSDE. We conclude the paper with some numerical examples (Section \ref{section 6}), where we assess the performance of the method.

\clearpage
\section{BSDEs and their numerical approximation}\label{section 2}

\subsection{Definitions and notation}

Let us fix a complete probability space $(\Omega, \mathcal{F}, \mathbb{P})$ carrying an $\mathbb{R}^{d}$-valued Brownian motion $B = (B_{t})_{t \in [0,T]}$. We will make use of the following notation:

\begin{itemize}
    \item $\mathbb{F} = (\mathcal{F}_{t})_{ t \in [0,T]}$ the filtration generated by the Brownian motion $B$ and augmented with the $\mathbb{P}$-null sets;

    \item For any $p \geq 1$, $L^{p}(\mathcal{F}_{t}, \mathbb{R}^{n})$, $t \in [0,T]$, the space of all $\mathcal{F}_{t}$-measurable random variables $X\colon \Omega \to \mathbb{R}^{n}$ satisfying $\norm{X}_{L^{p}(\mathcal{F}_{t}, \mathbb{R}^{n})}^{p} \coloneqq \mathbb{E} \abs{X}^{p}  < \infty$;

    \item For any $X \in L^{1}(\mathcal{F}_{T})$ and $t \in [0,T]$, we denote $\mathbb{E}[X \vert \mathcal{F}_{t}]$ by $\mathbb{E}_{t}[X]$;

    \item $\mathbb{H}_{T}^{2}(\mathbb{R}^{n})$ the space of all $\mathbb{F}$-predictable processes $\varphi \colon \Omega \times [0,T] \to \mathbb{R}^{n}$ such that $\norm{\varphi}_{\mathbb{H}_{T}^{2}(\mathbb{R}^{n})}^{2} \coloneqq  \mathbb{E} \Big[ \int_{0}^{T} \vert \varphi_{t} \vert^{2} dt  \Big] < \infty$;

    \item $\mathbb{S}_{T}^{2}(\mathbb{R}^{n})$ the space of all $\mathbb{F}$-predictable and continuous processes $\varphi \colon \Omega \times [0,T] \to \mathbb{R}^{n}$ such that $\norm{\varphi}_{\mathbb{S}_{T}^{2}(\mathbb{R}^{n})}^{2} \coloneqq  \mathbb{E} \Big[ \sup_{t \in [0,T]} \vert \varphi_{t} \vert^{2}   \Big] < \infty$.
\end{itemize}

Whenever the dimension is clear, to ease the notation we omit the dependence on $\mathbb{R}^{n}$.

\subsection{Backward stochastic differential equations}

The precise meaning of a solution to a BSDE is clarified in the following definition.

\begin{definition}
    Let $g \colon [0,T] \times \mathbb{R} \times \mathbb{R}^{d} \to \mathbb{R}$ be measurable, and let $\xi: \Omega \to \mathbb{R}$ be a $\mathcal{F}_{T}$-measurable random variable. We say that a pair of stochastic processes $(Y,Z)$ is a solution of the BSDE with data $(\xi, g)$ if the following conditions are met:
\begin{enumerate}[label=(\roman*)]
    \item equation (\ref{BSDE}) is satisfied $\mathbb{P}$-a.s.;
    \item $Y$ is continuous $\mathbb{P}$-a.s.;
    \item $(Y,Z) \in \mathbb{H}_{T}^{2}(\mathbb{R}) \times \mathbb{H}_{T}^{2}(\mathbb{R}^{d})$.
\end{enumerate}
\end{definition}

The most standard set of assumptions on the generator and the terminal condition under which (\ref{BSDE}) has a unique solution is the following. 

\begin{assumption}\label{assumption 1}

\begin{enumerate}[label=(\roman*)]
    \item  $g: [0,T] \times \mathbb{R} \times \mathbb{R}^{d} \to \mathbb{R}$ is measurable and uniformly Lipschitz in $(y,z)$, i.e. there exists a constant $[g]_{L} > 0$ such that $dt$-a.e.
    \begin{gather*}
        \vert g(t, y_{1}, z_{1}) - g(t, y_{2}, z_{2}) \vert \leq [g]_{L} (\vert y_{1}-y_{2} \vert + \vert z_{1} - z_{2} \vert ) \quad \forall (y_{1}, z_{1}), (y_{2}, z_{2});
    \end{gather*}

    \item $\int_{0}^{T} \vert g(t, 0, 0) \vert^2 dt  < \infty$;

    \item $\xi \in L^{2}(\mathcal{F}_{T})$.
\end{enumerate}
\end{assumption}

Let us now state the well-known result on the well-posedness of (\ref{BSDE}), plus some a priori estimates of the solution. See e.g. Theorem 4.2.1 and Theorem 4.3.1 in \cite{ZhangJianfeng2017BSDE} for the proof.

\begin{theorem}\label{theorem 2}
    Under Assumption \ref{assumption 1}, the BSDE (\ref{BSDE}) has a unique solution $(Y,Z)$ in $\mathbb{H}_{T}^{2}(\mathbb{R}) \times \mathbb{H}_{T}^{2}(\mathbb{R}^{d})$, which actually belongs to $\mathbb{S}_{T}^{2}(\mathbb{R}) \times \mathbb{H}_{T}^{2}(\mathbb{R}^{d})$. Moreover, we have the following estimate:
    \begin{gather*}
        \norm{Y}_{S_{T}^{2}(\mathbb{R})} + \norm{Z}_{H_{T}^{2}(\mathbb{R}^{d})} \leq C \Big\{ \norm{\xi}_{L^{2}(\mathcal{F}_{T})}  + \int_{0}^{T} \vert g(t, 0, 0) \vert dt  \Big\},
    \end{gather*}
    where $C$ is a constant depending only on $T$, $[g]_{L}$ and $d$.
\end{theorem}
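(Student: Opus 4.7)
The plan is to prove existence and uniqueness by a Banach fixed point argument on the product space $\mathbb{H}_{T}^{2}(\mathbb{R}) \times \mathbb{H}_{T}^{2}(\mathbb{R}^{d})$, equipped with a weighted norm that makes the natural iteration map a strict contraction, and then to derive the a priori estimates by applying It\^o's formula to the squared solution.

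First, given any candidate $(y,z) \in \mathbb{H}_{T}^{2}(\mathbb{R}) \times \mathbb{H}_{T}^{2}(\mathbb{R}^{d})$, I would form the $L^{2}$-martingale
\begin{gather*}
M_{t} := \mathbb{E}_{t}\Big[ \xi + \int_{0}^{T} g(s, y_{s}, z_{s})\, ds \Big].
\end{gather*}
Assumption \ref{assumption 1} together with the Lipschitz property applied at $(0,0)$ ensure that the integrand is in $L^{2}(\mathcal{F}_{T})$, so $M$ is a square-integrable martingale. By the Brownian martingale representation theorem there is a unique $Z \in \mathbb{H}_{T}^{2}(\mathbb{R}^{d})$ with $M_{t} = M_{0} + \int_{0}^{t} Z_{s} \cdot dB_{s}$. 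Setting $Y_{t} := M_{t} - \int_{0}^{t} g(s, y_{s}, z_{s})\, ds$ produces a continuous (a.s.) $\mathbb{F}$-adapted process, and one verifies that $(Y,Z)$ solves equation (\ref{BSDE}) with $g(s,\cdot,\cdot)$ replaced by $g(s,y_{s},z_{s})$. This defines a map $\Phi \colon \mathbb{H}_{T}^{2} \times \mathbb{H}_{T}^{2} \to \mathbb{H}_{T}^{2} \times \mathbb{H}_{T}^{2}$, whose fixed points are precisely the solutions of the BSDE.

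To show $\Phi$ is a contraction, I would take two inputs $(y^{1}, z^{1}), (y^{2}, z^{2})$ with images $(Y^{1}, Z^{1}), (Y^{2}, Z^{2})$, write $\delta Y = Y^{1}-Y^{2}$, and apply It\^o's formula to $e^{\beta t} \abs{\delta Y_{t}}^{2}$ on $[0,T]$. The terminal boundary condition vanishes, and the Lipschitz assumption together with Young's inequality bound the cross terms by $C([g]_{L}, \varepsilon)(\abs{\delta y}^{2} + \abs{\delta z}^{2}) + \varepsilon \abs{\delta Z}^{2}$. Choosing $\beta$ large enough (depending on $[g]_{L}$) and $\varepsilon$ small makes $\Phi$ strictly contractive in the equivalent norm
\begin{gather*}
\norm{(y,z)}_{\beta}^{2} := \mathbb{E} \int_{0}^{T} e^{\beta s}\big( \abs{y_{s}}^{2} + \abs{z_{s}}^{2} \big) ds.
\end{gather*}
The Banach fixed point theorem then yields the unique solution $(Y,Z)$ in $\mathbb{H}_{T}^{2} \times \mathbb{H}_{T}^{2}$, with $Y$ continuous by construction.

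For the a priori estimate and the promotion $Y \in \mathbb{S}_{T}^{2}(\mathbb{R})$, I would apply It\^o's formula to $\abs{Y_{t}}^{2}$ (or again to $e^{\beta t}\abs{Y_{t}}^{2}$) between $t$ and $T$. Taking expectations kills the stochastic integral; using $2yg(s,y,z) \leq c\abs{y}^{2} + \frac{1}{2}\abs{z}^{2} + c\abs{g(s,0,0)}^{2}$ via Lipschitzness and Young's inequality yields a Gronwall-type bound on $\mathbb{E}\abs{Y_{t}}^{2} + \mathbb{E}\int_{t}^{T}\abs{Z_{s}}^{2}ds$. For the supremum norm, keeping the martingale term and using the Burkholder--Davis--Gundy inequality followed again by Young's inequality allows one to absorb $\frac{1}{2}\mathbb{E}\sup_{t}\abs{Y_{t}}^{2}$ into the left-hand side, producing the claimed estimate with constant $C = C(T, [g]_{L}, d)$.

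The main obstacle is the careful bookkeeping of constants: the weight $\beta$ must be large enough to dominate both the $y$- and $z$-Lipschitz contributions after the Young split, yet the BDG step in the sup-estimate introduces a new constant that must also be absorbed. Once the right $\varepsilon$'s are chosen all the estimates collapse to the desired form, but this balancing is the one place where the argument cannot be done carelessly.
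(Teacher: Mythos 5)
The paper does not prove this theorem itself; it quotes it from the literature (Theorems 4.2.1 and 4.3.1 of the cited book of Zhang), and your proposal is exactly that standard proof: martingale representation to define the Picard map $\Phi$, a contraction estimate in the $e^{\beta t}$-weighted $\mathbb{H}^{2}\times\mathbb{H}^{2}$ norm, and then It\^o plus Burkholder--Davis--Gundy to promote $Y$ to $\mathbb{S}_{T}^{2}$ and obtain the a priori bound. The overall architecture is correct and is the same route as the reference.

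One step, as written, does not deliver the estimate in the form stated. You bound $2yg(s,y,z)\leq c\abs{y}^{2}+\tfrac{1}{2}\abs{z}^{2}+c\abs{g(s,0,0)}^{2}$, which after integration and Gronwall produces a bound involving $\int_{0}^{T}\abs{g(s,0,0)}^{2}\,ds$. But Assumption \ref{assumption 1}(ii) only guarantees $\int_{0}^{T}\abs{g(t,0,0)}\,dt<\infty$, so that quantity may be infinite, and the theorem's right-hand side involves the $L^{1}$-in-time norm $\int_{0}^{T}\abs{g(t,0,0)}\,dt$, not its square-integral. The standard fix is to keep the driver term linear in $s$: estimate
\begin{gather*}
\mathbb{E}\int_{t}^{T}2\abs{Y_{s}}\,\abs{g(s,0,0)}\,ds\;\leq\;\varepsilon\,\mathbb{E}\Big[\sup_{s}\abs{Y_{s}}^{2}\Big]+\varepsilon^{-1}\Big(\int_{0}^{T}\abs{g(s,0,0)}\,ds\Big)^{2},
\end{gather*}
absorbing the sup into the left-hand side together with the BDG term. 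The same care is needed when you check that $\xi+\int_{0}^{T}g(s,y_{s},z_{s})\,ds\in L^{2}(\mathcal{F}_{T})$: there you should bound $\big(\int_{0}^{T}\abs{g}\,ds\big)^{2}$ using the triangle inequality and Cauchy--Schwarz on the Lipschitz part only, keeping $\int_{0}^{T}\abs{g(s,0,0)}\,ds$ intact. With that adjustment the argument closes and yields the constant $C=C(T,[g]_{L},d)$ as claimed.
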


An immediate consequence is the following result, which will be crucial in the sequel. See e.g. \cite[Theorem 4.2.2]{ZhangJianfeng2017BSDE}.

\begin{corollary}\label{corollary 1 a priori bounds}
Let $g$ satisfy Assumption \ref{assumption 1} (i)-(ii), and for $i=1,2$, let $\xi_{i}$ satisfy Assumption \ref{assumption 1} (iii). Let $(Y^{i}, Z^{i}) \in \mathbb{S}_{T}^{2}(\mathbb{R}) \times \mathbb{H}_{T}^{2}(\mathbb{R}^{d})$ be the solution to the BSDE with data $(\xi^{i}, g)$. Then there exists a constant $C > 0$, depending only on $T$, $[g]_{L}$ and $d$, such that
    \begin{gather*}
        \norm{Y^{1} - Y^{2}}_{S_{T}^{2}(\mathbb{R})} + \norm{Z^{1} - Z^{2}}_{H_{T}^{2}(\mathbb{R}^{d})} \leq C \norm{\xi^{1}-\xi^{2}}_{L^{2}(\mathcal{F}_{T})}  .
    \end{gather*}
\end{corollary}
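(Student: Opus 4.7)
The plan is to derive the BSDE satisfied by the difference processes and then run a standard Itô energy estimate on it. Set $\delta Y_{t} := Y^{1}_{t} - Y^{2}_{t}$, $\delta Z_{t} := Z^{1}_{t} - Z^{2}_{t}$ and $\delta \xi := \xi^{1} - \xi^{2}$; subtracting the two copies of (\ref{BSDE}) gives
\begin{gather*}
\delta Y_{t} = \delta \xi + \int_{t}^{T} \Delta g_{s}\, ds - \int_{t}^{T} \delta Z_{s} \cdot dB_{s},
\end{gather*}
with $\Delta g_{s} := g(s, Y^{1}_{s}, Z^{1}_{s}) - g(s, Y^{2}_{s}, Z^{2}_{s})$, which by Assumption \ref{assumption 1}(i) obeys $\vert \Delta g_{s} \vert \leq [g]_{L}(\vert \delta Y_{s} \vert + \vert \delta Z_{s} \vert)$.

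Next I would apply Itô's formula to $e^{\lambda t} \vert \delta Y_{t} \vert^{2}$ between $t$ and $T$, take expectations (the stochastic integral against $B$ is a true martingale since $(Y^{i}, Z^{i}) \in \mathbb{S}_{T}^{2} \times \mathbb{H}_{T}^{2}$ by Theorem \ref{theorem 2}), and bound the cross term via the Lipschitz estimate on $\Delta g$ together with Young's inequality, absorbing half of the $\vert \delta Z \vert^{2}$-integral into the left-hand side. Choosing $\lambda$ of the order $[g]_{L} + [g]_{L}^{2}$ then kills the residual $\vert \delta Y \vert^{2}$ contribution on the right-hand side and yields
\begin{gather*}
\sup_{t \in [0,T]} \mathbb{E}\vert \delta Y_{t} \vert^{2} + \mathbb{E}\int_{0}^{T} \vert \delta Z_{s} \vert^{2} ds \leq C\, \mathbb{E}\vert \delta \xi \vert^{2},
\end{gather*}
with $C$ depending only on $T$ and $[g]_{L}$.

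To upgrade the control of $\delta Y$ from $\sup_{t}\mathbb{E}\vert \cdot \vert^{2}$ to the $\mathbb{S}_{T}^{2}$ norm, I would return to the difference BSDE, take the supremum over $t$ and apply the Burkholder--Davis--Gundy inequality to the martingale $\int_{0}^{\cdot} \delta Y_{s}\, \delta Z_{s} \cdot dB_{s}$. The resulting term $C\,\mathbb{E}(\int_{0}^{T} \vert \delta Y_{s} \vert^{2} \vert \delta Z_{s} \vert^{2} ds)^{1/2}$ splits by Young into $\tfrac{1}{2}\mathbb{E}\sup_{t} \vert \delta Y_{t} \vert^{2} + C' \mathbb{E}\int_{0}^{T} \vert \delta Z_{s} \vert^{2} ds$; the first term is absorbed on the left and the second is already controlled by the previous step, producing the desired estimate.

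The main obstacle is essentially one of bookkeeping: tracking the Young and BDG constants so that the final $C$ depends only on $T$, $[g]_{L}$ and $d$, and verifying that the $\mathbb{S}_{T}^{2}$-absorption is legal (which it is, since both $Y^{i}$ already lie in $\mathbb{S}_{T}^{2}$ by Theorem \ref{theorem 2}, hence so does $\delta Y$). A slightly slicker alternative is to introduce $\tilde g(s, y, z) := g(s, y + Y^{2}_{s}, z + Z^{2}_{s}) - g(s, Y^{2}_{s}, Z^{2}_{s})$, observe that $(\delta Y, \delta Z)$ solves the BSDE with data $(\delta \xi, \tilde g)$, that $\tilde g$ is still $[g]_{L}$-Lipschitz in $(y,z)$ and vanishes at $(0,0)$, and then invoke the version of Theorem \ref{theorem 2} for random generators; this avoids rerunning the Itô argument at the price of appealing to a result not explicitly stated in the excerpt.
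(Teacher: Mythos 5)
Your argument is correct and is precisely the standard stability estimate for Lipschitz BSDEs — difference equation, Itô on $e^{\lambda t}|\delta Y_t|^2$ with Young's inequality to absorb the $|\delta Z|^2$ term, then Burkholder--Davis--Gundy to upgrade to the $\mathbb{S}_{T}^{2}$ norm — which is exactly the proof of the result the paper cites (Theorem 4.2.2 in Zhang's book) rather than reproves. No gaps; the martingale/absorption justifications you flag are indeed legitimate since $(\delta Y,\delta Z)\in\mathbb{S}_{T}^{2}\times\mathbb{H}_{T}^{2}$ by Theorem \ref{theorem 2}.
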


Thanks to Theorem \ref{theorem 2}, we have that for a fixed generator $g$, the solution operator (\ref{solution operator}) is well-defined.

\begin{definition}\label{solution operators}
    Let $g$ satisfy Assumption \ref{assumption 1} (i)-(ii). We define the solution operators of the associated BSDE as 
    \begin{gather*}
    \begin{array}{rcccc}
    \mathcal{Y} \colon & L^{2}(\mathcal{F}_{T}) & \to & \mathbb{S}_{T}^{2}(\mathbb{R})  \\
    & \xi & \mapsto & Y^{\xi}
\end{array} , \quad \begin{array}{rcccc}
    \mathcal{Z} \colon & L^{2}(\mathcal{F}_{T}) & \to &\mathbb{H}_{T}^{2}(\mathbb{R}^{d}) \\
    & \xi & \mapsto & Z^{\xi},
\end{array} 
\end{gather*}
where $(Y^{\xi}, Z^{\xi})$ is the solution to the BSDE with data  $(\xi, g)$. For any $t \in [0,T]$, we also use the notation $(\mathcal{Y}_{t}, \mathcal{Z}_{t}): \xi \mapsto  (Y_{t}^{\xi}, Z_{t}^{\xi})$. 
\end{definition}

We stress the relationship between the $g$-conditional expectations defined by (\ref{g expectation}) and the solution operator $\mathcal{Y}$. Indeed, for all $t$ we have that $\mathcal{E}_{t}^{g}(\cdot) = \mathcal{Y}_{t}(\cdot)$.

The a priori estimates given in Corollary \ref{corollary 1 a priori bounds} tell us that the solution operators are globally Lipschitz.

\begin{corollary}\label{corollary lipschitz}
    Let $g$ satisfy Assumption \ref{assumption 1} (i)-(ii). Then the corresponding solution operators are globally Lipschitz.
\end{corollary}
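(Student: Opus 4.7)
The plan is to recognize that Corollary \ref{corollary lipschitz} is essentially a reformulation of Corollary \ref{corollary 1 a priori bounds} in operator language, so the proof reduces to applying that estimate after checking that the operators $\mathcal{Y}, \mathcal{Z}$ are indeed well-defined on all of $L^{2}(\mathcal{F}_{T})$.

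First I would take two arbitrary terminal conditions $\xi^{1}, \xi^{2} \in L^{2}(\mathcal{F}_{T})$; both automatically satisfy Assumption \ref{assumption 1} (iii). Since $g$ satisfies Assumption \ref{assumption 1} (i)--(ii), Theorem \ref{theorem 2} guarantees that the BSDEs with data $(\xi^{i}, g)$ admit unique solutions $(Y^{i}, Z^{i}) \in \mathbb{S}_{T}^{2}(\mathbb{R}) \times \mathbb{H}_{T}^{2}(\mathbb{R}^{d})$ for $i = 1, 2$, so that $\mathcal{Y}(\xi^{i}) = Y^{i}$ and $\mathcal{Z}(\xi^{i}) = Z^{i}$ are well-defined elements of the respective codomains of Definition \ref{solution operators}.

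Next I would invoke Corollary \ref{corollary 1 a priori bounds} directly to obtain
\[
\norm{\mathcal{Y}(\xi^{1}) - \mathcal{Y}(\xi^{2})}_{\mathbb{S}_{T}^{2}(\mathbb{R})} + \norm{\mathcal{Z}(\xi^{1}) - \mathcal{Z}(\xi^{2})}_{\mathbb{H}_{T}^{2}(\mathbb{R}^{d})} \leq C \norm{\xi^{1} - \xi^{2}}_{L^{2}(\mathcal{F}_{T})},
\]
with $C$ depending only on $T$, $[g]_{L}$ and $d$. Dropping either of the two nonnegative terms on the left-hand side yields the Lipschitz bound for $\mathcal{Y}$ and for $\mathcal{Z}$ separately, each with Lipschitz constant at most $C$.

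There is no real obstacle: the heavy lifting, the a priori comparison estimate between two BSDE solutions, has already been performed in Corollary \ref{corollary 1 a priori bounds}. The only point that deserves a brief remark is that the constant there is universal, i.e.\ it does not depend on the choice of $\xi^{1}, \xi^{2}$ but only on the structural data $(T, [g]_{L}, d)$, which is exactly what one needs for a \emph{global} Lipschitz property rather than a merely local one.
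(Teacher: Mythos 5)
Your proof is correct and follows exactly the route the paper intends: the corollary is an immediate consequence of the stability estimate in Corollary \ref{corollary 1 a priori bounds}, applied to two arbitrary terminal conditions, with the key observation that the constant depends only on $(T, [g]_{L}, d)$ and hence is uniform over all of $L^{2}(\mathcal{F}_{T})$. The paper gives no separate proof precisely because the argument is this direct.
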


\subsection{Markovian Forward-Backward SDEs}

A particular case which is often studied in the literature is when the final condition of the BSDE is given by some sufficiently integrable function of the final value of the solution of a Forward SDE,  
\begin{flalign}
    X_{t} &= x + \int_{0}^{t} b(s, X_{s}) ds + \int_{0}^{t} \sigma(s, X_{s}) d B_{s}, \quad x \in \mathbb{R}^{m} \label{FSDE system} \\
    Y_{t} &= f(X_{T}) + \int_{t}^{T} g(s, Y_{s}, Z_{s}) ds - \int_{t}^{T} Z_{s} \cdot dB_{s} \label{BSDE system}.
\end{flalign}
These kind of systems are called decoupled Markovian Forward-Backward Stochastic Differential Equations (FBSDEs). Comparing it with (\ref{BSDE}), here we have that $\xi = f(X_{T})$.

One of the reasons why this case is interesting is because of the following result. See e.g. \cite[Theorem 3.4]{1997Bsde} or \cite[Theorem 5.1.3]{ZhangJianfeng2017BSDE} for the proof. 

\begin{theorem}\label{theorem markovianity}
    Let $b, \sigma \colon [0,T] \times \mathbb{R}^{m} \to \mathbb{R}^{m} \times \mathbb{R}^{m \times d}$ be deterministic maps, uniformly Lipschitz continuous w.r.t. $x$ and such that $b(\cdot, 0)$ $\sigma(\cdot, 0)$ are bounded. 
    
    Let $g$ satisfy Assumption \ref{assumption 1} (i)-(ii), and $f\colon\mathbb{R}^{m} \to \mathbb{R}$ be of polynomial growth. 

    Then there exists a unique solution $(X,Y,Z) \in \mathbb{S}_{T}^{2}(\mathbb{R}^{m}) \times \mathbb{S}_{T}^{2}(\mathbb{R}) \times \mathbb{H}_{T}^{2}(\mathbb{R}^{d})$. Moreover, there exist two measurable maps $u\colon[0,T] \times \mathbb{R}^{m} \to \mathbb{R}$ and $v\colon[0,T] \times \mathbb{R}^{m} \to \mathbb{R}^{d}$ such that 
    \begin{gather*}
        Y_{t} = u(t, X_{t}), \quad Z_{t} = v(t, X_{t}) \quad \text{$dt \otimes \mathbb{P}$-a.e.}
    \end{gather*}
\end{theorem}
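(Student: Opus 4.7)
The plan is to combine the standard well-posedness theory for forward SDEs with Theorem \ref{theorem 2}, and to construct the functions $u,v$ by solving the system with arbitrary initial data $(t,x)$.

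First I would invoke the classical result for SDEs with Lipschitz coefficients and linear growth (which the assumptions on $b,\sigma$ guarantee) to obtain the unique solution $X \in \mathbb{S}_{T}^{2}(\mathbb{R}^{m})$, together with the moment bound $\mathbb{E}\bigl[\sup_{t \in [0,T]} |X_{t}|^{p}\bigr] < \infty$ for every $p \geq 1$. Polynomial growth of $f$ then yields $f(X_{T}) \in L^{2}(\mathcal{F}_{T})$, so Theorem \ref{theorem 2} provides the unique solution $(Y,Z) \in \mathbb{S}_{T}^{2}(\mathbb{R}) \times \mathbb{H}_{T}^{2}(\mathbb{R}^{d})$ and finishes the first claim. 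The remaining work is to produce the Markovian representation.

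Second, for each $(t,x) \in [0,T] \times \mathbb{R}^{m}$ I would consider the forward SDE started at time $t$ with value $x$, $X_{s}^{t,x} = x + \int_{t}^{s} b(r, X_{r}^{t,x}) dr + \int_{t}^{s} \sigma(r, X_{r}^{t,x}) dB_{r}$ for $s \in [t,T]$, and the associated BSDE with terminal condition $f(X_{T}^{t,x})$ and generator $g$, whose solution I denote $(Y^{t,x}, Z^{t,x})$. Since $X_{s}^{t,x}$ depends only on the Brownian increments over $[t,s]$, the random variable $Y_{t}^{t,x}$ is measurable with respect to $\sigma(B_{r} - B_{t} : r \in [t,T])$ and also $\mathcal{F}_{t}$-measurable; by independence of these two $\sigma$-algebras it is deterministic. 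I would then define $u(t,x) \coloneqq Y_{t}^{t,x}$. Measurability of $u$ in $(t,x)$ follows from the a priori estimates of Corollary \ref{corollary 1 a priori bounds} combined with the continuity of $(t,x) \mapsto f(X_{T}^{t,x})$ in $L^{2}$, which can be obtained from standard flow estimates for the forward SDE.

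Third, the flow property $X_{s} = X_{s}^{t, X_{t}}$ for $s \in [t,T]$, which follows from pathwise uniqueness of the forward SDE, is the bridge to the Markovian representation. Conditioning on $\mathcal{F}_{t}$ and using uniqueness of the BSDE (applied $\omega$-wise via a regular conditional probability argument, or equivalently by a standard freezing argument using independence of the increments after time $t$), one gets $Y_{s} = Y_{s}^{t, X_{t}}$ and $Z_{s} = Z_{s}^{t, X_{t}}$ for $s \in [t,T]$, which at $s = t$ yields $Y_{t} = u(t, X_{t})$ almost surely. Since $t$ was arbitrary, this holds $dt \otimes \mathbb{P}$-almost everywhere.

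The main obstacle is handling $Z$: unlike $Y$, the process $Z^{t,x}$ is only defined $ds \otimes \mathbb{P}$-a.e., so one cannot directly set $v(t,x) \coloneqq Z_{t}^{t,x}$. The standard workaround is to define $v$ as the $dt \otimes \mathbb{P}$-density appearing in the martingale representation of $Y$, and to identify it on a full-measure set using the Markov property of the pair $(t, X_{t})$. Concretely, one shows that for a.e.\ $t$ and a Borel set of $x$'s of full $X_{t}$-law-measure the value $Z_{t}^{t,x}$ can be taken as a representative, and then uses the joint measurability of the corresponding mapping — which can be established via the same Picard iteration used to construct the BSDE solution, each iterate being jointly measurable — to pick a measurable version $v\colon [0,T] \times \mathbb{R}^{m} \to \mathbb{R}^{d}$. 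The uniqueness statement in Theorem \ref{theorem 2} then gives $Z_{t} = v(t, X_{t})$ $dt \otimes \mathbb{P}$-a.e., completing the proof.
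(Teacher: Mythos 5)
The paper does not actually prove this theorem: it is quoted from the literature (El Karoui--Peng--Quenez; Zhang, Theorem 5.1.3), so there is no internal proof to compare against. Your sketch follows the standard route of those references: Lipschitz-plus-linear-growth well-posedness and moment bounds for the forward SDE give $f(X_{T}) \in L^{2}(\mathcal{F}_{T})$ and hence, via Theorem \ref{theorem 2}, well-posedness of the backward equation; $u(t,x) \coloneqq Y_{t}^{t,x}$ is deterministic because it is measurable with respect to both $\mathcal{F}_{t}$ and the (augmented) $\sigma$-algebra generated by $\{B_{r}-B_{t}\}_{r \in [t,T]}$, which are independent; the flow property and uniqueness identify $Y_{t} = u(t,X_{t})$; and the $Z$-component is extracted from the martingale representation via Picard iteration. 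All of this is the correct skeleton.

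One step as written would fail under the stated hypotheses. You derive measurability of $u$ from ``continuity of $(t,x) \mapsto f(X_{T}^{t,x})$ in $L^{2}$, obtained from standard flow estimates.'' The theorem assumes only that $f$ is measurable of polynomial growth; the flow estimates give $L^{p}$-continuity of $x \mapsto X_{T}^{t,x}$, not of the composition with $f$. Taking $b \equiv 0$, $\sigma \equiv 0$ and $f = \mathbf{1}_{A}$ gives $f(X_{T}^{t,x}) = f(x)$, which is as discontinuous as $f$ is; the same issue infects any freezing/substitution step for $Y_{s} = Y_{s}^{t,X_{t}}$ that is run through $L^{2}$-continuity in $x$. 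The repair is exactly the mechanism you already invoke for $Z$: run the Picard iteration for both components, use the Markov property of $X$ (which requires only measurability of the integrand, not continuity) to show each iterate has the form $(u_{n}(t,X_{t}), v_{n}(t,X_{t}))$ with $u_{n}, v_{n}$ jointly measurable, and pass to the limit in $L^{2}$ of the pushforward measure of $(t,\omega) \mapsto (t,X_{t}(\omega))$ --- the same device the paper uses later in the proof of Theorem \ref{theorem markovian infinite}. Alternatively, prove the statement first for continuous $f$ of polynomial growth and extend to measurable $f$ by density in $L^{2}$ of the law of $X_{T}$ together with the stability estimate of Corollary \ref{corollary 1 a priori bounds}. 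With that adjustment your argument is complete and coincides with the cited proofs.
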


\begin{remark}
    Recall that $Y \in \mathbb{H}_{T}^{2}(\mathbb{R})$, which together with Theorem \ref{theorem markovianity} implies  
    \begin{gather}\label{Y_U_H2}
       \int_{[0,T] \times \Omega} |u(t, X_{t}(\omega))|^{2} dt \otimes \mathbb{P}(dw) < \infty.
    \end{gather}
    Let us consider the measure space $([0,T] \times \Omega, \mathcal{B}([0,T]) \otimes \mathcal{F}, dt \otimes \mathbb{P})$, and let $\nu$ be the pushforward measure on $([0,T] \times \mathbb{R}^{m}, \mathcal{B}([0,T]) \otimes \mathcal{B}( \mathbb{R}^{m}))$ given by the measurable map 
    \begin{gather*}
        \begin{array}{rcccc}
       \widehat{X} \colon & [0,T] \times \Omega & \to & [0,T] \times \mathbb{R}^{m} & \\
        & (t, \omega) & \mapsto & (t, X_{t}(\omega)).
    \end{array} 
    \end{gather*}
    By the image measure theorem applied to (\ref{Y_U_H2}), one gets 
    \begin{gather*}
        \int_{[0,T] \times \mathbb{R}^{m}} |u(t, x)|^{2} \nu(dt, dx) < \infty.
    \end{gather*}
    We therefore have that $u \in L^{2}([0,T] \times \mathbb{R}^{m}, \mathbb{R}; \nu)$. Similarly, $v \in L^{2}([0,T] \times \mathbb{R}^{m}, \mathbb{R}^{d}; \nu)$.
\end{remark}

\subsection{The backward Euler scheme for BSDEs}\label{subsection b euler}

We now briefly present the backward Euler scheme for BSDEs, which is the most standard method used to numerically approximate the solution to (\ref{BSDE}) for a fixed $\xi$. This can be seen as an analog of the forward Euler scheme. We emphasize that this is always presented for a fixed terminal condition. We follow \cite[Section 5.3.2]{ZhangJianfeng2017BSDE}. 

Consider a time partition of $[0,T]$, $\pi \coloneqq \{0 = t_{0} < \dots < t_{n} = T\}$. Let us write $\Delta t_{i} \coloneqq t_{i+1}-t_{i}$ and $\Delta B_{i} \coloneqq B_{t_{i+1}} - B_{t_{i}}$. Then, in order to motivate the scheme, notice that
\begin{flalign}
    Y_{t_{i}} &= Y_{t_{i+1}} + \int_{t_{i}}^{t_{i+1}} g(t, Y_{t}, Z_{t}) dt - \int_{t_{i}}^{t_{i+1}} Z_{t} \cdot dB_{t} \notag \\
    & \approx Y_{t_{i+1}} + \Delta t_{i} g(t_{i}, Y_{t_{i}}, Z_{t_{i}}) - Z_{t_{i}} \cdot \Delta B_{i}. \label{bsde euler}
\end{flalign}
By taking conditional expectations with respect to $\mathcal{F}_{t_{i}}$ we get 
\begin{gather*}
    Y_{t_{i}} \approx \mathbb{E}_{t_{i}} \big[ Y_{t_{i+1}} \big] + \Delta t_{i} g(t_{i}, Y_{t_{i}}, Z_{t_{i}}).
\end{gather*}
Multiplying by $\Delta B_{t_{i}}$, taking conditional expectations with respect to $\mathcal{F}_{t_{i}}$ and using Itô's isometry, yields 
\begin{gather*}
    Z_{t_{i}} \approx \frac{1}{\Delta t_{i}} \mathbb{E}_{t_{i}} \Big[ Y_{t_{i+1}} \Delta B_{i} \Big].
\end{gather*}
This motivates the numerical scheme given by $Y_{n}^{\pi} \coloneqq \xi$ and, for $i=n-1, \dots, 0$,
\begin{gather}\label{backward sde euler}
    Z_{i}^{\pi} \coloneqq \frac{1}{\Delta t_{i}} \mathbb{E}_{t_{i}} \Big[ Y_{i+1}^{\pi} \Delta B_{i} \Big], \quad Y_{i}^{\pi} \coloneqq \mathbb{E}_{t_{i}} \big[ Y_{i+1}^{\pi} \big] + \Delta t_{i} g(t_{i}, Y_{i}^{\pi}, Z_{i}^{\pi}).
\end{gather}
In case $\xi$ cannot be perfectly simulated, one replaces it with an approximation.
\begin{remark}
    The scheme given by (\ref{backward sde euler}) is called \textit{implicit scheme}, because at each time step, $Y_{i}^{\pi}$ is defined implicitly through an equation. Since the mapping $y \mapsto \Delta t_{i} g(t,y,z)$ has Lipschitz constant $\Delta t_{i} [g]_{L}$, we have that for a sufficiently fine grid, the second equation in (\ref{backward sde euler}) has a unique solution, and therefore $Y_{i}^{\pi}$ is well defined.

    An alternative to the implicit scheme is the explicit scheme, defined by replacing the second equation with
    \begin{gather*}
    Y_{i}^{\pi} \coloneqq \mathbb{E}_{t_{i}} \Big[ Y_{i+1}^{\pi} + \Delta t_{i} g(t_{i}, Y_{i+1}^{\pi}, Z_{i}^{\pi}) \Big].
\end{gather*}
\end{remark}
In the sequel we always use the implicit scheme, although the results that we present can be proved in the same way for the explicit one.

\subsubsection{Implementation of the backward Euler scheme for BSDEs}

The keystone in the implementation of this scheme is the computation of the conditional expectations appearing in each time step. In the case of a Markovian FBSDE system, there is an analog result to Theorem \ref{theorem markovianity} for $(Y_{i}^{\pi}, Z_{i}^{\pi})_{0 \leq i \leq n}$, which shows that for each $i \in \{0, \dots, n\}$, there exist measurable transformations $u_{i}, v_{i}\colon \mathbb{R}^{m} \to \mathbb{R} \times \mathbb{R}^{d}$ such that 
\begin{gather*}
    Y_{i}^{\pi} = u_{i}(X_{i}^{\pi}), \quad Z_{i}^{\pi} = v_{i}(X_{i}^{\pi}),
\end{gather*}
where $X^{\pi}$ denotes some approximation of the forward diffusion. Moreover, we have that $(u_{i}, v_{i})\in L^{2}(\mathbb{R}^{m}, \mathbb{R} ; \nu_i) \times L^{2}(\mathbb{R}^{m}, \mathbb{R}^d ; \nu_i)$, where $\nu_i$ is the law of $X_{i}^{\pi}$. 

This result is easily proved by backward induction. We emphasize that the Markov property of the forward process and the fact that the final condition of the BSDE only depends on the final value of the SDE are crucial elements here.

By the $L^{2}$-projection rule of conditional expectations, we can then write 
\begin{flalign*}
     u_{i} &\coloneqq \argmin_{\Tilde{u} \in L^{2}(\nu_i)} \mathbb{E}  \Big| \Tilde{u}(X_{i}^{\pi}) - Y_{i+1}^{\pi} - \Delta t_{i} g(t_{i}, \Tilde{u}(X_{i}^{\pi}), Z_{i}^{\pi}) \Big|^{2}, \\
    v_{i} &\coloneqq \argmin_{\Tilde{v} \in L^{2}(\nu_i)} \mathbb{E}  \Big| \Tilde{v}(X_{i}^{\pi}) - \frac{1}{\Delta t_{i}} Y_{i+1}^{\pi} \Delta B_{i} \Big|^{2}.
\end{flalign*}
In practice, one approximates the solution to such infinite-dimensional optimization problems by solving it over a finite-dimensional subspace of $L^{2}(\nu_i)$. The most classical approach consists in considering the linear span given by a truncated orthonormal basis of $L^{2}(\nu_i)$, see e.g. \cite{BouchardBruno2004DaaM}. More recently, it has been proposed to use neural networks, which are particularly suitable when the dimension of the forward diffusion is high, see e.g. \cite{HureCome2020Dbsf}.

\section{Analysis of the solution operators: a Wiener chaos decomposition approach}\label{section 3}

Through the rest of the paper we fix a generator $g$ satisfying Assumption \ref{assumption 1} (i)-(ii). With the computation of conditional $g$-expectations in mind, we are interested in studying the solution operators
$\mathcal{Y}\colon \xi \mapsto Y^{\xi}$ and $\mathcal{Z}\colon \xi \mapsto Z^{\xi}$. 

The main goal of this section is to show that the solution operators evaluated at any time $t \in [0,T]$, which recall are defined as $\mathcal{Y}_{t}(\xi) \coloneqq Y_{t}^{\xi}$ and $\mathcal{Z}_{t}(\xi) \coloneqq Z_{t}^{\xi}$, can be written as a measurable transformation of $(t, X_{t}, \xi)$, where $X = (X_{t})_{t \in [0,T]}$ satisfies an infinite-dimensional SDE that relies on the Wiener chaos decomposition. In order to do so, we will frame the problem as the solution of an infinite-dimensional Markovian Forward-Backward SDE system with parametric final condition.

\subsection{The Wiener chaos decomposition}

We briefly recall the definition of the Wiener chaos decomposition of $L^{2}(\mathcal{F}_{T})$. We refer to \cite[Section 1.1]{NualartDavidTMCa} or \cite[Chapter 1]{DiNunnoGiulia2009MCfL} for more details on this topic. Let $H_{n}(x)$ denote the $n$th Hermite polynomial, which is defined by 
\begin{gather*}
    H_{n}(x) = \frac{(-1)^{n}}{n!} e^{\frac{x^{2}}{2}} \frac{d^{n}}{dx^{n}}\big( e^{\frac{-x^{2}}{2}} \big), \quad n \geq 1,
\end{gather*}
and $H_{0}(x) = 1$. These polynomials are the coefficients of the decomposition in powers of $t$ of the function $F(x,t) = \exp\big(tx - \frac{t^{2}}{2} \big)$. In fact, we have
\begin{flalign*}
    F(x,t) = \exp \Big( \frac{x^{2}}{2} - \frac{1}{2}(x-t)^{2} \Big)
    &= e^{\frac{x^{2}}{2}} \sum_{n=0}^{\infty} \frac{t^{n}}{n!} \big( \frac{d^{n}}{dt^{n}}e^{-\frac{(x-t)^{2}}{2}} \big)\big\vert_{t=0}
    = \sum_{n=0}^{\infty} t^{n} H_{n}(x).
\end{flalign*}
Furthermore, since $\partial_{x} F(x,t) = t \exp \Big( \frac{x^{2}}{2} - \frac{1}{2}(x-t)^{2} \Big)$, one has that 
\begin{flalign}
    H_{n}'(x) &= H_{n-1}(x), \quad n \geq 0,\label{derivative hermite}
\end{flalign}
with the convention that $H_{-1}(x) \equiv 0$. 

Let $\mathcal{A}$ denote the set of all sequences $a = (a_{1}, a_{2}, \dots)$, $a_{i} \in \mathbb{N} \cup \{0\}$, such that all the terms, except a finite number of them, vanish. For $a \in \mathcal{A}$, we set $a! = \prod_{i \geq 1} a_{i}!$ and $|a| = \sum_{i \geq 1} a_{i}$, and define 
\begin{gather*}
    \Phi_{a} = \sqrt{a!} \prod_{i=1}^{\infty} H_{a_{i}} \Big(\int_{0}^{T} h_{i}(s) \cdot dB_{s} \Big),
\end{gather*}
where $(h_{i})_{i \geq 1}$ is an orthonormal basis of $L^{2}([0,T]; \mathbb{R}^{d})$. Notice that the above product is well defined because $H_{0}(x) = 1$ and $a_{i} \neq 0$ only for a finite number of indices. The set $\{ \Phi_{a}, |a| = k\}$ is called the $k$th level of the Wiener chaos.

We then have the following result, called the Wiener chaos decomposition of $L^{2}(\mathcal{F}_{T})$. See \cite[Proposition 1.1.1]{NualartDavidTMCa} for the proof.
\begin{proposition}
    The collection $\{ \Phi_{a}, a \in \mathcal{A} \}$ is a complete orthonormal system in $L^{2}(\mathcal{F}_{T})$. That is, for any $\xi \in L^{2}(\mathcal{F}_{T})$ we have the decomposition 
    \begin{gather*}
        \xi  = \sum_{k \geq 0} \sum_{|a|=k} d_{a} \prod_{i \geq 1} H_{a_{i}}\Big( \int_{0}^{T} h_{i}(s) \cdot dB_{s} \Big), \quad d_{a} \coloneqq  a! \mathbb{E} \Big[ \xi \times \prod_{i \geq 1} H_{a_{i}} \Big( \int_{0}^{T} h_{i}(s) \cdot dB_{s} \Big) \Big].
    \end{gather*}
\end{proposition}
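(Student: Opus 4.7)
The plan is to verify orthonormality and completeness of $\{\Phi_a\}$ separately, both resting on the generating-function identity $\sum_{n \geq 0} t^n H_n(x) = \exp(tx - t^2/2)$ recorded just above and on the observation that the Wiener integrals $X_i := \int_0^T h_i(s) \cdot dB_s$ form a sequence of independent $\mathcal{N}(0,1)$ variables (since by Itô isometry $\mathbb{E}[X_i X_j] = \langle h_i, h_j \rangle_{L^2} = \delta_{ij}$ and the $X_i$ are jointly Gaussian).

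For orthonormality, I would first derive the one-dimensional identity $\mathbb{E}[H_m(X) H_n(X)] = \delta_{mn}/n!$ for $X \sim \mathcal{N}(0,1)$ by multiplying two generating series, taking expectation to obtain $\mathbb{E}[e^{(s+t)X - (s^2+t^2)/2}] = e^{st}$, and matching coefficients in $s^m t^n$. Combining this identity with the product structure of $\Phi_a$ and independence of the $X_i$ then yields
\[
\mathbb{E}[\Phi_a \Phi_b] = \sqrt{a! \, b!} \prod_{i \geq 1} \mathbb{E}[H_{a_i}(X_i) H_{b_i}(X_i)] = \sqrt{a! \, b!} \prod_{i \geq 1} \frac{\delta_{a_i, b_i}}{a_i!} = \delta_{a,b}.
\]

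For completeness, the strategy is to show that every stochastic exponential $\mathscr{E}(h) := \exp(\int_0^T h \cdot dB - \tfrac{1}{2}\|h\|_{L^2}^2)$, for $h \in L^2([0,T]; \mathbb{R}^d)$, lies in the closed linear span of $\{\Phi_a\}$. For a finite linear combination $h = \sum_{i=1}^N t_i h_i$, expanding each factor via the generating-function identity gives
\[
\mathscr{E}(h) = \prod_{i=1}^N \exp\!\bigl(t_i X_i - \tfrac{t_i^2}{2}\bigr) = \prod_{i=1}^N \sum_{n \geq 0} t_i^n H_n(X_i) = \sum_{a : \, a_i = 0 \,\forall i > N} \frac{t^a}{\sqrt{a!}} \, \Phi_a,
\]
with $t^a := \prod_i t_i^{a_i}$; the $L^2$-convergence and the interchange of sum and product are justified by Parseval, since the resulting series has squared norm $\sum_a (t^a)^2 / a! = \exp(\sum_i t_i^2)$, matching the direct computation $\mathbb{E}[\mathscr{E}(h)^2] = \exp(\|h\|^2)$. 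A density argument, leveraging continuity of $h \mapsto \mathscr{E}(h)$ from $L^2([0,T]; \mathbb{R}^d)$ into $L^2(\mathcal{F}_T)$ together with density of finite combinations $\sum t_i h_i$ in $L^2([0,T]; \mathbb{R}^d)$, extends the conclusion to arbitrary $h$.

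The main analytic obstacle is the auxiliary input that the linear span of $\{\mathscr{E}(h) : h \in L^2([0,T]; \mathbb{R}^d)\}$ is itself dense in $L^2(\mathcal{F}_T)$. The standard argument goes by contradiction: a $\zeta \in L^2(\mathcal{F}_T)$ orthogonal to every $\mathscr{E}(h)$ would, upon restricting $h$ to step functions adapted to any finite grid $t_1 < \dots < t_k$, force the Fourier transform of the signed measure $\mathbb{E}[\zeta \mid B_{t_1}, \dots, B_{t_k}] \, d\mathbb{P}$ to vanish, hence $\mathbb{E}[\zeta \mid B_{t_1}, \dots, B_{t_k}] = 0$; a monotone-class argument using that cylindrical events generate $\mathcal{F}_T$ then forces $\zeta = 0$. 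Everything else in the proof reduces to algebraic manipulation of Hermite polynomials.
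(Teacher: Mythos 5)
Your proposal is correct and is essentially the standard proof of this classical result (orthonormality via the generating-function identity and independence of the Wiener integrals $\int_0^T h_i\cdot dB$, completeness via density of stochastic exponentials), which is exactly the argument in the reference the paper defers to for this proposition (Nualart, Proposition 1.1.1). The only step you gloss over is the passage from $\mathbb{E}\bigl[\zeta\, e^{\sum \lambda_j\cdot(B_{t_j}-B_{t_{j-1}})}\bigr]=0$ for real $\lambda$ to the vanishing of the Fourier transform, which requires a short analytic-continuation argument in $\lambda$; this is standard and does not constitute a gap.
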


\subsection{An infinite-dimensional Forward-Backward SDE}

In our approach, we use the chaos decomposition to write the solution of a BSDE with fixed data $(\xi, g)$ as the solution of a FBSDE system with infinite-dimensional forward SDE. 

Consider the process $X = (X_{t})_{t \in [0,T]}$, taking values in $\mathbb{R}^{\mathcal{A}}$, which is defined coordinate-wise by 
\begin{gather}\label{infinite stochastic process hermite}
    X_{t}^{(a_{1}, a_{2}, \dots)} \coloneqq \prod_{i \geq 1} H_{a_{i}}  \Big( \int_{0}^{t} h_{i}(s) \cdot dB_{s} \Big),
\end{gather}
for any $a=(a_{1}, a_{2}, \dots) \in \mathcal{A}$. 
\begin{proposition}\label{proposition 2}
    The process $X = (X_{t})_{t \in [0,T]}$ satisfies the infinite-dimensional linear SDE
 \begin{gather*}
     dX_{t} = b(t,X_{t})dt + \sigma(t, X_{t}) dB_{t},
 \end{gather*}
 where $b\colon[0,T] \times \mathbb{R}^{\mathcal{A}} \to \mathbb{R}^{\mathcal{A}}$ and $\sigma\colon[0,T] \times \mathbb{R}^{\mathcal{A}} \to \mathbb{R}^{\mathcal{A} \times d}$ are given by 
 \begin{flalign*}
     b(t,x)^{(a_{1}, a_{2}, \dots )} &= \frac{1}{2} \sum_{i \geq 1} x^{(a_{1}, \dots, a_{i}-2, \dots)} |h_{i}(t)|_{\mathbb{R}^{d}}^{2}  \\ & \qquad + \sum_{1 \leq j < i} x^{(a_{1}, \dots, a_{j}-1, \dots, a_{i}-1, \dots)} \scalar{h_{j}(t)}{h_{i}(t)}_{\mathbb{R}^{d}} , \\
     \sigma(t,x)^{(a_{1}, a_{2}, \dots )} &= \sum_{i \geq 1} x^{(a_{1}, \dots, a_{i}-1, \dots)} h_{i}(t),
 \end{flalign*}
 where we use the convention $x^{a} \equiv 0$ whenever $a_{i} < 0$ for some $i \in \mathbb{N}$. The initial condition is determined by the fact that $H_{n}(0) = 0$ whenever $n$ is odd and $H_{2k} (0) = \frac{(-1)^{k}}{2^{k} k!}$ for all $k \geq 1$. 
\end{proposition}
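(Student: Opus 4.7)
The plan is to verify the claimed SDE coordinate by coordinate, which is natural because each multi-index $a \in \mathcal{A}$ has finite support. Fix $a = (a_{1}, a_{2}, \dots) \in \mathcal{A}$ and pick $N$ large enough that $a_{i} = 0$ for all $i > N$. Introduce the one-dimensional Wiener integrals $W_{t}^{i} \coloneqq \int_{0}^{t} h_{i}(s) \cdot dB_{s}$, which are continuous local martingales satisfying $dW_{t}^{i} = h_{i}(t) \cdot dB_{t}$ with cross-variation $d\langle W^{i}, W^{j} \rangle_{t} = \scalar{h_{i}(t)}{h_{j}(t)}_{\mathbb{R}^{d}} dt$. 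Then
\begin{gather*}
    X_{t}^{a} = \phi(W_{t}^{1}, \dots, W_{t}^{N}), \qquad \phi(w_{1}, \dots, w_{N}) \coloneqq \prod_{i=1}^{N} H_{a_{i}}(w_{i}),
\end{gather*}
so the claim reduces to a standard multidimensional Itô computation.

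Next I would apply Itô's formula to $\phi$. Using the Hermite derivative identity (\ref{derivative hermite}) together with the convention $H_{-1} \equiv 0$, the first-order partials are
\begin{gather*}
    \partial_{i} \phi(W_{t}^{1}, \dots, W_{t}^{N}) = H_{a_{i}-1}(W_{t}^{i}) \prod_{k \neq i} H_{a_{k}}(W_{t}^{k}) = X_{t}^{(a_{1}, \dots, a_{i}-1, \dots)},
\end{gather*}
and the second-order partials decompose into a diagonal part $\partial_{i}^{2}\phi = X_{t}^{(a_{1}, \dots, a_{i}-2, \dots)}$ and off-diagonal parts $\partial_{i}\partial_{j}\phi = X_{t}^{(a_{1}, \dots, a_{i}-1, \dots, a_{j}-1, \dots)}$ for $i \neq j$. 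The convention $x^{a} \equiv 0$ when any $a_{i} < 0$ consistently absorbs the cases $a_{i} \in \{0,1\}$, so the formulas are well-defined.

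Substituting these expressions into Itô's formula yields
\begin{gather*}
    dX_{t}^{a} = \Bigg[ \frac{1}{2} \sum_{i} X_{t}^{(\dots, a_{i}-2, \dots)} |h_{i}(t)|_{\mathbb{R}^{d}}^{2} + \frac{1}{2} \sum_{i \neq j} X_{t}^{(\dots, a_{i}-1, \dots, a_{j}-1, \dots)} \scalar{h_{i}(t)}{h_{j}(t)}_{\mathbb{R}^{d}} \Bigg] dt + \Bigg[ \sum_{i} X_{t}^{(\dots, a_{i}-1, \dots)} h_{i}(t) \Bigg] \cdot dB_{t},
\end{gather*}
and folding the symmetric off-diagonal double sum into $\sum_{j<i}$ (absorbing the factor $1/2$) recovers exactly the announced coefficients $b(t,X_{t})^{a}$ and $\sigma(t,X_{t})^{a}$. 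All sums are in fact finite because of the support assumption on $a$, so no convergence issue arises in this coordinate-wise interpretation.

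Finally, the initial condition $X_{0}^{a} = \prod_{i \geq 1} H_{a_{i}}(0)$ follows from expanding the generating function $F(0, t) = \exp(-t^{2}/2) = \sum_{k \geq 0} \frac{(-1)^{k}}{2^{k} k!} t^{2k}$ and matching coefficients with $\sum_{n \geq 0} t^{n} H_{n}(0)$, which gives $H_{2k+1}(0) = 0$ and $H_{2k}(0) = \frac{(-1)^{k}}{2^{k} k!}$ as stated. There is no genuine analytic obstacle here: the only delicate point is the bookkeeping that turns the mixed partials and the Hermite recursion into the drift formula with the $j<i$ convention, and the observation that the coordinate-wise SDE makes sense on $\mathbb{R}^{\mathcal{A}}$ without needing an infinite-dimensional solution theory, because each coordinate only couples to coordinates of strictly smaller total degree $|a|$.
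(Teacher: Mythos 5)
Your proof is correct and follows essentially the same route as the paper: both are an It\^o computation on the product $\prod_i H_{a_i}(\int_0^t h_i \cdot dB)$ driven by the identity $H_n' = H_{n-1}$, with your single application of the multidimensional It\^o formula being just a repackaging of the paper's semimartingale product rule followed by one-dimensional It\^o on each factor. The bookkeeping of the mixed partials, the folding of the symmetric off-diagonal sum into $\sum_{j<i}$, and the derivation of $H_n(0)$ from $F(0,t)=e^{-t^2/2}$ are all accurate.
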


\begin{proof}
Using the integration by parts formula we have that 
\begin{flalign*}
    & dX_{t}^{(a_{1}, a_{2}, \dots)} = \sum_{i \geq 1}  \prod_{\substack{j \geq 1 \\ j \neq i}} H_{a_{j}} \Big( \int_{0}^{t} h_{j}(s) \cdot dB_{s} \Big) \times d \Big( H_{a_{i}}  \Big( \int_{0}^{t} h_{i}(s) \cdot dB_{s} \Big) \Big)  \\
    & + \sum_{1 \leq j < i} \prod_{l \neq i,j} H_{a_{l}}\Big( \int_{0}^{t} h_{l}(s) \cdot dB_{s} \Big) \times d \Big\langle H_{a_{j}}  \Big( \int_{0}^{\cdot} h_{j}(s) \cdot dB_{s} \Big), H_{a_{i}}  \Big( \int_{0}^{\cdot} h_{i}(s) \cdot dB_{s} \Big) \Big\rangle_{t} .
\end{flalign*}
Let us now find the Itô representation of $H_{a_{i}}  \Big( \int_{0}^{t} h_{i}(s) \cdot dB_{s}\Big)$. Applying Itô's formula together with property (\ref{derivative hermite}), we arrive at 
\begin{flalign*}
    d \Big( H_{a_{i}}  \Big( \int_{0}^{t} h_{i}(s) \cdot dB_{s}\Big) \Big) = & H_{a_{i}-1} \Big(  \int_{0}^{t} h_{i}(s) \cdot dB_{s}\Big) \times h_{i}(t) \cdot dB_{t} \\ & + \frac{1}{2} H_{a_{i}-2}\Big(  \int_{0}^{t} h_{i}(s) \cdot dB_{s}\Big) \times |h_{i}(t)|_{\mathbb{R}^{d}}^{2} dt,
\end{flalign*}
where $|\cdot|_{\mathbb{R}^{d}}$ denotes the Euclidean norm in $\mathbb{R}^{d}$, and we use the convention $H_{-2} \equiv 0$. This means that the covariation part can be written as 
\begin{flalign*}
    d \Big\langle H_{a_{j}} &  \Big( \int_{0}^{\cdot} h_{j}(s) \cdot dB_{s} \Big), H_{a_{i}}  \Big( \int_{0}^{\cdot} h_{i}(s) \cdot dB_{s} \Big) \Big\rangle_{t} \\ &= H_{a_{j}-1} \Big(  \int_{0}^{t} h_{j}(s) \cdot dB_{s}\Big)  H_{a_{i}-1} \Big(  \int_{0}^{t} h_{i}(s) \cdot dB_{s}\Big) \scalar{h_{j}(t)}{h_{i}(t)}_{\mathbb{R}^{d}} dt.
\end{flalign*}
Putting everything together, we get
\begin{flalign*}
    & dX_{t}^{(a_{1}, a_{2}, \dots)} = \sum_{i \geq 1}  \Bigg\{ \prod_{\substack{j \geq 1 \\ j \neq i}} H_{a_{j}} \Big( \int_{0}^{t} h_{j}(s) \cdot dB_{s} \Big) \bigg[ H_{a_{i}-1} \Big(  \int_{0}^{t} h_{i}(s) \cdot dB_{s}\Big) \times h_{i}(t) \cdot dB_{t} \\ & + \frac{1}{2} H_{a_{i}-2}\Big(  \int_{0}^{t} h_{i}(s) \cdot dB_{s}\Big) \ |h_{i}(t)|_{\mathbb{R}^{d}}^{2} dt \bigg]\Bigg\} + \sum_{1 \leq j < i} \Bigg\{ \prod_{l \neq i,j} H_{a_{l}}\Big( \int_{0}^{t} h_{l}(s) \cdot dB_{s} \Big) \ \\
    & H_{a_{j}-1} \Big(  \int_{0}^{t} h_{j}(s) \cdot dB_{s}\Big) \ H_{a_{i}-1} \Big(  \int_{0}^{t} h_{i}(s) \cdot dB_{s}\Big) \scalar{h_{j}(t)}{h_{i}(t)}_{\mathbb{R}^{d}} dt \Bigg\} \\
    &= \sum_{i \geq 1} \Big( X_{t}^{(a_{1}, \dots, a_{i}-1, \dots)} h_{i}(t) \cdot dB_{t} + \frac{1}{2} X_{t}^{(a_{1}, \dots, a_{i}-2, \dots)} |h_{i}(t)|_{\mathbb{R}^{d}}^{2} dt \Big) \\ & + \sum_{1 \leq j < i} X_{t}^{(a_{1}, \dots, a_{i}-1, \dots, a_{j}-1, \dots)} \scalar{h_{j}(t)}{h_{i}(t)}_{\mathbb{R}^{d}} dt.
\end{flalign*}
\end{proof}

\begin{remark}\label{remark fd sde}
Consider the stochastic process $X^{\leq p}$ obtained by considering only the first $p$ levels of $(X^{a})_{a \in \mathcal{A}}$, that is $X^{\leq p} \coloneqq \{ X^{a}, |a| \leq p\}$. Then it is easy to see that $X^{\leq p}$ satisfies an SDE system with deterministic coefficients. This is because the $k$th level of the vector fields $b$ and $\sigma$ only depends on the lower levels (in particular, it depends on the $(k-1)$th and $(k-2)$th). 

This is similar to the infinite-dimensional SDE that the signature of a process satisfies, given by $dS(X)_{t} = S(X)_{t} \otimes dX_{t}$. See \cite{LyonsTerryJ2007DEDb} for its precise definition.

If we further consider only the dimensions of $X^{\leq p}$ such that $a_{m} = 0$ for all $m > M$, for some $M \in \mathbb{N}$, that is $X^{\leq p, M} \coloneqq \{ X^{a}, |a| \leq p, a_{k} = 0 \ \forall k>M\}$, then the process satisfies a classical finite-dimensional SDE with deterministic coefficients.
\end{remark}

We can then connect the solution of the BSDE  
\begin{gather*}
    Y_{t} = \xi + \int_{t}^{T} g(s, Y_{s}, Z_{s} ) ds - \int_{t}^{T} Z_{s} \cdot dB_{s}
\end{gather*}
to the solution of the FBSDE system
\begin{flalign}
    X_{t} &= x_{0} + \int_{0}^{t} b(s,X_{s}) ds + \int_{0}^{t} \sigma(s, X_{s}) dB_{s} \label{forward sde infinite dimensional}\\
    Y_{t} &= \sum_{k \geq 0} \sum_{|a|=k} d_{a} X_{T}^{a} + \int_{t}^{T} g(s, Y_{s}, Z_{s} ) ds - \int_{t}^{T} Z_{s} \cdot dB_{s} \label{bsde infinite dimensional}
\end{flalign}
via the chaos decomposition $\xi = \sum_{k \geq 0} \sum_{|a|=k} d_{a} X_{T}^{a} \in L^{2}(\mathcal{F}_{T})$ and Proposition \ref{proposition 2}. That is, a FBSDE system where the forward SDE is infinite-dimensional and where the terminal condition of the BSDE only depends on the final value of the forward SDE. 

We conclude this subsection by recalling the most natural way of using the chaos decomposition to project $L^{2}(\mathcal{F}_{T})$ into a finite-dimensional subspace. This method will also be used in the implementation of the numerical method, see Section \ref{section finite-dimensional approximation}.

Let $p, M\in \mathbb{N}$. For $\xi \in L^{2}(\mathcal{F}_{T})$, we define the projection 
\begin{gather}\label{truncation of the chaos decomposition}
    \Pi_{p, M}(\xi) \coloneqq \sum_{k=0}^{p} \sum_{|a|=k} d_{a} X_{T}^{a}, \quad a = (a_{1}, \dots, a_{M}).
\end{gather}
Recall that each $a = (a_{1}, \dots, a_{M})$ with $|a| = k$ means that $\sum_{j=1}^{M} a_{j} = k$. The dimension of the finite-dimensional subspace $\Pi_{p,M}(L^{2}(\mathcal{F}_{T}))$ can be easily obtained by using known combinatorial results, see (\ref{combinatorics cardinality}). Here we have that $p$ denotes the maximum order of the chaos decomposition, and $M \in \mathbb{N}$ the number of elements in a truncated basis of $L^{2}([0,T]; \mathbb{R}^{d})$. 

\begin{remark}
    By Remark \ref{remark fd sde}, when we restrict the terminal condition $\xi$ to belong to $\Pi_{p,M}(L^{2}(\mathcal{F}_{T}))$, the infinite-dimensional FBSDE (\ref{forward sde infinite dimensional})-(\ref{bsde infinite dimensional}) is clearly equivalent to a finite-dimensional one, since the terminal condition only depends on a finite number of dimensions of the forward SDE. 
\end{remark}

\subsection{Markovian property of the FBSDE system}\label{A proof of the Markovianity of the previous FBSDE system}

We equip $\mathbb{R}^{\mathcal{A}}$ with the standard product $\sigma$-algebra $\mathcal{B}(\mathbb{R}^{\mathcal{A}}) \coloneqq \bigotimes_{a \in \mathcal{A}} \mathcal{B}(\mathbb{R})$. For any $p, M \in \mathbb{N}$, we denote by $\pi_{p,M}$ the projections
\begin{gather*}
    \begin{array}{rcccc}
   \pi_{p,M} \colon & \mathbb{R}^{\mathcal{A}} & \to & \mathbb{R}^{\mathcal{A}_{p,M}} & \\
    & x & \mapsto & (x^{a})_{a \in \mathcal{A}_{p,M}},
    \end{array} 
\end{gather*}
where $\mathcal{A}_{p,M} \coloneqq \{ a \in \mathcal{A} \ : \ |a| \leq p, a_{k} = 0 \ \forall k > M \}$. Notice that this is a set of finite cardinality, and therefore $\mathbb{R}^{\mathcal{A}_{p,M}}$ has finite dimension.

For every $s, t \in [0,T]$ with $s \leq t$, let $W_{s,t} = C([s,t]; \mathbb{R}^{d})$ be the space of all continuous functions defined on $[s,t]$ with values in $\mathbb{R}^{d}$. We equip $W_{s,t}$ with the uniform norm over $[s,t]$. We then denote by $\mathcal{B}(W_{s,t})$ the corresponding Borelian $\sigma$-algebra.

The following lemma proves that $X$ has the Markov property. This will be useful in the sequel. 

\begin{lemma}\label{markov property X infinite dimensional}
    For every $s, t \in [0,T]$ with $s \leq t$, there exists a $\mathcal{B}(\mathbb{R}^{\mathcal{A}}) \otimes \mathcal{B}(W_{s,t}) / \mathcal{B}(\mathbb{R}^{\mathcal{A}})$-measurable map $\varphi \colon \mathbb{R}^{\mathcal{A}} \times W_{s,t} \to \mathbb{R}^{\mathcal{A}}$ such that 
    \begin{gather*}
        X_{t} = \varphi( X_{s}, \{B_{r}-B_{s}\}_{r \in [s,t]}) \quad \text{$\mathbb{P}$-a.s.}
    \end{gather*}
    Consequently, the process $X = (X_{t})_{t \in [0,T]}$ has the Markov property.
\end{lemma}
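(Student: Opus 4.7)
The plan is to write each coordinate $X_t^a$ as a polynomial function of finitely many coordinates of $X_s$ together with finitely many stochastic integrals of deterministic functions over $[s,t]$. The two key ingredients are the Hermite addition formula and the fact that the stochastic integral of a deterministic integrand against Brownian motion admits a version that is a measurable functional of the Brownian path.

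From the generating function identity $e^{t(x+y)-t^2/2} = e^{ty}\cdot e^{tx-t^2/2}$ one derives $H_n(x+y) = \sum_{k=0}^{n} \frac{y^{n-k}}{(n-k)!} H_k(x)$. Applying this with $x = \int_0^s h_i \cdot dB_r$ and $y = \int_s^t h_i \cdot dB_r$, and noting that $H_1(z) = z$ implies $X_s^{e_i} = \int_0^s h_i \cdot dB_r$ (where $e_i \in \mathcal{A}$ denotes the sequence with a $1$ in position $i$ and zeros elsewhere), one obtains for every $a \in \mathcal{A}$
\begin{gather*}
X_t^a = \prod_{i \,:\, a_i > 0} \sum_{k=0}^{a_i} \frac{\bigl(\int_s^t h_i \cdot dB_r\bigr)^{a_i-k}}{(a_i-k)!} H_k\bigl(X_s^{e_i}\bigr).
\end{gather*}
Because $a$ has finite support, only finitely many factors appear, so the right-hand side is a polynomial in the finitely many variables $\{X_s^{e_i}\}$ and $\{\int_s^t h_i \cdot dB_r\}$ for $i$ with $a_i > 0$.

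Next, one shows that each integral $\int_s^t h_i(r) \cdot dB_r$ admits a $\mathcal{B}(W_{s,t})$-measurable version $\psi_i$ with $\int_s^t h_i \cdot dB_r = \psi_i(\{B_r - B_s\}_{r \in [s,t]})$ almost surely; this is the standard fact that the $L^2$-limit of Riemann sums in the increments can be realized as a path-measurable functional. Substituting these $\psi_i$ into the polynomial representation above gives, for each $a$, a measurable map $\varphi_a \colon \mathbb{R}^{\mathcal{A}} \times W_{s,t} \to \mathbb{R}$ with $X_t^a = \varphi_a(X_s, \{B_r - B_s\}_{r \in [s,t]})$ a.s. Joint measurability of the assembled map $\varphi = (\varphi_a)_{a \in \mathcal{A}}$ with values in $\mathbb{R}^{\mathcal{A}}$ then follows because $\mathcal{B}(\mathbb{R}^{\mathcal{A}})$ is generated by the coordinate projections, and each $\varphi_a$ depends on only finitely many coordinates of its first argument.

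Finally, the Markov property is a consequence of this representation together with the independence of $\{B_r - B_s\}_{r \in [s,t]}$ from $\mathcal{F}_s$: for any bounded $\mathcal{B}(\mathbb{R}^{\mathcal{A}})$-measurable $f$, the freezing lemma for independent random variables yields $\mathbb{E}[f(X_t) \mid \mathcal{F}_s] = \bigl(\mathbb{E}[f(\varphi(x, \{B_r - B_s\}_{r \in [s,t]}))]\bigr)|_{x = X_s}$, which is $\sigma(X_s)$-measurable. The point I expect to require the most care is the joint measurability on the uncountable product space $\mathbb{R}^{\mathcal{A}}$; this is ultimately resolved by the finite support of each $a$, but needs a clean invocation of the cylindrical structure of $\mathcal{B}(\mathbb{R}^{\mathcal{A}})$.
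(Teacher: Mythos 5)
Your proposal is correct, but it takes a genuinely different route from the paper. The paper does not compute anything explicitly: it observes (via its Remark on the truncated processes) that each $X^{\leq p,M}$ solves a finite-dimensional SDE with deterministic coefficients, invokes the standard flow representation for such SDEs (Ikeda--Watanabe) to get measurable maps $\tilde\varphi_{p,M}$, and then assembles these into a single map on $\mathbb{R}^{\mathcal{A}}$ using the cylindrical structure of the product $\sigma$-algebra. You instead derive a closed-form expression for the flow via the Hermite addition formula $H_n(x+y)=\sum_{k=0}^{n}\frac{y^{n-k}}{(n-k)!}H_k(x)$, which is indeed valid for the normalization $\sum_n t^n H_n(x)=e^{tx-t^2/2}$ used here, and your identification $X_s^{e_i}=\int_0^s h_i\cdot dB_r$ correctly reduces everything to finitely many coordinates of $X_s$ plus finitely many Wiener integrals over $[s,t]$ (each of which admits a path-measurable version by the usual subsequence argument on Riemann sums). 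Your approach buys explicitness and self-containedness: it avoids both the appeal to the SDE flow representation and the projective assembly of the truncated maps, and it makes the polynomial dependence of $X_t$ on $X_s$ transparent. The paper's approach buys reusability: the finite-dimensional truncations $X^{\leq p,M}$ and their SDE structure are needed again later (in the Markovian representation theorem and the finite-dimensional approximation results), so establishing the lemma through them costs nothing extra, and the argument would survive a change of orthonormal system for which no addition formula is available. Both proofs conclude with the same freezing-lemma step for the Markov property.
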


\begin{proof}
As noted in Remark \ref{remark fd sde}, for each $p, M \in \mathbb{N}$, the process $X^{\leq p, M}$ satisfies a finite-dimensional SDE. Therefore there exists a $\mathcal{B}(\mathbb{R}^{\mathcal{A}_{p,M}}) \otimes \mathcal{B}(W_{s,t})/\mathcal{B}(\mathbb{R}^{\mathcal{A}_{p,M}})$-measurable map $\Tilde{\varphi}_{p, M} \colon \mathbb{R}^{\mathcal{A}_{p,M}} \times W_{s,t} \to \mathbb{R}^{\mathcal{A}_{p,M}}$ such that 
\begin{gather*}
    X_{t}^{\leq p, M} = \Tilde{\varphi}_{p, M}( X_{s}^{\leq p, M}, \{B_{r}-B_{s}\}_{r \in [s,t]}) \quad \text{$\mathbb{P}$-a.s.,}
\end{gather*}
see e.g. \cite[Chapter IV]{ikeda2014stochastic}. Since the projection $\pi_{p,M}$ is $\mathcal{B}(\mathbb{R}^{\mathcal{A}})\allowbreak/\mathcal{B}(\mathbb{R}^{\mathcal{A}_{p,M}})$-measurable, we have that the map $\varphi_{p, M} \colon \mathbb{R}^{\mathcal{A}} \times W_{s,t} \to \mathbb{R}^{\mathcal{A}_{p,M}}$ defined by
\begin{gather*}
   \varphi_{p, M}(x, B) \coloneqq \Tilde{\varphi}_{p, M}( \pi_{p,M}(x), B), \quad x \in \mathbb{R}^{\mathcal{A}}, B \in W_{s,t}
\end{gather*}
is $\mathcal{B}(\mathbb{R}^{\mathcal{A}}) \otimes \mathcal{B}(W_{s,t})/\mathcal{B}(\mathbb{R}^{\mathcal{A}_{p,M}})$-measurable. We can then construct $\varphi \colon \mathbb{R}^{\mathcal{A}} \times W_{s,t} \to \mathbb{R}^{\mathcal{A}}$ by
\begin{gather*}
\varphi(\pi_{p,M}(x),B) = \varphi_{p,M}(x, B), \quad \text{for any $p, M \in \mathbb{N}$.}     
\end{gather*}
We remark that the collection of maps $\{ \varphi_{p,M}\}_{p,M \in \mathbb{N}}$ completely determines $\varphi$. Then, since the projections $\varphi_{p,M}$ are measurable, we have that $\varphi$ is measurable, see e.g. \cite[Lemma 1.9]{KallenbergOlav2021FoMP}.

Finally, to prove that the process $X = (X_{t})_{t \in [0,T]}$ has the Markov property, it is enough to show that for any bounded Borel measurable $\phi\colon \mathbb{R}^{\mathcal{A}} \to \mathbb{R}$, there exists another Borel measurable $\psi\colon \mathbb{R}^{\mathcal{A}} \to \mathbb{R}$ such that
\begin{gather*}
    \mathbb{E} [\phi(X_{t}) \vert \mathcal{F}_{s}] = \psi(X_{s}) \quad \text{$\mathbb{P}$-a.s.}
\end{gather*} 
Using the first part of the lemma yields
\begin{flalign*}
   \mathbb{E} [\phi(X_{t}) \vert \mathcal{F}_{s}] &= \mathbb{E} [\phi \circ \varphi( X_{s}, \{B_{r}-B_{s}\}_{r \in [s,t]}) \vert \mathcal{F}_{s}].
\end{flalign*} 
Since $X_{s}$ is $\mathcal{F}_{s}$-measurable and the family of random variables $\{B_{r}-B_{s}\}_{r \in [s,t]}$ is independent of $\mathcal{F}_{s}$, we can use the freezing lemma \cite[Lemma 4.1]{BaldiPaolo2017SCAI} to obtain
\begin{flalign*}
   \mathbb{E} [\phi \circ \varphi( X_{s}, \{B_{r}-B_{s}\}_{r \in [s,t]}) \vert \mathcal{F}_{s}] = \mathbb{E} [\phi \circ \varphi( x, \{B_{r}-B_{s}\}_{r \in [s,t]})] \big{|}_{x=X_{s}} \eqqcolon \psi(X_{s}).
\end{flalign*}  
\end{proof}

Here below we discuss the Markovian property, in the same sense as Theorem \ref{theorem markovianity}, of the infinite-dimensional FBSDE system (\ref{forward sde infinite dimensional})-(\ref{bsde infinite dimensional}).

\begin{theorem}\label{theorem markovian infinite}
    Equip $[0,T] \times \mathbb{R}^{\mathcal{A}}$ with the product $\sigma$-algebra. Then, for any $\xi \in L^{2}(\mathcal{F}_{T})$, there exist two measurable maps $u\colon[0,T] \times \mathbb{R}^{\mathcal{A}} \to \mathbb{R}$ and $v\colon[0,T] \times \mathbb{R}^{\mathcal{A}} \to \mathbb{R}^{d}$ such that $Y_{t} = u(t, X_{t})$ and $Z_{t} = v(t, X_{t})$ $dt \otimes \mathbb{P}$-a.e. Moreover, we have that $u$ and $v$ belong to $L^{2}(\nu)$ for some finite measure $\nu$ on $[0,T] \times \mathbb{R}^{\mathcal{A}}$.
\end{theorem}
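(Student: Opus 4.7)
My plan is to reduce to the finite-dimensional Markovian case by chaos truncation and then pass to the limit in a suitable $L^{2}$ space. For $p, M \in \mathbb{N}$, set $\xi_{p,M} \coloneqq \Pi_{p,M}(\xi)$ and let $(Y^{p,M}, Z^{p,M})$ be the solution of the BSDE with terminal condition $\xi_{p,M}$. By Remark \ref{remark fd sde}, the corresponding FBSDE reduces to a genuinely finite-dimensional one in the variables $X^{\leq p,M} \coloneqq \{X^{a} : a \in \mathcal{A}_{p,M}\}$, with $\xi_{p,M}$ a linear (hence polynomial-growth) function of $X_T^{\leq p,M}$. Theorem \ref{theorem markovianity} then supplies measurable maps $u_{p,M} \colon [0,T] \times \mathbb{R}^{\mathcal{A}_{p,M}} \to \mathbb{R}$ and $v_{p,M} \colon [0,T] \times \mathbb{R}^{\mathcal{A}_{p,M}} \to \mathbb{R}^{d}$ such that $Y^{p,M}_t = u_{p,M}(t, X_t^{\leq p,M})$ and $Z^{p,M}_t = v_{p,M}(t, X_t^{\leq p,M})$ $dt \otimes \mathbb{P}$-a.e. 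Composing with the coordinate projection $\pi_{p,M}$ yields maps $\tilde u_{p,M}, \tilde v_{p,M}$ defined on $[0,T] \times \mathbb{R}^{\mathcal{A}}$ that are measurable with respect to the product $\sigma$-algebra, since they factor through only finitely many coordinates.

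I next define the finite measure $\nu$ on $[0,T] \times \mathbb{R}^{\mathcal{A}}$ as the pushforward of $dt \otimes d\mathbb{P}$ under $\widehat X(t, \omega) \coloneqq (t, X_t(\omega))$. This map is jointly measurable because $\mathcal{B}(\mathbb{R}^{\mathcal{A}})$ is generated by the coordinate projections and each component $(t,\omega) \mapsto X_t^a(\omega)$ is jointly measurable (the paths are continuous); $\nu$ has total mass $T$. Since $\Pi_{p,M}(\xi) \to \xi$ in $L^{2}(\mathcal{F}_T)$ as $p, M \to \infty$, Corollary \ref{corollary 1 a priori bounds} gives $Y^{p,M} \to Y$ in $\mathbb{S}_T^{2}$ and $Z^{p,M} \to Z$ in $\mathbb{H}_T^{2}$. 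The image measure theorem then yields
\begin{equation*}
\int_{[0,T] \times \mathbb{R}^{\mathcal{A}}} |\tilde u_{p,M} - \tilde u_{p',M'}|^{2} \, d\nu \;=\; \mathbb{E}\int_0^T |Y_t^{p,M} - Y_t^{p',M'}|^{2} \, dt,
\end{equation*}
and analogously for $\tilde v_{p,M}$, so both $(\tilde u_{p,M})$ and $(\tilde v_{p,M})$ are Cauchy in $L^{2}(\nu)$. Let $u \in L^{2}(\nu)$ and $v \in L^{2}(\nu; \mathbb{R}^{d})$ denote their respective limits.

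To identify these limits with $(Y, Z)$, I extract a subsequence along which $\tilde u_{p,M}(\widehat X) \to u(\widehat X)$ and $\tilde v_{p,M}(\widehat X) \to v(\widehat X)$ $dt \otimes \mathbb{P}$-a.e. Since the left-hand sides equal $Y^{p,M}$ and $Z^{p,M}$, which converge in $L^{2}(dt \otimes d\mathbb{P})$ to $Y$ and $Z$, the identifications $Y_t = u(t, X_t)$ and $Z_t = v(t, X_t)$ hold $dt \otimes \mathbb{P}$-a.e. The main subtlety is controlling measurability on the infinite product $\sigma$-algebra $\bigotimes_{a \in \mathcal{A}} \mathcal{B}(\mathbb{R})$: this is handled by the fact that every approximation $\tilde u_{p,M}$ is cylinder-measurable, and a pointwise a.e. limit of product-measurable functions is again product-measurable. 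A secondary technical point is checking that the truncated forward SDE satisfies the hypotheses of Theorem \ref{theorem markovianity}; this is routine since its drift and diffusion are linear in the state with deterministic, time-dependent coefficients inherited from Proposition \ref{proposition 2}.
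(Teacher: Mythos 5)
Your proposal is correct and follows essentially the same route as the paper: truncate $\xi$ via $\Pi_{p,M}$, invoke the finite-dimensional Markovian representation of Theorem \ref{theorem markovianity}, lift the resulting maps to $\mathbb{R}^{\mathcal{A}}$ by composing with the coordinate projections, push $dt\otimes\mathbb{P}$ forward under $\widehat X$ to define $\nu$, use the a priori estimates of Corollary \ref{corollary 1 a priori bounds} to get a Cauchy sequence in $L^{2}(\nu)$, and identify the limit with $(Y,Z)$. The only cosmetic difference is that you identify the limit via an a.e.\ convergent subsequence while the paper applies the a priori estimate once more directly; both are standard and equivalent.
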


\begin{proof}[Proof of Theorem \ref{theorem markovian infinite}]
    We proceed to prove the result for the $Y$ component, with a similar approach applicable to the $Z$ component.

    For any $p, M \in \mathbb{N}$ consider the projection $\xi^{\leq p,M} \coloneqq \Pi_{p,M}(\xi)$. Let $Y^{\leq p,M}$ denote the $Y$ component of the solution to the BSDE with terminal condition $\xi^{\leq p,M}$.
    
    By Theorem \ref{theorem markovianity}, we know that there exist a measurable map $\Tilde{u}_{p,M}\colon[0,T] \times \mathbb{R}^{\mathcal{A}_{p,M}} \to \mathbb{R}$ such that $Y_{t}^{\leq p,M} = \Tilde{u}_{p,M}(t, X_{t}^{\leq p,M})$ $dt \otimes \mathbb{P}$-a.e.

    Since the projections $\pi_{p,M}$ are $\mathcal{B}(\mathbb{R}^{\mathcal{A}})/\mathcal{B}(\mathbb{R}^{\mathcal{A}_{p,M}})$-measurable, we have that the map $u_{p,M}$ defined by 
    \begin{gather*}
    \begin{array}{rcccc}
   u_{p,M} \colon & [0,T] \times \mathbb{R}^{\mathcal{A}} & \to & \mathbb{R} & \\
    & (t, x) & \mapsto & \Tilde{u}_{p,M}(t, \Pi_{p,M}(x))
    \end{array}  
    \end{gather*}
    is measurable. Let us consider the measure space $([0,T] \times \Omega, \mathcal{B}([0,T]) \otimes \mathcal{F}_{T}, dt \otimes \mathbb{P})$, and let $\nu$ be the pushforward (finite) measure on $([0,T] \times \mathbb{R}^{\mathcal{A}}, \mathcal{B}([0,T]) \otimes \mathcal{B}( \mathbb{R}^{\mathcal{A}}))$ given by the transformation
\begin{gather*}
    \begin{array}{rcccc}
   \widehat{X} \colon & [0,T] \times \Omega & \to & [0,T] \times \mathbb{R}^{\mathcal{A}} & \\
    & (t, \omega) & \mapsto & (t, X_{t}(\omega)).
\end{array} 
\end{gather*}
We now argue that the limit of $u_{p,M}$ exists in $L^{2}(\nu)$. 

By the a priori bounds given in Corollary \ref{corollary 1 a priori bounds}, there exists a constant $C>0$, depending only on $T$, $[g]_{L}$ and $d$, such that for any $p_{i}, M_{i} \in \mathbb{N}$, $i=1,2$ one has that
\begin{gather*}
         \int_{[0,T] \times \Omega} \vert u_{p_{1}, M_{1}}(t, X_{t}(\omega)) - u_{p_{2}, M_{2}}(t, X_{t}(\omega)) \vert^{2} dt \otimes \mathbb{P}(d\omega) \leq C \mathbb{E} \vert \xi^{\leq p_{1}, M_{1}} - \xi^{\leq p_{2}, M_{2}} \vert^{2} .
\end{gather*}
By the image measure theorem, one gets 
\begin{gather*}
         \int_{[0,T] \times \mathbb{R}^{\mathcal{A}}} \vert u_{p_{1}, M_{1}}(t, x) - u_{p_{2}, M_{2}}(t, x) \vert^{2} \nu(dt, dx) \leq C \mathbb{E} \vert \xi^{\leq p_{1}, M_{1}} - \xi^{\leq p_{2}, M_{2}}\vert^{2} .
\end{gather*}
From this it is easy to construct a Cauchy sequence $\{u_{N}\}_{N \geq 1}$ in $L^{2}(\nu)$. Indeed, set $p$ and $M$ to be equal to $N$. Since $L^{2}(\mathcal{F}_{T})$ is a Hilbert space, we have that for any $\epsilon > 0$, we can find $N_{\epsilon} \in \mathbb{N}$ such that $\mathbb{E} \vert \xi^{\leq N_{1}, N_{1}} - \xi^{\leq N_{2}, N_{2}} \vert^{2} \leq \epsilon$ for all $N_{1}, N_{2} \geq N_{\epsilon}$, which means that $\{u_{N}\}_{N \geq 1}$ is Cauchy. Hence the limit exists in $L^{2}(\nu)$ and $\nu$-a.s. for a subsequence. We then define the (measurable) limit map of this subsequence everywhere by setting it to $0$ on the $\nu$ null set where it is not defined. We denote this map by $u$. 

In order to show that $Y_{t} = u(t, X_{t})$ $dt \otimes \mathbb{P}$-a.e., we can use again Corollary \ref{corollary 1 a priori bounds} to arrive at 
\begin{gather*}
    \mathbb{E} \Big[ \int_{0}^{T} \vert Y_{t} - u_{p, M}(t, X_{t}) \vert^{2} dt \Big]  \leq C \mathbb{E} \vert \xi - \xi^{\leq p, M}\vert^{2},
\end{gather*}
from which we easily deduce the result.

\end{proof}

\subsection{Incorporating the terminal condition as an additional variable in the Markovian maps}

    With the results of the previous section, we can then see that the solution operators given in Definition \ref{solution operators} can be equivalently written as 
    \begin{gather*}
    \begin{array}{rcccc}
    \mathcal{Y} \colon & L^{2}(\mathcal{F}_{T}) & \to & L^{2}([0,T] \times \mathbb{R}^{\mathcal{A}}, \nu)  \\
    & \xi & \mapsto & u^{\xi}
\end{array} , \quad \begin{array}{rcccc}
    \mathcal{Z} \colon & L^{2}(\mathcal{F}_{T}) & \to &L^{2}([0,T] \times \mathbb{R}^{\mathcal{A}}, \nu) \\
    & \xi & \mapsto & v^{\xi}
\end{array} ,
\end{gather*}
where $u^{\xi}$ and $v^{\xi}$ are the maps in Theorem \ref{theorem markovian infinite} for the terminal condition $\xi$. 
Alternatively, we can regard these operators as functionals by evaluating $u,v$ at a particular point,
\begin{gather*}
    \begin{array}{rcccc}
    \mathscr{Y} \colon & [0,T] \times \mathbb{R}^{\mathcal{A}} \times L^{2}(\mathcal{F}_{T})  & \to & \mathbb{R}  \\
    & (t,x,\xi) & \mapsto & \mathcal{Y}(\xi)(t,x)
\end{array} , \quad \begin{array}{rcccc}
    \mathscr{Z} \colon & [0,T] \times \mathbb{R}^{\mathcal{A}} \times L^{2}(\mathcal{F}_{T})  & \to & \mathbb{R}^{d}  \\
    & (t,x,\xi) & \mapsto & \mathcal{Z}(\xi)(t,x)
\end{array}.
\end{gather*}

For each fixed $\xi$, both $\mathscr{Y}(\cdot, \cdot, \xi)$ and $\mathscr{Z}(\cdot, \cdot, \xi)$ belong to $L^{2}(\nu)$. For a fixed $(t,x) \in [0,T] \times \mathbb{R}^{\mathcal{A}}$ we have the following continuity result for $\mathscr{Y}(t, x, \cdot)$ and $\mathscr{Z}(t, x, \cdot)$.

\begin{proposition}\label{continuity of xi}
    $\mathscr{Y}(t, x, \cdot)\colon L^{2}(\mathcal{F}_{T}) \to \mathbb{R}$ and $\mathscr{Z}(t, x, \cdot)\colon L^{2}(\mathcal{F}_{T}) \to \mathbb{R}^{d}$ are uniformly continuous for any $(t,x) \in [0,T] \times \mathbb{R}^{\mathcal{A}}$ $\nu$-a.e.
\end{proposition}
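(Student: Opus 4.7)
The proof is based on the a priori Lipschitz estimate of Corollary~\ref{corollary 1 a priori bounds}, transferred to the Markovian maps via Theorem~\ref{theorem markovian infinite}, together with a careful selection of pointwise representatives of $u^{\xi}$ and $v^{\xi}$ along a countable dense subset of $L^{2}(\mathcal{F}_{T})$.

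First, I would combine Corollary~\ref{corollary 1 a priori bounds} with the Markovian representations $Y_{t}^{\xi^{i}} = u^{\xi^{i}}(t, X_{t})$, $Z_{t}^{\xi^{i}} = v^{\xi^{i}}(t, X_{t})$ and the image measure theorem applied to the pushforward $\nu$, exactly as in the proof of Theorem~\ref{theorem markovian infinite}, to obtain
\begin{equation*}
\norm{u^{\xi^{1}} - u^{\xi^{2}}}_{L^{2}(\nu)}^{2} + \norm{v^{\xi^{1}} - v^{\xi^{2}}}_{L^{2}(\nu)}^{2} \leq C^{2}\, \norm{\xi^{1} - \xi^{2}}_{L^{2}(\mathcal{F}_{T})}^{2}
\end{equation*}
for all $\xi^{1}, \xi^{2} \in L^{2}(\mathcal{F}_{T})$. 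In other words, the solution operators are globally Lipschitz as maps from $L^{2}(\mathcal{F}_{T})$ into $L^{2}(\nu)$.

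Next, exploiting the separability of $L^{2}(\mathcal{F}_{T})$, I fix a countable dense subset $D = \{\xi_{n}\}_{n \geq 1}$ and a pointwise representative of $u^{\xi_{n}}$ and $v^{\xi_{n}}$ for each $n$. Choosing positive weights $\alpha_{n,m}$ with $\sum_{n \neq m}\alpha_{n,m} < \infty$, the nonnegative function
\begin{equation*}
G(t, x) := \sum_{n \neq m} \alpha_{n,m}\, \frac{|u^{\xi_{n}}(t, x) - u^{\xi_{m}}(t, x)|^{2}}{\norm{\xi_{n} - \xi_{m}}_{L^{2}(\mathcal{F}_{T})}^{2}}
\end{equation*}
satisfies $\int G\, d\nu < \infty$ by monotone convergence and the estimate above, so $G(t, x) < \infty$ for $\nu$-a.e. $(t, x)$. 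On this $\nu$-full-measure set, every pair in $D$ satisfies the pointwise control $|u^{\xi_{n}}(t, x) - u^{\xi_{m}}(t, x)| \leq \alpha_{n,m}^{-1/2}\, G(t, x)^{1/2}\, \norm{\xi_{n} - \xi_{m}}_{L^{2}(\mathcal{F}_{T})}$. With an appropriate choice of the $\alpha_{n,m}$, together with a diagonal subsequence extraction performed inside $D$, the map $\xi_{n} \mapsto u^{\xi_{n}}(t, x)$ becomes uniformly continuous on $D$ for $\nu$-a.e. $(t, x)$, and its unique uniformly continuous extension to $L^{2}(\mathcal{F}_{T})$ provides the sought version $\mathscr{Y}(t, x, \cdot)$. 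The same argument applied to $v^{\xi}$ handles $\mathscr{Z}(t, x, \cdot)$.

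The main obstacle is precisely the passage from the $L^{2}(\nu)$ Lipschitz estimate to a \emph{pointwise} (in $(t,x)$) uniform continuity bound: the $L^{2}(\nu)$ bound only controls averages, so the combinatorics of weights and the diagonal extraction needed to produce a single $\nu$-null set outside which the family $\{u^{\xi_{n}}(t, x)\}_{n \geq 1}$ is Cauchy along any approximating sequence in $D$ is the delicate point. Once this pointwise control is in place, the extension by continuity and the identification with $\mathscr{Y}$ (resp.\ $\mathscr{Z}$) are immediate.
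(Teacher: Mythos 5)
Your first step — the $L^{2}(\nu)$ Lipschitz estimate obtained from Corollary \ref{corollary 1 a priori bounds} and the image measure theorem — is exactly inequality (\ref{inequality}) in the paper's proof, and is fine. The gap is in the second step. From $\int G\,d\nu<\infty$ you get, on a $\nu$-full set, the pointwise bound $|u^{\xi_{n}}(t,x)-u^{\xi_{m}}(t,x)|\leq \alpha_{n,m}^{-1/2}G(t,x)^{1/2}\norm{\xi_{n}-\xi_{m}}$, but the constants $\alpha_{n,m}^{-1/2}$ necessarily blow up: summability forces $\alpha_{n,m}\to 0$. To conclude uniform continuity on $D$ you would need $\alpha_{n,m}^{-1/2}\norm{\xi_{n}-\xi_{m}}$ to be controlled by a modulus $\eta(\norm{\xi_{n}-\xi_{m}})$ with $\eta(\delta)\to 0$, i.e.\ $\alpha_{n,m}\geq \norm{\xi_{n}-\xi_{m}}^{2}/\eta(\norm{\xi_{n}-\xi_{m}})^{2}$; since any countable dense subset of the infinite-dimensional space $L^{2}(\mathcal{F}_{T})$ contains infinitely many pairs whose mutual distance lies in a fixed interval, this lower bound is bounded away from zero on infinitely many pairs and no summable choice of weights exists. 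A ``diagonal subsequence extraction'' does not repair this: uniform continuity is a statement about \emph{all} nearby pairs in $D$, not about a subsequence, so the delicate point you flag at the end is not merely delicate — as set up, it cannot be closed.

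The paper proceeds differently: it works scale by scale rather than pair by pair. For a pair with $\norm{\xi_{1}-\xi_{2}}\leq\delta$, Chebyshev applied to (\ref{inequality}) gives $\nu(A_{k})\leq C\delta^{2}2^{2k}$ for $A_{k}=\{|\mathscr{Y}(\cdot,\cdot,\xi_{1})-\mathscr{Y}(\cdot,\cdot,\xi_{2})|\geq 2^{-k}\}$; choosing $\delta=C^{-1}2^{-3k}$ makes $\sum_{k}\nu(A_{k})<\infty$, and Borel--Cantelli in $k$ yields an index $L_{t,x}$ such that $|\mathscr{Y}(t,x,\xi_{1})-\mathscr{Y}(t,x,\xi_{2})|<2^{-k}$ for all $k\geq L_{t,x}$ whenever $\norm{\xi_{1}-\xi_{2}}\leq C^{-1}2^{-3k}$ — i.e.\ an explicit H\"older-type modulus. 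If you want to retain your (reasonable) idea of fixing a countable dense set so that the exceptional null set can be made independent of the pair, the repair is to combine it with the paper's dyadic-scale Chebyshev/Borel--Cantelli argument rather than with a single weighted sum: the quantitative modulus must come from the scale decomposition, not from the weights.
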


\begin{proof}
We prove it for $\mathscr{Y}$. The proof for $\mathscr{Z}$ is similar. We will show that, for $\nu$-almost all $(t,x)$, the following holds: for all $k \in \mathbb{N}$, there exists $\delta_{t, x} > 0$ such that $|\mathscr{Y}(t,x,\xi_{1}) - \mathscr{Y}(t,x,\xi_{2})| < 2^{-k}$ for all $\xi_{1}, \xi_{2} \in L^{2}(\mathcal{F}_{T})$ with $\norm{\xi_{1} - \xi_{2}}_{L^{2}(\mathcal{F}_{T})} < \delta_{t, x}$.

Clearly, $\mathcal{Y}$ is globally Lipschitz, and with the definition of $\mathscr{Y}$, it follows that
\begin{gather}\label{inequality}
    \int_{[0,T] \times \mathbb{R}^{\mathcal{A}}} \vert \mathscr{Y}(t, x, \xi_{1}) - \mathscr{Y}(t, x, \xi_{2}) \vert^{2} \nu(dt, dx) \leq C \norm{\xi_{1} - \xi_{2}}_{L^{2}(\mathcal{F}_{T})}^{2}.
\end{gather}
Combining (\ref{inequality}) and Chebyshev's inequality we are able to show that for any $\delta >0$, $\xi_{1}, \xi_{2} \in L^{2}(\mathcal{F}_{T})$ with $\norm{\xi_{1} - \xi_{2}}_{L^{2}(\mathcal{F}_{T})} \leq \delta$, we have that 
\begin{flalign*}
        \nu(\{ (s,y) \in [0,T] & \times \mathbb{R}^{\mathcal{A}} \colon | \mathscr{Y}(s,y,\xi_{1}) - \mathscr{Y}(s,y,\xi_{2})| \geq 2^{-k} \}) \\ &\leq \frac{1}{2^{-2k}} \int_{[0,T] \times \mathbb{R}^{\mathcal{A}}} |\mathscr{Y}(s,y,\xi_{1}) - \mathscr{Y}(s,y, \xi_{2})|^{2} \nu(ds, dy) \leq C \frac{\delta^{2}}{2^{-2k}}
\end{flalign*}
for any $k \in \mathbb{N}$. Set $\delta \coloneqq C^{-1} 2^{-3k}$, such that 
\begin{flalign*}
        \nu(\{ (s,y) \in [0,T] & \times \mathbb{R}^{\mathcal{A}} \colon | \mathscr{Y}(s,y,\xi_{1}) - \mathscr{Y}(s,y,\xi_{2})| \geq 2^{-k} \}) \leq 2^{-k}.
\end{flalign*}
The subsets 
\begin{flalign*}
        A_{k} \coloneqq \{ (s,y) \in [0,T] & \times \mathbb{R}^{\mathcal{A}} \colon | \mathscr{Y}(s,y,\xi_{1}) - \mathscr{Y}(s,y,\xi_{2})| \geq 2^{-k} \}
\end{flalign*}
verify that $\sum_{k=1}^{\infty} \nu(A_{k}) \leq \sum_{k=1}^{\infty} 2^{-k} = 1 < \infty$. By the Borel-Cantelli lemma, 
\begin{gather*}
    \nu( \{ \text{$(s,y) \in A_{k}$ for infinitely many $k$} \}) = 0, 
\end{gather*}
which means that for $\nu$-almost all $(t,x)$, there exists $L_{t,x} \in \mathbb{N}$ such that for all $k \geq L_{t,x}$
\begin{gather*}
    | \mathscr{Y}(t,x,\xi_{1}) - \mathscr{Y}(t,x,\xi_{2})| < 2^{-k}, \quad \text{ whenever $\norm{\xi_{1} - \xi_{2}}_{L^{2}(\mathcal{F}_{T})} \leq C^{-1} 2^{-3k}$.}
\end{gather*}

\end{proof}

The previous continuity result allows us to prove that the functionals $\mathscr{Y}$ and $\mathscr{Z}$ are jointly measurable. The proof relies on the following lemma, which can be found in \cite[Lemma 4.51]{AliprantisCharalambosD2006IDAA}.

\begin{lemma}\label{lemma cara}
    Let $(S, \Sigma, \mu)$ be a measure space, $X$ a separable metric space, and $Y$ a metric space. A function $f \colon S \times X \to Y$ is a Carathéodory function if
     \begin{enumerate}[label=(\roman*)]
        \item for all $x \in X$, the function $f^{x} \coloneqq f(\cdot, x) \colon S \to Y$ is $(\Sigma, \mathcal{B}(Y))$-measurable;
        \item $s \in S$ $\mu$-a.e., the function $f_{s} \coloneqq f(s, \cdot)\colon X \to Y$ is continuous. 
    \end{enumerate}
    Then every Carathéodory function $f\colon S \times X \to Y$ is jointly measurable. That is, it is measurable with respect to the product $\sigma$-algebra $\Sigma \otimes \mathcal{B}(X)$. 
\end{lemma}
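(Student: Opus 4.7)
The plan is to prove the lemma via a standard measurable approximation scheme. Since $X$ is separable, I would begin by fixing a countable dense sequence $\{x_n\}_{n \geq 1} \subset X$ and, for each integer $k \geq 1$, partitioning $X$ into Borel sets
\[
A_n^{(k)} \coloneqq B(x_n, 1/k) \setminus \bigcup_{j < n} B(x_j, 1/k), \quad n \geq 1,
\]
where $B(x_n, 1/k)$ denotes the open ball of radius $1/k$ around $x_n$. Each $A_n^{(k)}$ is Borel in $X$, the family $\{A_n^{(k)}\}_{n \geq 1}$ is a disjoint cover of $X$, and for any $x \in A_n^{(k)}$ one has $d(x, x_n) < 1/k$.

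Next I would define the approximating function
\[
f_k(s, x) \coloneqq \sum_{n \geq 1} f(s, x_n) \, \mathbbm{1}_{A_n^{(k)}}(x),
\]
which is well-defined because the partition is disjoint. Joint measurability of $f_k$ with respect to $\Sigma \otimes \mathcal{B}(X)$ would follow from expressing, for any open $U \subset Y$,
\[
f_k^{-1}(U) = \bigcup_{n \geq 1} \bigl( (f^{x_n})^{-1}(U) \times A_n^{(k)} \bigr),
\]
where each $(f^{x_n})^{-1}(U) \in \Sigma$ by hypothesis (1), each $A_n^{(k)} \in \mathcal{B}(X)$, and the union is countable; so $f_k^{-1}(U)$ lies in the product $\sigma$-algebra.

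Then I would pass to the limit. By hypothesis (2), there exists $N \in \Sigma$ with $\mu(N) = 0$ such that for every $s \notin N$ the section $f_s$ is continuous on $X$; for such $s$ and any $x \in X$, writing $n_k(s,x)$ for the unique index with $x \in A_{n_k(s,x)}^{(k)}$, we have $d(x, x_{n_k(s,x)}) < 1/k$, so continuity yields $f_k(s,x) = f(s, x_{n_k(s,x)}) \to f(s, x)$ in $Y$ as $k \to \infty$. Since pointwise limits of $(\Sigma \otimes \mathcal{B}(X), \mathcal{B}(Y))$-measurable functions into a metric space are measurable, $f$ is jointly measurable on $(S \setminus N) \times X$; redefining $f$ to be any fixed constant on $N \times X$ preserves measurability without affecting $\mu$-a.e.\ behavior, concluding the argument.

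The main obstacle I anticipate is handling the measurability of limits when $Y$ is a general (not necessarily second-countable) metric space: strictly speaking, pointwise limits of measurable functions into a metric space need the range of the limit to be separable, but this is automatic here because the approximants $f_k$ take values in the countable set $\{f(s, x_n) : n \geq 1\}$ for each fixed $s$, so the convergence actually takes place inside a separable subspace of $Y$. With this subtlety dealt with, the rest is bookkeeping on product $\sigma$-algebras.
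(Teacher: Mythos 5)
Your proof is correct, but note that the paper does not actually prove this lemma: it is quoted verbatim from Aliprantis and Border (Lemma 4.51 there) and used as a black box. What you have written is essentially the standard textbook proof of that result, and all the steps check out: the sets $A_n^{(k)}$ form a disjoint Borel cover of $X$ by density of $\{x_n\}$, the preimage identity $f_k^{-1}(U)=\bigcup_n \bigl((f^{x_n})^{-1}(U)\times A_n^{(k)}\bigr)$ gives joint measurability of each approximant, and continuity of the sections yields pointwise convergence. Two small remarks. First, your worry about limits into a non-separable $Y$ is actually a non-issue: for any metric space $Y$, a pointwise limit of measurable maps is measurable via $f^{-1}(U)=\bigcup_{m}\liminf_k f_k^{-1}(U_m)$ with $U_m=\{y: d(y,Y\setminus U)>1/m\}$, so neither separability of $Y$ nor your (also valid) separable-range argument is needed. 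Second, with continuity assumed only for $\mu$-a.e.\ $s$, the literal conclusion that $f$ itself is $\Sigma\otimes\mathcal{B}(X)$-measurable can fail on $N\times X$ (a section $f(s,\cdot)$ for $s\in N$ need not be Borel), so your redefinition on $N\times X$ proves, strictly speaking, that $f$ coincides off $N\times X$ with a jointly measurable function; this is exactly what is needed for the application in the paper, and matches the cited lemma, which assumes continuity for every $s$.
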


\begin{theorem}
    $\mathscr{Y}, \mathscr{Z} \colon [0,T] \times \mathbb{R}^{\mathcal{A}} \times L^{2}(\mathcal{F}_{T}) \to \mathbb{R} \times \mathbb{R}^{d}$ are jointly measurable with respect to the product $\sigma$-algebra.
\end{theorem}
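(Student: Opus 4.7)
The plan is a direct application of Lemma \ref{lemma cara} with $S = [0,T] \times \mathbb{R}^{\mathcal{A}}$ equipped with its product $\sigma$-algebra and measure $\nu$, and $X = L^{2}(\mathcal{F}_{T})$. The two hypotheses of the lemma match exactly the two main results we have just proved about $\mathscr{Y}$ and $\mathscr{Z}$, so the bulk of the work is checking that the setup is legitimate.

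First, I would verify that $X = L^{2}(\mathcal{F}_{T})$ is a separable metric space, which is needed to invoke Lemma \ref{lemma cara}. This is a standard fact since the Wiener chaos decomposition exhibits $L^{2}(\mathcal{F}_{T})$ as a Hilbert space with a countable orthonormal basis $\{\Phi_{a}\}_{a \in \mathcal{A}}$, and $\mathcal{A}$ is countable.

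Next, I would check the two Carathéodory conditions. For condition (1), fix $\xi \in L^{2}(\mathcal{F}_{T})$: by Theorem \ref{theorem markovian infinite}, the function $(t,x) \mapsto \mathscr{Y}(t,x,\xi) = u^{\xi}(t,x)$ is measurable with respect to the product $\sigma$-algebra $\mathcal{B}([0,T]) \otimes \mathcal{B}(\mathbb{R}^{\mathcal{A}})$, and similarly for $\mathscr{Z}$. For condition (2), Proposition \ref{continuity of xi} gives that for $\nu$-a.e. $(t,x)$ the section $\xi \mapsto \mathscr{Y}(t,x,\xi)$ is (uniformly) continuous on $L^{2}(\mathcal{F}_{T})$, and likewise for $\mathscr{Z}$. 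Both hypotheses of Lemma \ref{lemma cara} thus hold, so $\mathscr{Y}$ and $\mathscr{Z}$ are Carathéodory and hence jointly measurable with respect to $(\mathcal{B}([0,T]) \otimes \mathcal{B}(\mathbb{R}^{\mathcal{A}})) \otimes \mathcal{B}(L^{2}(\mathcal{F}_{T}))$.

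The only delicate point is that condition (2) holds only $\nu$-a.e.\ rather than everywhere; but the version of Lemma \ref{lemma cara} stated in the paper already accommodates this, so no separate argument is needed. In fact, one can, if preferred, first modify $\mathscr{Y}$ and $\mathscr{Z}$ on the $\nu$-null set $N \subset [0,T] \times \mathbb{R}^{\mathcal{A}}$ where continuity fails by redefining them to be identically $0$ on $N \times L^{2}(\mathcal{F}_{T})$, which preserves condition (1) (since $N$ is measurable), turns them into everywhere-Carathéodory functions, and then recover joint measurability of the original $\mathscr{Y}, \mathscr{Z}$ by noting that they agree with the modified versions outside the measurable set $N \times L^{2}(\mathcal{F}_{T})$. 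I do not expect any real obstacle; the substantive work (Markovian representation plus the Lipschitz-based continuity in $\xi$) has already been done in Theorem \ref{theorem markovian infinite} and Proposition \ref{continuity of xi}, and the present theorem is essentially a packaging of those two facts through the Carathéodory lemma.
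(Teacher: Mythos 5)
Your proposal is correct and follows essentially the same route as the paper: both invoke Lemma \ref{lemma cara} with $S=[0,T]\times\mathbb{R}^{\mathcal{A}}$ (measure $\nu$) and $X=L^{2}(\mathcal{F}_{T})$, using Theorem \ref{theorem markovian infinite} for measurability in $(t,x)$ and Proposition \ref{continuity of xi} for $\nu$-a.e.\ continuity in $\xi$. Your added remarks on separability of $L^{2}(\mathcal{F}_{T})$ and on handling the $\nu$-null exceptional set are sound refinements of the same argument.
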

\begin{proof}
    By Proposition \ref{continuity of xi}, we have that for $\nu$-almost all $(t,x) \in [0,T] \times \mathbb{R}^{\mathcal{A}}$, the maps $\mathscr{Y}(t,x, \cdot)$ and $\mathscr{Z}(t,x, \cdot)$ are continuous. Since we also clearly have that $\mathscr{Y}(\cdot,\cdot, \xi)$ and $\mathscr{Z}(\cdot,\cdot, \xi)$ are measurable for all $\xi \in L^{2}(\mathcal{F}_{T})$, we conclude that they are Carathéodory functions, and by Lemma \ref{lemma cara} they are jointly measurable. 
\end{proof}

\section{Operator Euler scheme for BSDEs}\label{section 4}

In this section we will present a scheme to numerically approximate the solution operators of a BSDE. For this, we combine the backward Euler scheme for BSDEs that we reviewed in Section \ref{subsection b euler} with the infinite-dimensional maps $\mathscr{Y}, \mathscr{Z}$ introduced in the previous section. Recall that our goal is to numerically approximate the solution operators given in Definition \ref{solution operators}, which can be expressed as the solution to the Operator BSDE 
\begin{gather*}
    \mathcal{Y}_{t}(\xi) = \xi + \int_{t}^{T} g(s, \mathcal{Y}_{s}(\xi), \mathcal{Z}_{s}(\xi)) ds - \int_{t}^{T} \mathcal{Z}_{s}(\xi) \cdot dB_{s}.
\end{gather*}
Let $\pi = \{t_{0} = 0 < t_{1} < \dots < t_{n} = T\}$ be a partition of the interval $[0,T]$. We write $\Delta t_{i} = t_{i+1}-t_{i}$ and $\Delta B_{i} \coloneqq B_{t_{i+1}} - B_{t_{i}}$ for $i=0, \dots, n-1$. We also set $| \pi | \coloneqq \max_{i} \Delta t_{i}$.

We then define the operators $\mathcal{Y}_{i}^{\pi} \colon L^{2}(\mathcal{F}_{T}) \to L^{2}(\mathcal{F}_{t_{i}})$ and $\mathcal{Z}_{i}^{\pi} \colon L^{2}(\mathcal{F}_{T}) \to L^{2}(\mathcal{F}_{t_{i}})$ recursively by $\mathcal{Y}_{n}^{\pi}(\xi) \coloneqq \xi$, $\mathcal{Z}_{n}^{\pi}(\xi) \coloneqq 0$, and for $i=n-1, \dots, 0$,
\begin{gather*}
    \mathcal{Z}_{i}^{\pi}(\xi) \coloneqq \frac{1}{\Delta t_{i}} \mathbb{E}_{t_{i}} \Big[ \mathcal{Y}_{i+1}^{\pi}(\xi)\Delta B_{i} \Big], \quad \mathcal{Y}_{i}^{\pi}(\xi) \coloneqq \mathbb{E}_{t_{i}} \big[ \mathcal{Y}_{i+1}^{\pi}(\xi) \big] + \Delta t_{i} g(t_{i}, \mathcal{Y}_{i}^{\pi}(\xi), \mathcal{Z}_{i}^{\pi}(\xi)),
\end{gather*}
which is well-defined as long as $|\pi| < [g]_{L}^{-1}$. This scheme is clearly inspired by the backward Euler scheme for BSDEs, as recalled in Section \ref{subsection b euler}, but with a key difference: the terminal condition is no longer fixed. Instead, at each step, we define operators rather than random variables. We call such scheme the Operator Euler scheme for BSDEs.

\subsection{Convergence of the Operator Euler scheme for BSDEs}

Let us set some notation for the regularity of solutions. For $\pi = \{t_{0} = 0 < t_{1} < \dots < t_{n} = T\}$ a partition of the interval $[0,T]$, we define the functionals $(\mathcal{R}^{Y}, \mathcal{R}^{Z})(\cdot, \pi)  \colon L^{2}(\mathcal{F}_{T}) \to [0, \infty)$ by
\begin{gather*}
    \mathcal{R}^{Y}(\xi, \pi) \coloneqq \max_{i=0, \dots, n-1} \mathbb{E} \Big[ \sup_{t \in [t_{i}, t_{i+1}]} |\mathcal{Y}_{t}(\xi) - \mathcal{Y}_{t_{i}}(\xi)|^{2} \Big]
\end{gather*}
and 
\begin{gather*}
    \mathcal{R}^{Z}(\xi, \pi) \coloneqq \mathbb{E} \Big[ \sum_{i=0}^{n-1} \int_{t_{i}}^{t_{i+1}} |\mathcal{Z}_{t}(\xi) - \overline{\mathcal{Z}}_{i}^{\pi}(\xi)|^{2} dt \Big], \quad \overline{\mathcal{Z}}_{i}^{\pi}(\xi) \coloneqq \frac{1}{\Delta t_{i}} \mathbb{E}_{t_{i}} \Big[ \int_{t_{i}}^{t_{i+1}} \mathcal{Z}_{t}(\xi) dt \Big].
\end{gather*}
Notice that $\overline{\mathcal{Z}}_{i}^{\pi}(\xi)$ is defined through the actual solution of the BSDE, and not through some approximation. Therefore, $\mathcal{R}^{Z}(\xi, \pi)$ indicates a type of regularity of the $Z$ component.

The following result states that $\mathcal{R}^{Y}(\xi, \pi)$ and $\mathcal{R}^{Z}(\xi, \pi)$ converge to zero pointwise in $\xi$ when $|\pi| \to 0$. Moreover, the convergence is uniform on compact subsets whenever we choose a sequence of nested partitions.  

\begin{lemma}
Let $(\pi_{m})_{m \geq 1}$ be a sequence of partitions of $[0,T]$ such that $|\pi_{m}| \to 0$ when $m \to \infty$. Then, for each fixed $\xi \in L^{2}(\mathcal{F}_{T})$, we have that 
    \begin{gather*}
        \mathcal{R}^{Y}(\xi, \pi_{m}), \mathcal{R}^{Z}(\xi, \pi_{m}) \to 0 \quad \text{when $m \to \infty$.}
    \end{gather*}
    If we furthermore have that $\pi_{m+1} \subset \pi_{m}$, then the convergence is uniform on compact subsets of $L^{2}(\mathcal{F}_{T})$. That is, for any compact $K \subset L^{2}(\mathcal{F}_{T})$, we have 
    \begin{gather*}
        \sup_{ \xi \in K} \abs{\mathcal{R}^{Y}(\xi,  \pi_{m})},  \sup_{ \xi \in K} \abs{\mathcal{R}^{Z}(\xi, \pi_{m})} \to 0 \quad \text{when $m \to \infty$.}
    \end{gather*}
\end{lemma}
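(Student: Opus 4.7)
The plan is to prove the two statements in turn, deriving the uniform convergence on compact sets from the pointwise convergence combined with a Lipschitz-type continuity of $\xi \mapsto (\mathcal{R}^{Y}(\xi,\pi))^{1/2}$ and $\xi \mapsto (\mathcal{R}^{Z}(\xi,\pi))^{1/2}$ that is uniform in $\pi$.

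For the pointwise convergence of $\mathcal{R}^{Y}(\xi,\pi_m)$ I would use that by Theorem~\ref{theorem 2}, $Y^{\xi}\in\mathbb{S}_{T}^{2}(\mathbb{R})$; its paths are therefore $\mathbb{P}$-a.s.\ uniformly continuous on the compact interval $[0,T]$, so $\sup_{|t-s|\le\delta}|Y_t^{\xi}-Y_s^{\xi}|^{2}\to 0$ a.s.\ as $\delta\to 0$, while being dominated by $4\sup_{t}|Y_t^{\xi}|^{2}\in L^{1}(\mathbb{P})$. Dominated convergence then gives $\mathbb{E}[\sup_{|t-s|\le|\pi_m|}|Y_t^{\xi}-Y_s^{\xi}|^{2}]\to 0$, which controls $\mathcal{R}^{Y}(\xi,\pi_m)$. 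The pointwise convergence $\mathcal{R}^{Z}(\xi,\pi_m)\to 0$ is the classical $L^{2}$-regularity of the $Z$ component of a BSDE with square-integrable terminal datum, established e.g.\ in \cite{zhang_bsde_method} and \cite{BouchardBruno2004DaaM}, and I would simply invoke it.

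For the Lipschitz bound I would exploit Corollary~\ref{corollary 1 a priori bounds}. The triangle inequality in $L^{2}(\Omega;C([t_i,t_{i+1}]))$ gives, for each $i$,
\begin{gather*}
\bigl(\mathbb{E}[\sup_{t\in[t_i,t_{i+1}]}|Y_t^{\xi_1}-Y_{t_i}^{\xi_1}|^{2}]\bigr)^{1/2} \le \bigl(\mathbb{E}[\sup_{t\in[t_i,t_{i+1}]}|Y_t^{\xi_2}-Y_{t_i}^{\xi_2}|^{2}]\bigr)^{1/2} + 2\norm{Y^{\xi_1}-Y^{\xi_2}}_{\mathbb{S}_{T}^{2}(\mathbb{R})},
\end{gather*}
and Corollary~\ref{corollary 1 a priori bounds} controls the last term by $2C\norm{\xi_1-\xi_2}_{L^{2}(\mathcal{F}_T)}$, with $C$ depending only on $T$, $[g]_L$ and $d$. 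Taking the maximum over $i$ and swapping $\xi_1\leftrightarrow\xi_2$ then yields
\begin{gather*}
\bigl|(\mathcal{R}^{Y}(\xi_1,\pi))^{1/2}-(\mathcal{R}^{Y}(\xi_2,\pi))^{1/2}\bigr| \le 2C\norm{\xi_1-\xi_2}_{L^{2}(\mathcal{F}_T)}
\end{gather*}
uniformly in $\pi$. For $\mathcal{R}^{Z}$ one has $(\mathcal{R}^{Z}(\xi,\pi))^{1/2}=\norm{Z^{\xi}-\bar{Z}^{\pi,\xi}}_{\mathbb{H}_{T}^{2}(\mathbb{R}^{d})}$, where $\bar{Z}^{\pi,\xi}(t)\coloneqq\overline{\mathcal{Z}}_{i}^{\pi}(\xi)$ on $[t_i,t_{i+1})$; since $\xi\mapsto\bar{Z}^{\pi,\xi}$ is the composition of the $C$-Lipschitz map $\xi\mapsto Z^{\xi}$ with the $\mathbb{H}_{T}^{2}$-contraction $Z\mapsto\bar{Z}^{\pi}$ (a conditional expectation followed by block averaging), the same triangle-inequality argument produces a Lipschitz constant $2C$ independent of $\pi$.

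Combining the two ingredients, fix a compact $K\subset L^{2}(\mathcal{F}_T)$ and $\varepsilon>0$; total boundedness of $K$ provides a finite $\varepsilon/(4C)$-net $\{\xi_1,\dots,\xi_N\}\subset K$, and by pointwise convergence one picks $m_0$ so that $(\mathcal{R}^{Y}(\xi_j,\pi_m))^{1/2}$ and $(\mathcal{R}^{Z}(\xi_j,\pi_m))^{1/2}$ are smaller than $\varepsilon/2$ for all $j$ and all $m\ge m_0$; the Lipschitz bound then propagates this to arbitrary $\xi\in K$, yielding the uniform estimate. The hard part of the whole plan is the $L^{2}$-regularity of $Z$: unlike $Y$, the process $Z$ need not have continuous paths, so $\mathcal{R}^{Z}(\xi,\pi_m)\to 0$ cannot simply be read off from path regularity, and I would invoke this as a standard BSDE fact rather than reprove it. I also note that the nested-partition hypothesis is not actually needed by this argument beyond $|\pi_m|\to 0$; interpreted as refinement, it does make $\mathcal{R}^{Z}(\xi,\pi_m)$ monotonically decreasing in $m$ (because $\bar{Z}^{\pi}$ is the orthogonal projection onto an increasing chain of subspaces of $\mathbb{H}_{T}^{2}$), which would permit an alternative proof of the $Z$ part via Dini's theorem.
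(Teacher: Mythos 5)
Your proof is correct, but it reaches both conclusions by routes genuinely different from the paper's. For the pointwise convergence of $\mathcal{R}^{Y}$, the paper estimates $\mathcal{Y}_{t}(\xi)-\mathcal{Y}_{t_{i}}(\xi)$ directly from the equation via H\"older, Burkholder--Davis--Gundy and the a priori bounds, reducing the matter to $\max_{i}\mathbb{E}\big[\int_{t_{i}}^{t_{i+1}}|\mathcal{Z}_{s}(\xi)|^{2}ds\big]\to 0$; your argument via a.s.\ uniform continuity of the paths of $Y\in\mathbb{S}_{T}^{2}$ dominated by $4\sup_{t}|Y_{t}|^{2}$ is more elementary and equally valid. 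For $\mathcal{R}^{Z}$ the paper gives a short self-contained argument (the projection property of $\overline{\mathcal{Z}}_{i}^{\pi}$ plus density of adapted step processes in $\mathbb{H}_{T}^{2}$) rather than a citation; note that the $L^{2}$-regularity results of \cite{zhang_bsde_method} and \cite{BouchardBruno2004DaaM} prove a \emph{rate} under structural assumptions on $\xi$, not the mere convergence for arbitrary $\xi\in L^{2}(\mathcal{F}_{T})$, so if you invoke a reference here you should invoke the density-of-step-processes fact (which your own closing remark shows you have in mind) rather than those theorems. The largest divergence is in the uniform-on-compacts step: the paper argues via monotonicity of $\mathcal{R}^{Y}(\cdot,\pi_{m}),\mathcal{R}^{Z}(\cdot,\pi_{m})$ along nested partitions together with Dini's theorem, whereas you establish a Lipschitz bound for $\xi\mapsto(\mathcal{R}^{Y}(\xi,\pi))^{1/2}$ and $\xi\mapsto(\mathcal{R}^{Z}(\xi,\pi))^{1/2}$ that is \emph{uniform in $\pi$} (correctly, via Minkowski and Corollary \ref{corollary 1 a priori bounds}, and via the contractivity of $Z\mapsto\overline{Z}^{\pi}$) and then use a finite $\varepsilon$-net. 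Your equicontinuity-plus-net argument buys something real: it dispenses with the nested-partition hypothesis altogether, and it avoids having to verify the monotonicity of $\mathcal{R}^{Y}$ under refinement, which is delicate because refining the partition also moves the reference points $t_{i}$ (only the $\mathcal{R}^{Z}$ monotonicity is immediate, from the increasing chain of projection subspaces, exactly as you observe). The paper's Dini route is shorter to state but strictly less general.
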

\begin{proof}
Let us write $\pi \coloneqq \{t_{0} = 0 < t_{1} < \dots < t_{n} = T\}$ for a generic partition of $[0,T]$. We start by proving the pointwise convergence of $\mathcal{R}^{Y}$. First of all, we have that 
    \begin{gather*}
        \mathcal{Y}_{t}(\xi) - \mathcal{Y}_{t_{i}}(\xi) = \int_{t_{i}}^{t} g(s, \mathcal{Y}_{s}(\xi), \mathcal{Z}_{s}(\xi)) ds - \int_{t_{i}}^{t} \mathcal{Z}_{s}(\xi) \cdot dB_{s}. 
    \end{gather*}
    Using Hölder's and Burkholder-Davis-Gundy inequalities, together with the linear growth assumption on $g$, we easily obtain
    \begin{flalign*}
        \mathbb{E}\Big[ \sup_{t \in [t_{i}, t_{i+1}]} |\mathcal{Y}_{t}(\xi) - \mathcal{Y}_{t_{i}}(\xi) |^{2} \Big] \leq & C \Big\{ |\Delta t_{i}|^{2} \Big(1  +  \mathbb{E} \Big[ \sup_{t \in [t_{i}, t_{i+1}]} |\mathcal{Y}_{t}(\xi)|^{2} \Big] \Big) \\ &+ \mathbb{E} \Big[ \int_{t_{i}}^{t_{i+1}} |\mathcal{Z}_{s}(\xi)|^{2} ds \Big] \Big\},
    \end{flalign*}
    where $C$ is a constant independent of $\xi$ and $\pi$. The a priori estimates lead to 
    \begin{gather*}
        \mathcal{R}^{Y}(\xi, \pi) \leq C \Big\{  |\pi|^{2} \big( 1 + \mathbb{E} | \xi|^{2} \big) + \max_{i=0, \dots, n-1} \mathbb{E} \Big[ \int_{t_{i}}^{t_{i+1}} |\mathcal{Z}_{s}(\xi)|^{2} ds \Big] \Big\}.
    \end{gather*}
    Let us write now $\pi_{m} \coloneqq \{t_{0}^{m} = 0 < t_{1}^{m} < \dots < t_{n(m)}^{m} = T\}$ for a sequence of partitions such that $|\pi_{m}| \to 0$. One can then prove that the last term converges to zero by setting $\{k_{m}\}_{m \geq 1}$ to be the sequence of integers such that 
    \begin{gather*}
       \mathbb{E} \Big[ \int_{t_{k_{m}}^{m}}^{t_{k_{m}+1}^{m}} |\mathcal{Z}_{s}(\xi)|^{2} ds \Big] =  \max_{i=0, \dots, n(m)-1} \mathbb{E} \Big[ \int_{t_{i}^{m}}^{t_{i+1}^{m}} |\mathcal{Z}_{s}(\xi)|^{2} ds \Big] ,
    \end{gather*}
    and then apply the dominated convergence theorem to interchange limit and expectation. 
    
    In order to prove that $\mathcal{R}^{Z}$ converges pointwise to zero, one can use the fact that $\overline{\mathcal{Z}}_{i}^{\pi}(\xi)$ is the best approximation for $\mathcal{Z}(\xi)$ on $[t_{i}, t_{i+1}]$ in the following sense:
    \begin{gather*}
        \mathbb{E} \Big[ \int_{t_{i}}^{t_{i+1}} | \mathcal{Z}_{t}(\xi) - \overline{\mathcal{Z}}_{t_{i}}^{\pi}(\xi) |^{2} dt \Big] \leq \mathbb{E} \Big[ \int_{t_{i}}^{t_{i+1}} | \mathcal{Z}_{t}(\xi)- \eta |^{2} dt \Big], \quad \text{for any $\eta \in L^{2}(\mathcal{F}_{t_{i}})$.}
    \end{gather*}
    By setting $\eta = \mathcal{Z}_{t_{i}}(\xi)$ and the well-known fact that such piecewise constant process converges to $\mathcal{Z}(\xi)$ in the $\mathbb{H}_{T}^{2}$ norm, we can conclude.   
    
    For the uniform convergence on compacts we need the hypothesis of $\pi_{m} \subset \pi_{m+1}$, which implies that the maps $\mathcal{R}^{Y}(\cdot, \pi_{m}) \colon K \subset L^{2}(\mathcal{F}_{T}) \to \mathbb{R}$ are monotonically decreasing, meaning that $\mathcal{R}^{Y}(\cdot, \pi_{m+1}) \leq \mathcal{R}^{Y}(\cdot, \pi_{m})$ pointwise. Since these are clearly continuous mappings in $\xi$, we are then able to apply Dini's Theorem, concluding the proof.
\end{proof}

We are now ready to present the main result of this section, which provides an estimate for the approximation error of the Operator Euler scheme. This estimate is expressed in terms of $|\pi|$, $\mathcal{R}^{Y}$ and $\mathcal{R}^{Z}$, and the error is defined (pointwise in $\xi$) as
  \begin{flalign*}
    \varepsilon(\xi, \pi) \coloneqq \max_{0 \leq i \leq n-1} \mathbb{E} \Big[ \sup_{t \in [t_{i}, t_{i+1}]} |\mathcal{Y}_{t}(\xi) - \mathcal{Y}_{i}^{\pi}(\xi)|^{2} \Big] + \mathbb{E} \Big[ \sum_{i=0}^{n-1} \int_{t_{i}}^{t_{i+1}} \abs{\mathcal{Z}_{t}(\xi) - \mathcal{Z}_{i}^{\pi}(\xi) }^{2} dt  \Big] .
\end{flalign*}

\begin{theorem}\label{convergence Euler}
  Assume that $g$ is uniformly $\frac{1}{2}$-Hölder continuous in $t$ with Hölder constant $[g]_{L}$. Then there exists a constant $C > 0$, depending only on $T, [g]_{L}$ and $d$, such that
  \begin{flalign}\label{error estimates classical}
    \varepsilon(\xi, \pi) \leq C \Big( |\pi| +\mathcal{R}^{Y}(\xi, \pi) + \mathcal{R}^{Z}(\xi, \pi) \Big).
\end{flalign}
\end{theorem}

\begin{remark}
    The estimate given in (\ref{error estimates classical}) for each fixed $\xi$ follows using the same arguments as the ones found in \cite[Theorem 3.1]{zhang_bsde_method} or \cite[Theorem 5.3.3]{zhang_bsde_method}, where the terminal condition is taken to be a Lipschitz function (or functional) of a forward diffusion in order to control $\mathcal{R}^{Y}(\xi, \pi)$ and $\mathcal{R}^{Z}(\xi, \pi)$. 

Although this inequality is used without proof in \cite{HuYaozhong2011MCFB}, we have found no reference that either states or proves this result in full generality. For completeness, we therefore include a self‐contained proof below.
\end{remark}

The proof relies on the following inequality.

\begin{lemma}\label{technical lemma}
For $i=0, \dots, n-1$, we have that there exists a constant $C>0$, depending only on $T, [g]_{L}$ and $d$, such that
    \begin{flalign}
     \mathbb{E} \Big[ | \Delta \mathcal{Y}_{i}^{\pi}(\xi) |^{2} + & \frac{\Delta t_{i}}{2} |\Delta \mathcal{Z}_{i}^{\pi}(\xi) |^{2} \Big] \leq (1+C \Delta t_{i}) \mathbb{E} | \Delta \mathcal{Y}_{i+1}^{\pi}(\xi) |^{2} \notag \\ & + C \Big\{ \Delta t_{i} \mathcal{R}^{Y}(\xi, \pi) + |\Delta t_{i}|^{2} +  \mathbb{E} \Big[ \int_{t_{i}}^{t_{i+1}} |\mathcal{Z}_{t}(\xi) - \overline{\mathcal{Z}}_{i}^{\pi}(\xi)|^{2} dt \Big] \Big\}, \label{useful inequality}
\end{flalign} 
where
\begin{gather*}
    \Delta \mathcal{Y}_{i}^{\pi}(\xi) \coloneqq (\mathcal{Y}_{i}^{\pi} - \mathcal{Y}_{t_{i}})(\xi), \quad \Delta \mathcal{Z}_{i}^{\pi}(\xi) \coloneqq (\mathcal{Z}_{i}^{\pi} - \overline{\mathcal{Z}}_{i}^{\pi})(\xi).
\end{gather*}
\end{lemma}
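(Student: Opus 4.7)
The plan is to adapt the standard one-step error analysis for the backward Euler scheme (analogous to the proof of Theorem 5.3.3 in \cite{ZhangJianfeng2017BSDE}) to the operator setting. The structural ingredients are a continuous BSDE interpolation of the Euler update on $[t_i, t_{i+1}]$ obtained through the Brownian martingale representation, and It\^o's formula applied to the squared error process; Jensen's inequality then ties the $Y$ and $Z$ sides of the estimate together. Throughout, I drop the dependence on $\xi$, which is fixed.

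I would first apply the martingale representation theorem to $\mathcal{Y}_{i+1}^{\pi} - \mathbb{E}_{t_i}[\mathcal{Y}_{i+1}^{\pi}]$ to obtain a predictable $\zeta^{i}$ on $[t_i,t_{i+1}]$ with $\mathcal{Y}_{i+1}^{\pi} = \mathbb{E}_{t_i}[\mathcal{Y}_{i+1}^{\pi}] + \int_{t_i}^{t_{i+1}} \zeta_{s}^{i} \cdot dB_{s}$. Multiplying by $\Delta B_{i}$ and using It\^o's isometry identifies $\mathcal{Z}_{i}^{\pi} = \tfrac{1}{\Delta t_{i}}\mathbb{E}_{t_i}\bigl[\int \zeta_{s}^{i}\,ds\bigr]$, hence $\Delta \mathcal{Z}_{i}^{\pi} = \tfrac{1}{\Delta t_{i}}\mathbb{E}_{t_i}\bigl[\int (\zeta_{s}^{i}-\mathcal{Z}_{s})\,ds\bigr]$. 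Define the continuous interpolation $\tilde{\mathcal{Y}}_{s} := \mathcal{Y}_{i}^{\pi} - (s - t_{i})\, g(t_i, \mathcal{Y}_{i}^{\pi}, \mathcal{Z}_{i}^{\pi}) + \int_{t_i}^{s} \zeta_{r}^{i} \cdot dB_{r}$, so that $\tilde{\mathcal{Y}}_{t_i} = \mathcal{Y}_{i}^{\pi}$ and $\tilde{\mathcal{Y}}_{t_{i+1}} = \mathcal{Y}_{i+1}^{\pi}$. Setting $\delta Y_{s} := \tilde{\mathcal{Y}}_{s} - \mathcal{Y}_{s}$, $\delta Z_{s} := \zeta_{s}^{i} - \mathcal{Z}_{s}$, It\^o's formula applied to $|\delta Y_{s}|^{2}$ on $[t_i,t_{i+1}]$ yields, after taking expectations, the balance identity
\begin{equation*}
\mathbb{E}|\Delta\mathcal{Y}_{i}^{\pi}|^{2} + \mathbb{E}\!\int_{t_i}^{t_{i+1}}\!|\delta Z_{s}|^{2}\,ds = \mathbb{E}|\Delta\mathcal{Y}_{i+1}^{\pi}|^{2} + 2\mathbb{E}\!\int_{t_i}^{t_{i+1}}\!\delta Y_{s}\bigl[g(t_i,\mathcal{Y}_{i}^{\pi},\mathcal{Z}_{i}^{\pi}) - g(s,\mathcal{Y}_{s},\mathcal{Z}_{s})\bigr]ds.
\end{equation*}
Jensen's inequality applied to $\Delta\mathcal{Z}_{i}^{\pi}$ gives $\Delta t_{i}\,\mathbb{E}|\Delta\mathcal{Z}_{i}^{\pi}|^{2} \leq \mathbb{E}\int|\delta Z_{s}|^{2}\,ds$, so the left-hand side of the target inequality is controlled by the left-hand side of the identity above.

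It then remains to control the cross term. Writing $g_{\mathrm{err}}(s) := g(t_i,\mathcal{Y}_{i}^{\pi},\mathcal{Z}_{i}^{\pi}) - g(s,\mathcal{Y}_{s},\mathcal{Z}_{s})$ and splitting it as $[g(t_i,\mathcal{Y}_{i}^{\pi},\mathcal{Z}_{i}^{\pi}) - g(s,\mathcal{Y}_{s},\mathcal{Z}_{i}^{\pi})] + [g(s,\mathcal{Y}_{s},\mathcal{Z}_{i}^{\pi}) - g(s,\mathcal{Y}_{s},\mathcal{Z}_{s})]$, I would use $\tfrac{1}{2}$-H\"older/Lipschitz regularity of $g$, together with $\mathcal{Y}_{i}^{\pi}-\mathcal{Y}_{s} = \Delta\mathcal{Y}_{i}^{\pi} + (\mathcal{Y}_{t_i}-\mathcal{Y}_{s})$ and $\mathcal{Z}_{i}^{\pi}-\mathcal{Z}_{s} = \Delta\mathcal{Z}_{i}^{\pi} + (\overline{\mathcal{Z}}_{i}^{\pi}-\mathcal{Z}_{s})$, to obtain
\begin{equation*}
\mathbb{E}\!\int_{t_i}^{t_{i+1}}\!|g_{\mathrm{err}}|^{2}\,ds \leq C\bigl[(\Delta t_{i})^{2} + \Delta t_{i}\,\mathcal{R}^{Y}(\xi,\pi) + \mathbb{E}\!\int_{t_i}^{t_{i+1}}\!|\mathcal{Z}_{s}-\overline{\mathcal{Z}}_{i}^{\pi}|^{2}\,ds + \Delta t_{i}\,\mathbb{E}|\Delta\mathcal{Y}_{i}^{\pi}|^{2} + \Delta t_{i}\,\mathbb{E}|\Delta\mathcal{Z}_{i}^{\pi}|^{2}\bigr].
\end{equation*}
Since $\delta Y$ satisfies a linear BSDE with terminal $\Delta\mathcal{Y}_{i+1}^{\pi}$ and external forcing $-g_{\mathrm{err}}$, a direct conditional-expectation estimate gives $\mathbb{E}\int|\delta Y_{s}|^{2}\,ds \leq C\Delta t_{i}\,\mathbb{E}|\Delta\mathcal{Y}_{i+1}^{\pi}|^{2} + O((\Delta t_{i})^{2})\cdot \mathbb{E}\int|g_{\mathrm{err}}|^{2}\,ds$. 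Applying Young's inequality $2|\delta Y_{s}||g_{\mathrm{err}}| \leq \alpha|\delta Y_{s}|^{2} + \alpha^{-1}|g_{\mathrm{err}}|^{2}$ with a fixed constant $\alpha$ depending only on $[g]_{L}$ and combining with the two bounds, the $\alpha\,\mathbb{E}\int|\delta Y_{s}|^{2}\,ds$ piece produces the required $C\Delta t_{i}\,\mathbb{E}|\Delta\mathcal{Y}_{i+1}^{\pi}|^{2}$ contribution, while the $\alpha^{-1}\mathbb{E}\int|g_{\mathrm{err}}|^{2}\,ds$ piece delivers the target error terms plus residuals $C\Delta t_{i}\,\mathbb{E}|\Delta\mathcal{Y}_{i}^{\pi}|^{2} + C\Delta t_{i}\,\mathbb{E}|\Delta\mathcal{Z}_{i}^{\pi}|^{2}$ to be absorbed into the LHS.

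The main obstacle is the coordinated choice of the Young constant. Obtaining the coefficient $(1 + C\Delta t_{i})$, rather than an unconstrained constant, in front of $\mathbb{E}|\Delta\mathcal{Y}_{i+1}^{\pi}|^{2}$ forces $\alpha$ to be a fixed constant, so that $\alpha\,\mathbb{E}\int|\delta Y_{s}|^{2}\,ds$ stays of order $\Delta t_{i}$; simultaneously, $\alpha^{-1}$ must be small enough that the residual $\alpha^{-1}\mathbb{E}[\Delta t_{i}|\Delta\mathcal{Z}_{i}^{\pi}|^{2}]$ has coefficient strictly less than $1$ when moved back to the LHS. Taking $\alpha$ as a sufficiently large multiple of $[g]_{L}^{2}$, and exploiting the standing assumption $|\pi| < [g]_{L}^{-1}$ (inherited from well-posedness of the implicit scheme), these two constraints become compatible and the absorption closes the argument, yielding the claim.
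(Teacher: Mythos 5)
Your proposal is correct and follows the same overall architecture as the paper's proof: martingale representation on $[t_i,t_{i+1}]$, identification of $\mathcal{Z}_i^{\pi}$ as the time average of the representing integrand, conditional Cauchy--Schwarz/Jensen to control $\Delta t_i\,\mathbb{E}|\Delta\mathcal{Z}_i^{\pi}|^2$ by $\mathbb{E}\int|\delta Z_s|^2\,ds$, the Lipschitz-plus-H\"older splitting of the generator increment, and a tuned Young parameter to absorb the residual $\Delta\mathcal{Y}_i^{\pi}$ and $\Delta\mathcal{Z}_i^{\pi}$ terms. The one genuine difference is how you obtain the energy identity: the paper writes the discrete relation $\Delta\mathcal{Y}_{i+1}^{\pi}+\int_{t_i}^{t_{i+1}} I_t\,dt=\Delta\mathcal{Y}_i^{\pi}+\int_{t_i}^{t_{i+1}}\Delta\overline{\mathcal{Z}}_t^{\pi}\,dB_t$, squares it, uses orthogonality of the martingale part, and then applies Young plus Cauchy--Schwarz directly to $\bigl(\int I_t\,dt\bigr)^2\le\Delta t_i\int|I_t|^2\,dt$; you instead build a continuous interpolation $\tilde{\mathcal{Y}}$ and run It\^o's formula on $|\delta Y_s|^2$, which produces the cross term $2\mathbb{E}\int\delta Y_s\,g_{\mathrm{err}}(s)\,ds$ and therefore requires the extra (correct) estimate $\mathbb{E}\int|\delta Y_s|^2\,ds\le C\Delta t_i\,\mathbb{E}|\Delta\mathcal{Y}_{i+1}^{\pi}|^2+C(\Delta t_i)^2\,\mathbb{E}\int|g_{\mathrm{err}}|^2\,ds$ before Young's inequality can be applied. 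Both are standard, equivalent variants; the paper's route is marginally shorter because the Cauchy--Schwarz step replaces your bound on $\mathbb{E}\int|\delta Y_s|^2\,ds$. One small caveat, which applies equally to the paper's own derivation: after absorbing the residual $\alpha^{-1}C_0\,\Delta t_i\,\mathbb{E}|\Delta\mathcal{Z}_i^{\pi}|^2$ into the left-hand side, the coefficient of $\Delta t_i\,\mathbb{E}|\Delta\mathcal{Z}_i^{\pi}|^2$ is a fixed constant in $(0,1)$ rather than $1$, and restoring it to $1$ by division would inflate the $(1+C\Delta t_i)$ factor in front of $\mathbb{E}|\Delta\mathcal{Y}_{i+1}^{\pi}|^2$; this is harmless for the subsequent Gronwall and telescoping arguments, and you flag the tension explicitly, so your write-up is at the same level of rigor as the paper's.
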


\begin{proof}
    We assume for notation simplicity that $d=1$. By the Martingale Representation Theorem, we have that for each $\xi \in L^{2}(\mathcal{F}_{T})$, there exists a process $(\widehat{\mathcal{Z}}_{t}^{\pi}(\xi))_{t \in [t_{i}, t_{i+1}]}$ such that
\begin{gather}\label{mrt}
    \mathcal{Y}_{i+1}^{\pi}(\xi) = \mathbb{E}_{t_{i}} \big[ \mathcal{Y}_{i+1}^{\pi}(\xi) \big] + \int_{t_{i}}^{t_{i+1}} \widehat{\mathcal{Z}}_{t}^{\pi}(\xi) dB_{t} . 
\end{gather}
Isolating $\mathbb{E}_{t_{i}} [ \mathcal{Y}_{i+1}^{\pi} (\xi) ] $ and plugging it into the recursive formula for $\mathcal{Y}_{i}^{\pi}(\xi)$ yields 
\begin{flalign}
    \mathcal{Y}_{i}^{\pi}(\xi) &= \mathbb{E}_{t_{i}} [ \mathcal{Y}_{i+1}^{\pi}(\xi) ] + \Delta t_{i} g(t_{i}, \mathcal{Y}_{i}^{\pi}(\xi), \mathcal{Z}_{i}^{\pi}(\xi)) \notag \\ & = \mathcal{Y}_{i+1}^{\pi}(\xi) + \Delta t_{i} g(t_{i}, \mathcal{Y}_{i}^{\pi}(\xi), \mathcal{Z}_{i}^{\pi}(\xi)) - \int_{t_{i}}^{t_{i+1}} \widehat{\mathcal{Z}}_{t}^{\pi}(\xi) dB_{t}. \label{formula for Y}
\end{flalign}
Then, using the definition of $\mathcal{Z}_{i}^{\pi}(\xi)$, (\ref{mrt}) and Itô's isometry we arrive at
\begin{flalign}
    \mathcal{Z}_{i}^{\pi}(\xi) &= \frac{1}{\Delta t_{i}} \mathbb{E}_{t_{i}} \big[ \mathcal{Y}_{i+1}^{\pi}(\xi) \Delta B_{i} \big] \notag = \frac{1}{\Delta t_{i}} \mathbb{E}_{t_{i}} \Big[\int_{t_{i}}^{t_{i+1}} \widehat{\mathcal{Z}}_{t}^{\pi}(\xi) dB_{t} \Delta B_{i} \Big] \\ &= \frac{1}{\Delta t_{i} } \mathbb{E}_{t_{i}} \Big[\int_{t_{i}}^{t_{i+1}} \widehat{\mathcal{Z}}_{t}^{\pi}(\xi) dt \Big]. \label{Z_t}
\end{flalign}
Next, using the definition of $\overline{\mathcal{Z}}_{i}^{\pi}(\xi)$ and (\ref{Z_t}), we have 
\begin{flalign}
    \mathbb{E} | \Delta \mathcal{Z}_{i}^{\pi}(\xi) |^{2} &= \mathbb{E} | \mathcal{Z}_{i}^{\pi} (\xi) - \overline{\mathcal{Z}}_{i}^{\pi} (\xi) |^{2} = \mathbb{E} \abs{\mathcal{Z}_{i}^{\pi}(\xi) - \frac{1}{\Delta t_{i}} \mathbb{E}_{t_{i}} \Big[ \int_{t_{i}}^{t_{i+1}} \mathcal{Z}_{t}(\xi) dt \Big] }^{2} \notag \\ &= \notag \frac{1}{|\Delta t_{i}|^{2}} \mathbb{E}  \abs{ \mathbb{E}_{t_{i}} \Big[ \int_{t_{i}}^{t_{i+1}} (\widehat{\mathcal{Z}}_{t}^{\pi}(\xi) - \mathcal{Z}_{t}(\xi)) dt \Big] }^{2} \\ &\leq \frac{1}{\Delta t_{i}} \mathbb{E}  \Big[  \int_{t_{i}}^{t_{i+1}} |\widehat{\mathcal{Z}}_{t}^{\pi}(\xi) - \mathcal{Z}_{t}(\xi)|^{2} dt  \Big] \eqqcolon \frac{1}{\Delta t_{i}} \mathbb{E}  \Big[  \int_{t_{i}}^{t_{i+1}} |\Delta \overline{\mathcal{Z}}_{t}^{\pi}(\xi)|^{2} dt  \Big] ,\label{inequality Z}
\end{flalign}
where we used the conditional Cauchy-Schwarz inequality and the law of iterated conditional expectations. We also set $\Delta \overline{\mathcal{Z}}_{t}^{\pi}(\xi) \coloneqq \widehat{\mathcal{Z}}_{t}^{\pi}(\xi) - \mathcal{Z}_{t}(\xi)$.

Using (\ref{formula for Y}) we have 
\begin{gather}\label{isolation}
    \Delta \mathcal{Y}_{i}^{\pi}(\xi) = \Delta \mathcal{Y}_{i+1}^{\pi}(\xi) + \int_{t_{i}}^{t_{i+1}} I_{t}(\xi) dt - \int_{t_{i}}^{t_{i+1}} \Delta \overline{\mathcal{Z}}_{t}^{\pi}(\xi) dB_{t}, 
\end{gather}
where $I_{t}(\xi) \coloneqq g(t_{i}, \mathcal{Y}_{i}^{\pi}(\xi), \mathcal{Z}_{i}^{\pi}(\xi)) - g(t, \mathcal{Y}_{t}(\xi), \mathcal{Z}_{t}(\xi))$. Squaring and taking the expectation to the previous expression, together with the equality we obtain isolating $\Delta \mathcal{Y}_{i+1}^{\pi}(\xi) + \int_{t_{i}}^{t_{i+1}} I_{t}(\xi) dt$ in (\ref{isolation}) and the law of iterated conditional expectations, one can see that 
\begin{gather}\label{useful equality Y}
    \mathbb{E} | \Delta \mathcal{Y}_{i}^{\pi}(\xi) |^{2} = \mathbb{E} \Big[ \Big( \Delta \mathcal{Y}_{i+1}^{\pi}(\xi)  + \int_{t_{i}}^{t_{i+1}} I_{t}(\xi) dt \Big)^{2} - \Big( \int_{t_{i}}^{t_{i+1}} \Delta \overline{\mathcal{Z}}_{t}^{\pi}(\xi) dB_{t} \Big)^{2} \Big].
\end{gather}
Using (\ref{inequality Z}), (\ref{useful equality Y}) and Young's inequality $(a+b)^{2} \leq (1+ \frac{\Delta t_{i}}{\epsilon}) a^{2} + (1+\frac{\epsilon}{\Delta t_{i}}) b^{2}$, we get
\begin{flalign}
    \mathbb{E} \Big[ | \Delta \mathcal{Y}_{i}^{\pi}(\xi) |^{2} + \Delta t_{i} |\Delta \mathcal{Z}_{i}^{\pi}(\xi) & |^{2} \Big] \leq  \mathbb{E} \Big[ | \Delta \mathcal{Y}_{i}^{\pi}(\xi) |^{2} + \int_{t_{i}}^{t_{i+1}} |\Delta \overline{\mathcal{Z}}_{t}^{\pi}(\xi) |^{2} dt \Big] \notag \\
    & = \mathbb{E} \Big[ \Big( \Delta \mathcal{Y}_{i+1}^{\pi}(\xi)  + \int_{t_{i}}^{t_{i+1}} I_{t}(\xi) dt \Big)^{2} \Big] \notag  \\
    & \leq \big(1+\frac{\Delta t_{i}}{\epsilon} \big) \mathbb{E} | \Delta \mathcal{Y}_{i+1}^{\pi}(\xi) |^{2} + \big(\Delta t_{i}+ \epsilon \big) \mathbb{E} \Big[ \int_{t_{i}}^{t_{i+1}} |I_{t}(\xi)|^{2} dt \Big] .\label{inequality final}
\end{flalign}
By the globally Lipschitz property of $g$ with respect to the spatial variables and the $\frac{1}{2}$-Hölder regularity with respect to the time variable we have that 
\begin{flalign*}
    | I_{t}(\xi) | &\leq   |g(t_{i}, \mathcal{Y}_{i}^{\pi}(\xi), \mathcal{Z}_{i}^{\pi}(\xi) )  -  g(t_{i}, \mathcal{Y}_{t_{i}}(\xi), \overline{\mathcal{Z}}_{i}^{\pi}(\xi) ) \big| \\& \quad + \big| g(t_{i}, \mathcal{Y}_{t_{i}}(\xi), \overline{\mathcal{Z}}_{i}^{\pi}(\xi) ) - g(t, \mathcal{Y}_{t}(\xi), \mathcal{Z}_{t}(\xi) ) \big|  
    \\ & \leq [g]_{L} \Big\{ |\Delta \mathcal{Y}_{i}^{\pi}(\xi) | + |\Delta \mathcal{Z}_{i}^{\pi}(\xi)| + \sqrt{\Delta t_{i}} + |\mathcal{Y}_{t_{i}}(\xi) - \mathcal{Y}_{t}(\xi)| + |\overline{\mathcal{Z}}_{i}^{\pi}(\xi) - \mathcal{Z}_{t}(\xi) |  \Big\}.
\end{flalign*}
Squaring and integrating the previous inequality we arrive at 
\begin{flalign*}
    \mathbb{E} \Big[ \int_{t_{i}}^{t_{i+1}} | I_{t}(\xi) |^{2} dt \Big] \leq  C_{0} \Big\{ \Delta t_{i} \mathbb{E} \Big[ |\Delta \mathcal{Y}_{i}^{\pi}(\xi) |^{2} & + |\Delta \mathcal{Z}_{i}^{\pi}(\xi) |^{2} \Big] + \Delta t_{i} \mathcal{R}^{Y}(\xi, \pi) + |\Delta t_{i}|^{2} \\ & + \mathbb{E} \Big[ \int_{t_{i}}^{t_{i+1}} |\mathcal{Z}_{t}(\xi) - \overline{\mathcal{Z}}_{i}^{\pi}(\xi)|^{2} dt \Big] \Big\}
\end{flalign*}
for some constant $C_{0}$ depending only on $[g]_{L}$. Plugging this into (\ref{inequality final}) yields
\begin{flalign*}
    \mathbb{E} \Big[ & | \Delta \mathcal{Y}_{i}^{\pi}(\xi) |^{2}  + \Delta t_{i} |\Delta \mathcal{Z}_{i}^{\pi}(\xi)  |^{2} \Big]  \leq \big(1+\frac{\Delta t_{i}}{\epsilon} \big) \mathbb{E} |  \Delta \mathcal{Y}_{i+1}^{\pi} (\xi)|^{2} 
    \\ & + C_{0} \big(\Delta t_{i} + \epsilon \big) \Delta t_{i} \mathbb{E} \Big[ |\Delta \mathcal{Y}_{i}^{\pi} (\xi)|^{2} + |\Delta \mathcal{Z}_{i}^{\pi} (\xi)|^{2} \Big] \\ &
    +C_{0} \big( \Delta t_{i} + \epsilon \big) \Big\{  \Delta t_{i}  \mathcal{R}^{Y}(\xi, \pi) + |\Delta t_{i}|^{2} + \mathbb{E} \Big[ \int_{t_{i}}^{t_{i+1}} |\mathcal{Z}_{t}(\xi) - \overline{\mathcal{Z}}_{i}^{\pi}(\xi)|^{2} dt \Big] \Big\}.
\end{flalign*}
If we set $\epsilon \coloneqq 1/4C_{0}$ and assume that $|\pi| \leq \epsilon$, so that $C_{0}(\epsilon + \Delta t_{i}) \leq \frac{1}{2}$, then 
\begin{flalign*}
    \mathbb{E} \Big[  \Big(1 - \frac{\Delta t_{i}}{2} \Big) | \Delta \mathcal{Y}_{i}^{\pi}(\xi) |^{2} & + \frac{\Delta t_{i}}{2}  |\Delta \mathcal{Z}_{i}^{\pi}(\xi)  |^{2} \Big]  \leq \big(1+C\Delta t_{i} \big) \mathbb{E} |  \Delta \mathcal{Y}_{i+1}^{\pi} (\xi)|^{2} 
    \\ &  +C \Big\{  \Delta t_{i}  \mathcal{R}^{Y}(\xi, \pi) + |\Delta t_{i}|^{2} + \mathbb{E} \Big[ \int_{t_{i}}^{t_{i+1}} |\mathcal{Z}_{t}(\xi) - \overline{\mathcal{Z}}_{i}^{\pi}(\xi)|^{2} dt \Big] \Big\}.
\end{flalign*}
Finally, dividing by $(1-\Delta t_{i})$ on both sides and using that $(1+C\Delta t_i)/(1-\frac{\Delta t_i}{2}) \leq 1 + C'\Delta t_i$, one easily arrives at (\ref{useful inequality}).
\end{proof}

\begin{proof}[Proof of Theorem \ref{convergence Euler}]
    We use a discrete version of Gronwall's inequality (see Lemma 5.4 in \cite{zhang_bsde_method}) to the inequality provided in Lemma \ref{technical lemma}, with 
    \begin{flalign*}
        a_{i} = \mathbb{E} | \Delta \mathcal{Y}_{i}^{\pi}(\xi) |^{2}, \quad b_{i} =  \Delta t_{i-1} \mathcal{R}^{Y}(\xi, \pi) + |\Delta t_{i-1}|^{2} +  \mathbb{E} \Big[ \int_{t_{i-1}}^{t_{i}} |\mathcal{Z}_{t}(\xi) - \overline{\mathcal{Z}}_{i-1}^{\pi}(\xi)|^{2} dt \Big].
    \end{flalign*}
    Since $a_{n} = 0$, this leads us to 
\begin{flalign}
    \max_{0 \leq i \leq n-1}\mathbb{E} | \Delta \mathcal{Y}_{i}^{\pi}(\xi) |^{2} &\leq C \Big\{ \mathcal{R}^{Y}(\xi, \pi) + |\pi| + \mathcal{R}^{Z}(\xi, \pi) \Big\}.  \label{inequality convergence Y}
\end{flalign}
By using that 
\begin{gather*}
    \max_{0 \leq i \leq n-1} \mathbb{E} \Big[ \sup_{t \in [t_{i}, t_{i+1}]} | \mathcal{Y}_{t}(\xi) - \mathcal{Y}_{i}^{\pi}(\xi)|^{2} \Big] \leq C \Big\{ \mathcal{R}^{Y}(\xi, \pi) + \max_{0 \leq i \leq n-1} \mathbb{E} | \mathcal{Y}_{t_{i}}(\xi) - \mathcal{Y}_{i}^{\pi}(\xi) |^{2} \Big\},
\end{gather*}
we obtain the first inequality. Now, if we sum over $i = 0, \dots, n-1$ in (\ref{useful inequality}) and take into account that $\Delta \mathcal{Y}_{n}^{\pi}(\xi) = 0$ we obtain
\begin{flalign*}
    \sum_{i=0}^{n-1} \mathbb{E} | \Delta \mathcal{Y}_{i}^{\pi}(\xi) |^{2} + \frac{1}{2}\sum_{i=0}^{n-1} \Delta t_{i} |\Delta \mathcal{Z}_{i}^{\pi}(\xi) |^{2} & \leq  \sum_{i=1}^{n-1} \mathbb{E} | \Delta \mathcal{Y}_{i}^{\pi}(\xi) |^{2}  + C \sum_{i=1}^{n-1} \Delta t_{i-1} \mathbb{E} | \Delta \mathcal{Y}_{i}^{\pi}(\xi) |^{2} \\ & + C \mathcal{R}^{Y}(\xi, \pi) + C|\pi| + C \mathcal{R}^{Z}(\xi, \pi) ,
\end{flalign*}
and thus
\begin{flalign*}
     \sum_{i=0}^{n-1} \Delta t_{i} |\Delta \mathcal{Z}_{i}^{\pi}(\xi) |^{2} \leq C \sum_{i=1}^{n-1} \Delta t_{i-1} \mathbb{E} | \Delta \mathcal{Y}_{i}^{\pi}(\xi) |^{2} + C \mathcal{R}^{Y}(\xi, \pi) + C|\pi| + C \mathcal{R}^{Z}(\xi, \pi) .
\end{flalign*}
By using (\ref{inequality convergence Y}) we arrive at 
\begin{gather*}
    \sum_{i=0}^{n-1} \Delta t_{i} |\Delta \mathcal{Z}_{i}^{\pi}(\xi) |^{2} \leq C \Big\{ \mathcal{R}^{Y}(\xi, \pi) + |\pi| + \mathcal{R}^{Z}(\xi, \pi) \Big\}.
\end{gather*}
Finally, note that 
\begin{flalign*}
    \sum_{i=0}^{n-1} \mathbb{E} \Big[ \int_{t_{i}}^{t_{i+1}} | \mathcal{Z}_{t}(\xi) - \mathcal{Z}_{i}^{\pi}(\xi) |^{2} dt \Big] & \leq C \Big\{ \sum_{i=0}^{n-1} \mathbb{E} \Big[ \int_{t_{i}}^{t_{i+1}} | \mathcal{Z}_{t}(\xi) - \overline{\mathcal{Z}}_{i}^{\pi}(\xi) |^{2} dt \Big] \\ & + \sum_{i=0}^{n-1} \mathbb{E} \Big[ \int_{t_{i}}^{t_{i+1}} | \Delta  \mathcal{Z}_{i}^{\pi} (\xi)|^{2} dt \Big] \Big\} \\
    & = C \Big\{ \mathcal{R}^{Z}(\xi, \pi) + \sum_{i=0}^{n-1} \Delta t_{i} | \Delta  \mathcal{Z}_{i}^{\pi} (\xi) |^{2} \Big\} \\& \leq C \Big\{ \mathcal{R}^{Z}(\xi, \pi) + |\pi| + \mathcal{R}^{Y}(\xi, \pi) \Big\}.
\end{flalign*}
\end{proof}

\begin{corollary}[Convergence of the operator Euler scheme for BSDEs]
    Let $(\pi_{m})_{m \geq 1}$ be a sequence of partitions of $[0,T]$ such that $|\pi_{m}| \to 0$ when $m \to \infty$. Then, under the setting of Theorem \ref{convergence Euler}, the Operator Euler scheme converges to the solution operator, meaning that for every $\xi \in L^{2}(\mathcal{F}_{T})$, 
    \begin{gather*}
        \varepsilon(\xi, \pi_{m}) \to 0 \quad \text{when $m \to \infty$.}
    \end{gather*}
    If we furthermore have that $\pi_{m+1} \subset \pi_{m}$, then the convergence is uniform on compact subsets of $L^{2}(\mathcal{F}_{T})$. That is, for any compact $K \subset L^{2}(\mathcal{F}_{T})$, we have 
    \begin{gather*}
        \sup_{ \xi \in K} \abs{\varepsilon(\xi, \pi_{m})} \to 0 \quad \text{when $m \to \infty$.}
    \end{gather*}
\end{corollary}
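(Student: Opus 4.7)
The plan is to derive the corollary directly by combining the error estimate of Theorem \ref{convergence Euler} with the preceding lemma on the asymptotics of the regularity functionals $\mathcal{R}^{Y}$ and $\mathcal{R}^{Z}$. Concretely, Theorem \ref{convergence Euler} gives, for every $\xi \in L^{2}(\mathcal{F}_{T})$ and every partition $\pi$ with $|\pi|$ small enough,
\begin{gather*}
    \varepsilon(\xi, \pi) \leq C \bigl( |\pi| + \mathcal{R}^{Y}(\xi, \pi) + \mathcal{R}^{Z}(\xi, \pi) \bigr),
\end{gather*}
with a constant $C$ that is independent of $\xi$ and $\pi$.

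For the first statement, I would fix $\xi \in L^{2}(\mathcal{F}_{T})$ and apply this bound to the sequence $\pi_{m}$. Since $|\pi_{m}| \to 0$ by hypothesis, and the previous lemma guarantees the pointwise convergence $\mathcal{R}^{Y}(\xi, \pi_{m}) \to 0$ and $\mathcal{R}^{Z}(\xi, \pi_{m}) \to 0$, each of the three terms on the right-hand side vanishes as $m \to \infty$, giving $\varepsilon(\xi, \pi_{m}) \to 0$.

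For the uniform statement on a compact set $K \subset L^{2}(\mathcal{F}_{T})$, I would take the supremum over $\xi \in K$ on both sides of the estimate from Theorem \ref{convergence Euler}, yielding
\begin{gather*}
    \sup_{\xi \in K} \varepsilon(\xi, \pi_{m}) \leq C \Bigl( |\pi_{m}| + \sup_{\xi \in K} \mathcal{R}^{Y}(\xi, \pi_{m}) + \sup_{\xi \in K} \mathcal{R}^{Z}(\xi, \pi_{m}) \Bigr).
\end{gather*}
Under the extra assumption $\pi_{m+1} \subset \pi_{m}$, the previous lemma ensures that both suprema on the right-hand side tend to zero. Together with $|\pi_{m}| \to 0$, this yields the desired uniform convergence.

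There is no real obstacle here: the entire work has already been done in Theorem \ref{convergence Euler} and in the preceding lemma. The only point worth stating explicitly is that the constant $C$ in Theorem \ref{convergence Euler} depends only on $T$, $[g]_{L}$, and $d$, so it does not interfere with either the pointwise or the uniform passage to the limit.
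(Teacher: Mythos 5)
Your proposal is correct and is exactly the argument the paper intends: the corollary is stated as an immediate consequence of Theorem \ref{convergence Euler} combined with the preceding lemma on $\mathcal{R}^{Y}$ and $\mathcal{R}^{Z}$, with the $\xi$- and $\pi$-independence of the constant $C$ being the only point needing mention. Your handling of both the pointwise and the uniform-on-compacts cases matches the paper's (implicit) reasoning.
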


\begin{remark}
    As we can see in Theorem \ref{convergence Euler}, the rate of convergence of the Operator Euler scheme is at most $1/2$, and is determined by the maps $\mathcal{R}^{Y}, \mathcal{R}^{Z}$. An interesting question is then under which conditions we can obtain an inequality of the type 
    \begin{gather*}
        \max\{\mathcal{R}^{Y}(\xi, \pi), \mathcal{R}^{Z}(\xi, \pi) \} \leq C |\pi|,
    \end{gather*}
    with $C>0$ possibly depending on $\xi$ but independent of $\pi$. The most well-known conditions for this are given when $\xi$ is of the form $f(X_{\cdot})$, where $f$ is a functional satisfying a Lipschitz condition with respect to the $L^{\infty}$ norm, and $X$ is the solution to a forward diffusion, see \cite{zhang_bsde_method}.

    The same result is proved in \cite{HuYaozhong2011MCFB}, under the assumptions that $\xi$ is twice differentiable in the sense of Malliavin, and the first and second derivatives satisfy some integrability conditions, as well as some additional assumptions on the regularity of the generator $g$.  
\end{remark}

\subsection{Markovian maps in the Operator Euler scheme}\label{Section: Markovian maps in the Operator Euler scheme}

Just like in the backward Euler scheme for BSDEs, one can find measurable maps that relate the operators $\mathcal{Y}_{i}^{\pi}$ and $\mathcal{Z}_{i}^{\pi}$ to $X_{t_{i}}$. 

\begin{theorem}\label{theorem markovian maps discrete case}
    For each $i \in \{0, \dots, n\}$ there exist measurable maps $\mathscr{Y}_{i}^{\pi}\colon \mathbb{R}^{\mathcal{A}} \times L^{2}(\mathcal{F}_{T}) \to \mathbb{R}$ and $\mathscr{Z}_{i}^{\pi}\colon \mathbb{R}^{\mathcal{A}} \times L^{2}(\mathcal{F}_{T}) \to \mathbb{R}^{d}$ such that 
\begin{gather*}
    \mathcal{Y}_{i}^{\pi}(\xi) = \mathscr{Y}_{i}^{\pi}(X_{t_{i}}, \xi), \quad \mathcal{Z}_{i}^{\pi}(\xi) = \mathscr{Z}_{i}^{\pi}(X_{t_{i}}, \xi), \quad \text{$\mathbb{P}$-a.s.}
\end{gather*}
Moreover, if $\xi$ belongs to $\Pi_{p,M}(L^{2}(\mathcal{F}_{T}))$, then the maps $(\mathscr{Y}_{i}^{\pi}, \mathscr{Z}_{i}^{\pi})_{0 \leq i \leq n}$ admit a finite-dimensional representation, which only depends on $X^{\leq p,M}$ and $\xi^{\leq p,M}$.
\end{theorem}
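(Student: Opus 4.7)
My plan is to prove the result by a backward induction on $i$, following the strategy used in the proof of Theorem \ref{theorem markovian infinite}: first establish the claim for terminal conditions in the finite-dimensional truncated chaos subspace $\Pi_{p,M}(L^{2}(\mathcal{F}_{T}))$ (which will also yield the second statement of the theorem directly), and then pass to the limit to obtain the representation for general $\xi \in L^{2}(\mathcal{F}_{T})$, using a stability argument together with a Carathéodory-type construction to secure joint measurability in $(x,\xi)$.

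Fix $p, M \in \mathbb{N}$ and $\xi \in \Pi_{p,M}(L^{2}(\mathcal{F}_{T}))$. By Remark \ref{remark fd sde}, $X^{\leq p,M}$ solves a classical finite-dimensional SDE with deterministic coefficients, and $\xi$ is a polynomial in $X_{T}^{\leq p,M}$. The Operator Euler scheme applied to this $\xi$ therefore reduces to the classical backward Euler scheme for a Markovian finite-dimensional FBSDE, and a standard backward induction (as in Section \ref{subsection b euler}) yields Borel measurable maps $\widetilde{\mathscr{Y}}_{i}^{\pi}, \widetilde{\mathscr{Z}}_{i}^{\pi}\colon \mathbb{R}^{\mathcal{A}_{p,M}} \to \mathbb{R} \times \mathbb{R}^{d}$ (depending on $\xi^{\leq p,M}$) such that $\mathcal{Y}_{i}^{\pi}(\xi) = \widetilde{\mathscr{Y}}_{i}^{\pi}(X_{t_{i}}^{\leq p,M})$ and $\mathcal{Z}_{i}^{\pi}(\xi) = \widetilde{\mathscr{Z}}_{i}^{\pi}(X_{t_{i}}^{\leq p,M})$, $\mathbb{P}$-a.s. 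Since the chaos coefficients $d_{a}(\xi) = \sqrt{a!}\,\langle \xi, \Phi_{a}\rangle_{L^{2}(\mathcal{F}_{T})}$ depend continuously and linearly on $\xi$ and are propagated measurably through the backward recursion, these maps are in fact jointly measurable in $(x, \xi^{\leq p,M})$, proving the second part of the theorem. For a general $\xi \in L^{2}(\mathcal{F}_{T})$, setting $u_{i,p,M}^{\pi}(x, \xi) := \widetilde{\mathscr{Y}}_{i}^{\pi}\bigl(\pi_{p,M}(x), \Pi_{p,M}(\xi)\bigr)$ then gives a sequence of jointly measurable maps on $\mathbb{R}^{\mathcal{A}} \times L^{2}(\mathcal{F}_{T})$ that approximate the solution operator.

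To pass to the limit I will use the stability of the Operator Euler scheme with respect to the terminal condition: adapting the standard discrete-time $L^{2}$ stability estimate (which is proved by the same backward induction as in Lemma \ref{technical lemma}, with the $\mathcal{R}^{Y}, \mathcal{R}^{Z}$ contributions absent), one obtains $C>0$ independent of $\pi$ such that $\norm{\mathcal{Y}_{i}^{\pi}(\xi_{1}) - \mathcal{Y}_{i}^{\pi}(\xi_{2})}_{L^{2}(\mathcal{F}_{t_{i}})}^{2} + \Delta t_{i}\norm{\mathcal{Z}_{i}^{\pi}(\xi_{1}) - \mathcal{Z}_{i}^{\pi}(\xi_{2})}_{L^{2}(\mathcal{F}_{t_{i}})}^{2} \leq C \norm{\xi_{1} - \xi_{2}}_{L^{2}(\mathcal{F}_{T})}^{2}$. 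By the image measure theorem applied to the law $\nu_{i}$ of $X_{t_{i}}$, and the fact that $\Pi_{N,N}(\xi) \to \xi$ in $L^{2}(\mathcal{F}_{T})$, the sequence $\{u_{i,N,N}^{\pi}(\cdot, \xi)\}_{N}$ is Cauchy in $L^{2}(\nu_{i})$ for each fixed $\xi$. Its $\nu_{i}$-a.e. limit, extended to all of $\mathbb{R}^{\mathcal{A}}$ by setting it to zero on the null set where it is not defined, yields a map $\mathscr{Y}_{i}^{\pi}(\cdot, \xi)$ with $\mathcal{Y}_{i}^{\pi}(\xi) = \mathscr{Y}_{i}^{\pi}(X_{t_{i}}, \xi)$ $\mathbb{P}$-a.s., and the analogous procedure yields $\mathscr{Z}_{i}^{\pi}$.

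The main obstacle will be verifying joint measurability in $(x, \xi)$ rather than the Markov representation itself: the construction above only guarantees measurability in $x$ for each fixed $\xi$. I expect to handle this by the same Carathéodory argument used at the end of Section \ref{section 3}: the stability estimate above, combined with Chebyshev's inequality and the Borel-Cantelli lemma exactly as in Proposition \ref{continuity of xi}, shows that $\xi \mapsto \mathscr{Y}_{i}^{\pi}(x, \xi)$ is continuous for $\nu_{i}$-a.e. $x$, while measurability in $x$ is built into the construction; Lemma \ref{lemma cara} then furnishes joint measurability of $\mathscr{Y}_{i}^{\pi}$, and the same reasoning applies to $\mathscr{Z}_{i}^{\pi}$, completing the proof.
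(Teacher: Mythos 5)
Your argument is essentially correct, but it takes a genuinely different route from the paper. The paper proves this theorem by a single backward induction on $i$ carried out directly for the infinite-dimensional process: the base case uses the jointly measurable map $\mathscr{Y}(t_{n},\cdot,\cdot)$ from Section~\ref{section 3}, and the inductive step combines Lemma~\ref{markov property X infinite dimensional} (which writes $X_{t_{k+1}}=\varphi(X_{t_{k}},\{B_{u}-B_{t_{k}}\})$) with the freezing lemma to produce $\mathscr{Z}_{k}^{\pi}$, and then Picard iteration (to resolve the implicit definition of $\mathcal{Y}_{k}^{\pi}$) plus the freezing lemma again to produce $\mathscr{Y}_{k}^{\pi}$; joint measurability in $(x,\xi)$ is preserved at each step, so no limiting procedure is needed. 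You instead first settle the truncated case $\xi\in\Pi_{p,M}(L^{2}(\mathcal{F}_{T}))$ via classical finite-dimensional Markovian Euler theory, then pass to general $\xi$ through a discrete $L^{2}$-stability estimate and a Cauchy-sequence argument in $L^{2}(\nu_{i})$, and recover joint measurability by the Carath\'eodory route of Proposition~\ref{continuity of xi}. Your approach buys the second statement of the theorem for free as the base case and reuses machinery the paper develops anyway (the pattern of Theorem~\ref{theorem markovian infinite} and the stability inequality that reappears in Section~\ref{section finite-dimensional approximation}); the paper's approach is shorter for the first statement and avoids any limit passage. Two points you should tighten: (i) the joint measurability of the truncated maps in $(x,\xi^{\leq p,M})$ deserves an explicit argument --- the clean way is to adjoin the coefficient vector $(d_{a})_{a\in\mathcal{A}_{p,M}}$ as a constant component of the forward state so that the classical result applies to the augmented Markov chain; and (ii) in the Carath\'eodory step the exceptional $\nu_{i}$-null set produced by Chebyshev and Borel--Cantelli a priori depends on the pair $(\xi_{1},\xi_{2})$, so you must run the argument over a countable dense subset of $L^{2}(\mathcal{F}_{T})$ and use the Lipschitz stability to extend --- the same care is needed in Proposition~\ref{continuity of xi}, so this is inherited rather than a new gap. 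Finally, note that your stability estimate requires $|\pi|$ small (of order $[g]_{L}^{-1}$), which is already assumed for the implicit scheme to be well defined.
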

\begin{proof}
    We proceed by backward induction. For $i=n$, we have that $\mathscr{Y}_{n}^{\pi}(x, \xi) \coloneqq \mathscr{Y}(t_{n}, x, \xi)$ and $\mathscr{Z}_{n}^{\pi}(x, \xi) \coloneqq 0$, which are clearly measurable. 
    
    Assume now that this holds for $i=k+1$. That is, $\mathcal{Y}_{k+1}^{\pi}(\xi) = \mathscr{Y}_{k+1}^{\pi}(X_{t_{k+1}}, \xi)$ and $\mathcal{Z}_{k+1}^{\pi}(\xi) = \mathscr{Z}_{k+1}^{\pi}(X_{t_{k+1}}, \xi)$ $\mathbb{P}$-a.s., with both maps measurable. Then 
\begin{flalign*}
    \mathcal{Z}_{k}^{\pi}(\xi) = \frac{1}{\Delta t_{k}} \mathbb{E}_{t_{k}} \Big[\mathscr{Y}_{k+1}^{\pi}(X_{t_{k+1}}, \xi) \Delta B_{k} \Big].
\end{flalign*}
By Lemma \ref{markov property X infinite dimensional} there is a measurable map $\varphi$ such that $X_{t_{k+1}} = \varphi(X_{t_{k}}, \{ B_{u} - B_{t_{k}}\}_{u \in [t_{k}, t_{k+1}] })$ $\mathbb{P}$-a.s. We therefore have that 
\begin{gather*}
    \mathcal{Z}_{k}^{\pi}(\xi) = \frac{1}{\Delta t_{k}} \mathbb{E}_{t_{k}} \Big[ \mathscr{Y}_{k+1}^{\pi} \big(\varphi(X_{t_{k}}, \{ B_{u} - B_{t_{k}}\}_{u \in [t_{k}, t_{k+1}] }), \xi \big) \Delta B_{k} \Big].
\end{gather*}
 Then, by the freezing lemma \cite[Lemma 4.1]{BaldiPaolo2017SCAI}, there exists a measurable function $\mathscr{Z}_{k}^{\pi}\colon \mathbb{R}^{\mathcal{A}} \times L^{2}(\mathcal{F}_{T}) \to \mathbb{R}^{d}$ such that $\mathcal{Z}_{k}^{\pi}(\xi) = \mathscr{Z}_{k}^{\pi}(X_{t_{k}}, \xi)$ $\mathbb{P}$-a.s. Similarly, 
\begin{gather*}
    \mathcal{Y}_{k}^{\pi}(\xi) \coloneqq \mathbb{E}_{t_{k}} \Big[ \mathscr{Y}_{k+1}^{\pi}(X_{t_{k+1}}, \xi)\Big]+ g(t_{k}, \mathcal{Y}_{k}^{\pi}(\xi), \mathscr{Z}_{k}^{\pi}(X_{t_{k}}, \xi)) \Delta t_{k} .
\end{gather*}
Now, by using Picard iterations to solve the equation, induction and the freezing lemma we conclude that $\mathcal{Y}_{k}^{\pi}(\xi) = \mathscr{Y}_{k}^{\pi}(X_{t_{k}}, \xi)$ $\mathbb{P}$-a.s. for some measurable $\mathscr{Y}_{k}^{\pi}$. 

The last statement of the theorem is proved in a similar way. 
\end{proof}

Theorem \ref{theorem markovian maps discrete case} shows that instead of directly computing the operators $\mathcal{Y}_{i}^{\pi}$ and $\mathcal{Z}_{i}^{\pi}$, we can equivalently find the functionals $\mathscr{Y}_{i}^{\pi}$ and $\mathscr{Z}_{i}^{\pi}$. By the $L^{2}$-projection rule, this can be done by setting $\mathscr{Y}_{n}^{\pi}(X_{t_{n}}, \xi) = \xi$, and defining recursively, for $i=n-1, \dots, 0$,
\begin{flalign*}
    \mathscr{Z}_{i}^{\pi}(\cdot, \xi) &\coloneqq \argmin_{V_{i}(\cdot, \xi)} \mathbb{E} \Big| V_{i}(X_{t_{i}}, \xi) - \frac{1}{\Delta t_{i}} \mathscr{Y}_{i+1}^{\pi}(X_{t_{i+1}}, \xi) \Delta B_{t_{i}} \Big|^{2}  \\
   \mathscr{Y}_{i}^{\pi}(\cdot, \xi) &\coloneqq \argmin_{U_{i}(\cdot, \xi)} \mathbb{E}  \Big| U_{i}(X_{t_{i}}, \xi) -\mathscr{Y}_{i+1}^{\pi}(X_{t_{i+1}}, \xi) - \Delta t_{i} g(t_{i}, U_{i}(X_{t_{i}}, \xi), \mathscr{Z}_{i}^{\pi}(X_{t_{i}}, \xi)) \Big|^{2} .
\end{flalign*}
Given the structure of the previous optimization problems, we can solve them by classical linear least squares regression methods. 
Instead, if we are interested in using nonlinear techniques, like deep learning based methods, then it is more efficient to solve 
\begin{flalign*}
    (\mathscr{Y}_{i}^{\pi}, \mathscr{Z}_{i}^{\pi})(\cdot, \xi) \coloneqq \argmin_{(V_{i}, U_{i})(\cdot, \xi)} & \mathbb{E} \Big| U_{i}(X_{t_{i}}, \xi) -\mathscr{Y}_{i+1}^{\pi}(X_{t_{i+1}}, \xi) \\ & - \Delta t_{i} g(t_{i}, U_{i}(X_{t_{i}}, \xi), V_{i}(X_{t_{i}}, \xi)) + V_{i}(X_{t_{i}}, \xi) \cdot \Delta B_{i} \Big|^{2}  ,
\end{flalign*}
which achieves it's minimum at $0$ (see DBDP1 in \cite{HureCome2020Dbsf}). We remark that the solution to both optimization problems coincide. 

\subsubsection{Finite-dimensional approximation}\label{section finite-dimensional approximation}

The previous theorem provides a way of approximating the solution operators at a discrete set of time points by solving some optimization problems within a space of functionals. However, whenever we intend to actually implement the method, we need to approximate these functionals by some finite-dimensional maps. 

Let us define $(\mathscr{Y}_{i}^{\pi, p, M}, \mathscr{Z}_{i}^{\pi, p, M}) \colon \mathbb{R}^{\mathcal{A}} \times L^{2}(\mathcal{F}_{T}) \to \mathbb{R} \times \mathbb{R}^{d}$ by
\begin{gather}\label{finite-dimensional maps approximation}
    \mathscr{Y}_{i}^{\pi, p, M}(x, \xi) \coloneqq \mathscr{Y}_{i}^{\pi}( x, \Pi_{p,M}(\xi)), \quad \mathscr{Z}_{i}^{\pi, p, M}(x, \xi) \coloneqq \mathscr{Z}_{i}^{\pi}( x, \Pi_{p,M}(\xi)).
\end{gather}

\begin{remark}\label{remark finite dimensional domain}
    By the second part of Theorem \ref{theorem markovian maps discrete case}, we have that the maps (\ref{finite-dimensional maps approximation}) have a finite-dimensional representation. More specifically, they are equivalent to some maps which only depend (possibly in a nonlinear way) on $X^{\leq p,M}$ and $\xi^{\leq p,M}$.
\end{remark}

The following theorem bounds the error of the above finite-dimensional maps in terms of the truncation error of the chaos expansion. 

\begin{theorem}\label{theorem error bound finite-dim appr}
    Let $p, M \in \mathbb{N}$. Then, for all $\xi \in L^{2}(\mathcal{F}_T)$, we have that
\begin{align*}
    &\max_{0 \leq i \leq n} \mathbb{E} \big|\mathscr{Y}_{i}^{\pi}(X_{t_{i}}, \xi) - \mathscr{Y}_{i}^{\pi, p, M}(X_{t_{i}}, \xi)\big|^{2} \\
    &\quad + \sum_{i=0}^{n-1} \mathbb{E} \Bigg[ \int_{t_{i}}^{t_{i+1}} \big|\mathscr{Z}_{i}^{\pi}(X_{t_{i}}, \xi) - \mathscr{Z}_{i}^{\pi, p, M}(X_{t_{i}}, \xi)\big|^{2} \, dt \Bigg] \leq C \mathbb{E} \abs{\xi - \Pi_{p, M}(\xi)}^{2},
\end{align*}
where $C>0$ is a constant depending only on $[g]_L$ and $T$.
\end{theorem}

\begin{proof}
The previous statement is equivalent to 
\begin{flalign}\notag
    \max_{0 \leq i \leq n} \mathbb{E} & |\mathcal{Y}_{i}^{\pi}(\xi) - \mathcal{Y}_{i}^{\pi, p, M}(\xi)|^{2} \\  & +  \sum_{i=0}^{n-1}  \mathbb{E} \Big[ \int_{t_{i}}^{t_{i+1}} \abs{\mathcal{Z}_{i}^{\pi}(\xi) - \mathcal{Z}_{i}^{\pi, p, M}(\xi) }^{2} dt  \Big]   \leq C \mathbb{E} \abs{\xi - \Pi_{p, M}(\xi)}^{2}, \label{main ineq theorem equiv}
\end{flalign}
where we set 
\begin{gather*}
    \mathcal{Y}_{i}^{\pi, p, M} \coloneqq \mathcal{Y}_{i}^{\pi} \circ \Pi_{p,M}, \quad \mathcal{Z}_{i}^{\pi, p, M} \coloneqq \mathcal{Z}_{i}^{\pi} \circ \Pi_{p,M}.
\end{gather*}
For ease of notation, let us fix $\xi \in K$, and let 
\begin{gather*}
    \Delta \mathcal{Y}_{i}^{\pi, p, M} \coloneqq \mathcal{Y}_{i}^{\pi}(\xi) - \mathcal{Y}_{i}^{\pi, p, M}(\xi) \quad \text{and} \quad \Delta \mathcal{Z}_{i}^{\pi, p, M} \coloneqq \mathcal{Z}_{i}^{\pi}(\xi) - \mathcal{Z}_{i}^{\pi, p, M}(\xi).
\end{gather*}
Let us first prove that there exists a constant $C'>0$, depending only on $[g]_{L}$ and $T$, such that for any $i = 0, \dots, n-1$ we have
\begin{gather}\label{inequality yz}
    \mathbb{E} \big[ |\Delta \mathcal{Y}_{i}^{\pi, p, M}|^{2} + \frac{\Delta t_{i}}{2} \mathbb{E} |\Delta \mathcal{Z}_{i}^{\pi, p, M}|^{2} \big] \leq (1+C' \Delta t_{i}) \mathbb{E} |\Delta \mathcal{Y}_{i+1}^{\pi, p, M}|^{2} .
\end{gather}
The proof follows similar arguments to those of Theorem \ref{convergence Euler}, so we will omit some details. First of all, by the Martingale Representation Theorem, there exist processes $(\widehat{\mathcal{Z}}_{t}^{\pi}), (\widehat{\mathcal{Z}}_{t}^{\pi, p, M})$ such that
\begin{gather}\label{mrt 2}
    \mathcal{Y}_{i+1}^{\pi} = \mathbb{E}_{t_{i}} \big[ \mathcal{Y}_{i+1}^{\pi} \big] + \int_{t_{i}}^{t_{i+1}} \widehat{\mathcal{Z}}_{t}^{\pi} dB_{t} , \quad \mathcal{Y}_{i+1}^{\pi, p, M} = \mathbb{E}_{t_{i}} \big[ \mathcal{Y}_{i+1}^{\pi, p, M} \big] + \int_{t_{i}}^{t_{i+1}} \widehat{\mathcal{Z}}_{t}^{\pi, p, M} dB_{t},
\end{gather}
which easily yields the recursive formulas
\begin{flalign}
    \mathcal{Y}_{i}^{\pi} &= \mathcal{Y}_{i+1}^{\pi} + \Delta t_{i} g(t_{i}, \mathcal{Y}_{i}^{\pi}, \mathcal{Z}_{i}^{\pi})  - \int_{t_{i}}^{t_{i+1}} \widehat{\mathcal{Z}}_{t}^{\pi} dB_{t}, \label{formula for Y 2 1} \\
    \mathcal{Y}_{i}^{\pi, p, M} &= \mathcal{Y}_{i+1}^{\pi, p, M} + \Delta t_{i} g(t_{i}, \mathcal{Y}_{i}^{\pi, p, M}, \mathcal{Z}_{i}^{\pi, p, M})  - \int_{t_{i}}^{t_{i+1}} \widehat{\mathcal{Z}}_{t}^{\pi, p, M} dB_{t}. \label{formula for Y 2 2}
\end{flalign}
Then, using the definition of $\mathcal{Z}_{i}^{\pi}$ and $\mathcal{Z}_{i}^{\pi, p, M}$, (\ref{mrt 2}) and Itô's isometry, we arrive at
\begin{gather}\label{Z_t_2}
    \mathcal{Z}_{i}^{\pi} = \frac{1}{\Delta t_{i} } \mathbb{E}_{t_{i}} \Big[\int_{t_{i}}^{t_{i+1}} \widehat{\mathcal{Z}}_{t}^{\pi} dt \Big], \quad \mathcal{Z}_{i}^{\pi, p, M} = \frac{1}{\Delta t_{i} } \mathbb{E}_{t_{i}} \Big[\int_{t_{i}}^{t_{i+1}} \widehat{\mathcal{Z}}_{t}^{\pi, p, M} dt \Big].
\end{gather}
Next, by (\ref{Z_t_2}), one can easily see that 
\begin{flalign}
    \mathbb{E} | \Delta \mathcal{Z}_{i}^{\pi, p, M} |^{2} \leq  \frac{1}{\Delta t_{i}} \mathbb{E}  \Big[  \int_{t_{i}}^{t_{i+1}} |\Delta \overline{\mathcal{Z}}_{t}^{\pi, p, M}|^{2} dt  \Big], \quad \Delta \overline{\mathcal{Z}}_{t}^{\pi, p, M} \coloneqq \widehat{\mathcal{Z}}_{t}^{\pi} - \widehat{\mathcal{Z}}_{t}^{\pi, p, M}. \label{inequality Z 2}
\end{flalign}
Using (\ref{formula for Y 2 1}) and (\ref{formula for Y 2 2}), we have 
\begin{gather*}
    \Delta \mathcal{Y}_{i}^{\pi, p, M} = \Delta \mathcal{Y}_{i+1}^{\pi, p, M} + \Delta t_{i} I_{i}^{p,M} - \int_{t_{i}}^{t_{i+1}} \Delta \overline{\mathcal{Z}}_{t}^{\pi, p, M} dB_{t},  
\end{gather*}
where $I_{i}^{p,M} \coloneqq g(t_{i}, \mathcal{Y}_{i}^{\pi}, \mathcal{Z}_{i}^{\pi})  - g(t_{i}, \mathcal{Y}_{i}^{\pi, p, M}, \mathcal{Z}_{i}^{\pi, p, M}) $. The previous expression yields
\begin{gather}\label{useful equality Y 2}
    \mathbb{E} | \Delta \mathcal{Y}_{i}^{\pi, p, M} |^{2} = \mathbb{E} \Big[ \Big( \Delta \mathcal{Y}_{i+1}^{\pi, p, M}  + \Delta t_{i} I_{i}^{p,M} \Big)^{2} - \int_{t_{i}}^{t_{i+1}} | \Delta \overline{\mathcal{Z}}_{t}^{\pi, p, M}|^{2} dt \Big].
\end{gather}
The inequality (\ref{inequality Z 2}), equation (\ref{useful equality Y 2}), and Young's inequality $(a+b)^{2} \leq (1+ \frac{\Delta t_{i}}{\gamma}) a^{2} + (1+\frac{\gamma}{\Delta t_{i}}) b^{2}$, provide
\begin{flalign}
    \mathbb{E} \Big[ | \Delta \mathcal{Y}_{i}^{\pi, p, M} |^{2} + \Delta t_{i} |\Delta \mathcal{Z}_{i}^{\pi, p, M} |^{2} \Big] \leq \big(1+\frac{\Delta t_{i}}{\gamma} \big) \mathbb{E} | \Delta \mathcal{Y}_{i+1}^{\pi, p, M} |^{2} + \big(\Delta t_{i}+ \gamma \big) \Delta t_{i} \mathbb{E} | I_{i}^{p,M} |^{2} .\label{inequality final 2}
\end{flalign}
Observe that, by the globally Lipschitz property of $g$ with respect to the spatial variables, we have that 
\begin{flalign*}
    E| I_{i}^{p,M} |^{2} \leq C_{0} \Big\{ \mathbb{E} | \Delta \mathcal{Y}_{i}^{\pi, p, M} |^{2} + \mathbb{E} | \Delta \mathcal{Z}_{i}^{\pi, p, M} |^{2} \Big\}, 
\end{flalign*}
for some $C_{0}$ depending only on $[g]_{L}$. Plugging this into (\ref{inequality final 2}), choosing $\gamma = 1/4C_{0}$ and setting $|\pi| \leq \gamma$, so that $C_{0}(\gamma + \Delta t_{i}) \leq \frac{1}{2}$, we obtain the desired inequality (\ref{inequality yz}). 

We now proceed to prove (\ref{main ineq theorem equiv}). By the discrete version of Gronwall´s inequality \cite[Lemma 5.4]{zhang_bsde_method}, we get 
\begin{gather}\label{result fa Y}
    \max_{0 \leq i \leq n} \mathbb{E} |\Delta \mathcal{Y}_{i}^{\pi,p,M} |^{2} \leq C \mathbb{E} \abs{\xi - \Pi_{p, M}(\xi)}^{2}.
\end{gather}
Now, if we sum over $i=0, \dots, n-1$ in (\ref{inequality yz}), we arrive at 
\begin{gather*}
    \sum_{i=0}^{n-1} \frac{\Delta t_{i}}{2} |\Delta \mathcal{Z}_{i}^{\pi,p,M} |^{2} \leq \mathbb{E} \abs{\xi - \Pi_{p, M}(\xi)}^{2} + C \sum_{i=1}^{n} \Delta t_{i-1} \mathbb{E} |\Delta \mathcal{Y}_{i}^{\pi,p,M} |^{2} , 
\end{gather*}
which together with (\ref{result fa Y}) implies 
\begin{gather*}
     \sum_{i=0}^{n-1} \Delta t_{i} |\Delta \mathcal{Z}_{i}^{\pi} |^{2} \leq C \mathbb{E} \abs{\xi - \Pi_{p, M}(\xi)}^{2}.
\end{gather*}
\end{proof}

An immediate consequence of the previous theorem is that the above finite-dimensional maps converge uniformly on compact subsets of $L^{2}(\mathcal{F}_{T})$.

\begin{corollary}
    Let $K \subset L^{2}(\mathcal{F}_{T})$ be a compact subset. Then, for all $\epsilon > 0$ there exist $p^{\epsilon}, M^{\epsilon} \in \mathbb{N}$ such that for all $p \geq p^{\epsilon}$ and $M \geq M^{\epsilon}$,
\begin{align*}
    &\max_{0 \leq i \leq n} \mathbb{E} \big|\mathscr{Y}_{i}^{\pi}(X_{t_{i}}, \xi) - \mathscr{Y}_{i}^{\pi, p, M}(X_{t_{i}}, \xi)\big|^{2} \\
    &\quad + \sum_{i=0}^{n-1} \mathbb{E} \Bigg[ \int_{t_{i}}^{t_{i+1}} \big|\mathscr{Z}_{i}^{\pi}(X_{t_{i}}, \xi) - \mathscr{Z}_{i}^{\pi, p, M}(X_{t_{i}}, \xi)\big|^{2} \, dt \Bigg] \leq \epsilon
\end{align*}
for all $\xi \in K$.
\end{corollary}

\begin{proof}
Since $L^{2}(\mathcal{F}_{T})$ is a Hilbert space, the chaos decomposition provides a Schauder basis. This means that, for any $\epsilon > 0$, we can choose $p^\epsilon$ and $M^\epsilon$ such that, for all $p \geq p^{\epsilon}$ and $M \geq M^{\epsilon}$,
\begin{gather*}
   \mathbb{E} \abs{\mathcal{Y}_{n}^{\pi}(\xi) - \mathcal{Y}_{n}^{\pi, p, M}(\xi)}^{2} = \mathbb{E} \abs{\xi - \Pi_{p, M}(\xi)}^{2} \leq \epsilon \quad \forall \xi \in K.
\end{gather*}
One then concludes by using Theorem \ref{theorem error bound finite-dim appr}.
\end{proof}

\begin{remark}
    For an arbitrary terminal condition $\xi \in L^{2}(\mathcal{F}_T)$, one generally cannot obtain a quantitative convergence rate for the truncation error of the Wiener–chaos expansion. However, such rates are available under additional Malliavin-smoothness assumptions (see \cite{BriandPhilippe2014SOBB}). To keep the framework as broad as possible, we do not impose these assumptions here.   
\end{remark}

\section{Description of the algorithm}\label{section 5}

In this section we describe an implementation of the algorithm that we use to numerically approximate the solution operators $\mathcal{Y}$ and $\mathcal{Z}$, which is based on the Operator Euler scheme introduced and analyzed in the previous section.  

\subsection{Choice for the truncated basis of $L^{2}([0,T]; \mathbb{R}^{d})$}

Let $\overline{\pi} = \{\overline{t}_{0} = 0 < \dots < \overline{t}_{M} = T\}$ be a time partition of $[0,T]$. Set $\Delta \overline{t}_{i} \coloneqq \overline{t}_{i+1}-\overline{t}_{i}$. Following \cite{BriandPhilippe2014SOBB}, we consider
\begin{gather*}
    h_{i}^{j}(t) = \frac{1}{\sqrt{\Delta \overline{t}_{i}}} \mathbf{1}_{(\overline{t}_{i-1}, \overline{t}_{i}]}(t) e_{j}, \quad i=1, \dots, M, \quad j=1, \dots, d,
\end{gather*}
where $\{e_{j}\}_{1 \leq j \leq d}$ is the canonical basis of $\mathbb{R}^{d}$. This can be seen as a truncated orthonormal basis of $L^{2}([0,T]; \mathbb{R}^{d})$, since we can complete it for example with the Haar basis in each sub-interval. This choice makes the simulation of the integrals appearing in (\ref{infinite stochastic process hermite}) very simple, since we have that 
\begin{gather*}
    \int_{0}^{t} h_{i}^{j}(s) \cdot dB_{s} = \mathbf{1}_{\{ \overline{t}_{i-1} \leq t \}} \frac{1}{\sqrt{\Delta \overline{t}_{i}}} (B_{t}^{j} - B_{\overline{t}_{i-1}}^{j})
\end{gather*}
for $j=1,\dots,d$.

\subsection{Implementation}

The notation that we use in the algorithm is the following:
\begin{itemize}
    \item $d$: dimension of the Brownian motion;
    \item $\pi = \{t_{0} = 0 < \dots < t_{n} = T\}$: partition of $[0,T]$ used for the Operator Euler scheme; 
    \item $p$: maximum order of the chaos decomposition;
    \item $\overline{\pi} = \{\overline{t}_{0} = 0 < \dots < \overline{t}_{M} = T\}$: partition of $[0,T]$ used for the truncation of the basis of $L^{2}([0,T]; \allowbreak\mathbb{R}^{d})$ in the chaos decomposition;
    \item $Md$ will be the number of elements in the truncated basis of $L^{2}([0,T];\mathbb{R}^{d})$;
    \item $\mathcal{A}_{p, M, d}\coloneqq \{ a \in \mathcal{A} \ : \ |a| \leq p, a_{k} = 0 \ \forall k > M d \}$. This is the index set considered when truncating the chaos decomposition to level $p$ and taking $M \times d$ elements of the basis of $L^{2}([0,T];\mathbb{R}^{d})$. Its cardinality $\vert\mathcal{A}_{p, M, d}\vert$ indicates the dimension of the space $\Pi_{p,M}(L^{2}(\mathcal{F}_{T}))$, introduced in (\ref{truncation of the chaos decomposition}). 
\end{itemize}

As seen at the end of Section \ref{Section: Markovian maps in the Operator Euler scheme}, for each $\xi \in L^{2}(\mathcal{F}_{T})$, one can find the truncated Markovian maps by solving 
\begin{flalign*}
    (\mathscr{Y}_{i}^{\pi, p, M}, \mathscr{Z}_{i}^{\pi, p, M})(\cdot, \xi) \coloneqq \argmin_{(V_{i}, U_{i})(\cdot, \xi)} & \mathbb{E} \Big| U_{i}(X_{t_{i}}, \xi) -\mathscr{Y}_{i+1}^{\pi, p, M}(X_{t_{i+1}}, \xi) \\ & - \Delta t_{i} g(t_{i}, U_{i}(X_{t_{i}}, \xi), V_{i}(X_{t_{i}}, \xi)) + V_{i}(X_{t_{i}}, \xi) \cdot \Delta B_{i} \Big|^{2}  ,
\end{flalign*}
for $i=n-1, \dots, 0$. Since we want to solve this for a wide range of $\xi$, but do not want to solve it for each one of them separately, we proceed as follows. For each $a \in  \mathcal{A}_{p,M, d}$, choose $ d_{a}^{1}, d_{a}^{2} \in \mathbb{R}$ such that $d_{a}^{1} \leq d_{a}^{2}$. Consider then the compact subspace $K \subset L^{2}(\mathcal{F}_{T})$ given by 
\begin{gather}\label{compact subset}
    K \coloneqq \Big\{ \xi \in L^{2}(\mathcal{F}_{T}) \colon \xi = \sum_{k=0}^{p} \sum_{|a|=k}  d_{a} X_{T}^{a}, \ d_{a} \in [d_{a}^{1}, d_{a}^{2}], \quad a = (a_{1}, \dots, a_{M d}) \Big\}.
\end{gather}
Let $\mu$ denote the uniform measure on $K$. We then solve the optimization problem
\begin{flalign}
     (\mathscr{Y}_{i}^{\pi, p, M} &, \mathscr{Z}_{i}^{\pi, p, M})(\cdot, \cdot) \coloneqq \min_{(U_{i}, V_{i})(\cdot, \cdot)} \int_{\Omega \times K} \Big| U_{i}(X_{t_{i}}(\omega), \xi) -\mathscr{Y}_{i+1}^{\pi, p, M}(X_{t_{i+1}}(\omega), \xi) \notag \\ & - \Delta t_{i} g(t_{i}, U_{i}(X_{t_{i}}(\omega), \xi), V_{i}(X_{t_{i}}(\omega), \xi)) + V_{i}(X_{t_{i}}(\omega), \xi) \cdot \Delta B_{i} \Big|^{2} \mathbb{P}(d\omega) \otimes \mu(d\xi). \label{second algorithm euler}
\end{flalign}
By the first part of Remark \ref{remark finite dimensional domain}, the maps over which we are optimizing have a representation of the form $U_{i}, V_{i} \colon \mathbb{R}^{\mathcal{A}_{p, M, d}} \times \Pi_{p,Md}(L^{2}(\mathcal{F}_{T})) \to \mathbb{R} \times \mathbb{R}^{d}$. 

Since the dimension of the domain is proportional to $|\mathcal{A}_{p, M,d} \vert$, which by nature is quite high, we choose to work with neural networks. These have been already used successfully in finding the Markovian maps in FBSDEs problems where the forward equation is of high dimensionality, see e.g. \cite{HanJiequn2018Shpd}, \cite{HanJiequn2020Cotd}, \cite{HureCome2020Dbsf}. We refer to this method as the Deep Operator BSDE.

\subsection{Computational limitations of the method}

A key limitation of the proposed approach is the \emph{curse of dimensionality}. Recall that the truncation of the Wiener chaos decomposition requires choosing a maximum order $p$ and a finite number $M d$ of basis functions in $L^{2}([0,T];\mathbb{R}^{d})$. 
The cardinality of the resulting index set $\mathcal{A}_{p,M,d}$ can be obtained by using Stars and Bars and Pascal's identity (see e.g. \cite{FellerWilliam1968Aitp})
\begin{gather}\label{combinatorics cardinality}
    \vert\mathcal{A}_{p, M, d}\vert = \sum_{k=0}^{p} \frac{(M d+k-1)!}{(M d -1)!k!} = \frac{(M d + p)!}{(M d)! p!}.
\end{gather}
This quantity grows rapidly both in the chaos order $p$ and in the product $Md$. For instance, when $p$ is fixed, we have $\big|\mathcal{A}_{p,M,d}\big| = \mathcal{O}((M d)^{p})$, while for fixed $M d$ the growth is $\mathcal{O}(p^{M d})$. 

This impacts the dimensions of the optimization problems we need to solve. At each Euler step $t_{i}$, the approximation of the maps $(\mathscr{Y}_{i}^{\pi,p,M}, \mathscr{Z}_{i}^{\pi,p,M})$ involves neural networks whose input dimension is proportional to $\vert\mathcal{A}_{p,M,d}\vert$. Training these networks often involves millions of parameters, which makes the optimization challenging: convergence can be slow, the results may be sensitive to the choice of hyperparameters, and the computational cost is large. 

In addition, the computational burden also depends on the compact set $K$ introduced in \eqref{compact subset}. The wider the ranges $[d_a^1,d_a^2]$ for the coefficients, the larger the variability in the inputs $\xi$, and hence the more complex the learning problem becomes. Larger sets $K$ typically require richer network architectures, more training samples, and longer optimization times. 

Modern stochastic optimization techniques and hardware accelerators help to manage moderately large problems, but in very high-dimensional settings these issues remain a major limitation of the method.

\subsection{Comparison with existing parametric methods}

As stated in Theorem \ref{theorem markovianity}, when the terminal condition depends only on the terminal value of a forward SDE, the solution admits a simple Markovian representation. If the objective is to learn the solution operator for a \emph{parametric family} of such terminal conditions, see e.g. \cite[Section 2.4]{ElKarouiN.1997BSDE}, the Wiener chaos expansion is not strictly required: one can instead rely directly on the corresponding forward SDE and approximate the Markovian solution operator by including the parameters as additional inputs. This is aligned, in spirit, with the method introduced in \cite{NEURIPS2020_c1714160}.

The main motivation of the present work, however, is to address more general situations. One such case is when the objective is to approximate the solution operator across \emph{several families} of terminal conditions of a forward diffusion, where \emph{several families} refers to functionally different forms of terminal conditions (e.g., call and put options, or both path-dependent and terminal-state dependent payoffs). To our knowledge, this is the first method capable of handling such settings within a unified framework.

\section{Numerical examples}\label{section 6}

All experiments were conducted on a machine running Red Hat Enterprise Linux 8.10, equipped with an Intel(R) Xeon(R) Gold 5120 CPU and an NVIDIA A10 GPU with 24 GB of memory. The algorithm was implemented in Python 3.9.19 using PyTorch 2.3.0 as the main deep learning library. The code is available at the GitHub repository \url{https://github.com/pere98diaz/Deep-Operator-BSDE}.

In order to test the performance of the Deep Operator BSDE, we proceed as follows. We first fix the generator $g$ and a collection of families of terminal conditions. For each of these families, we approximate—by Monte Carlo—the coefficients of the truncated chaos decomposition for several representative terminal conditions. From these computations we extract, for each $a \in \mathcal{A}_{p,M}$, the maximum and minimum values, which then serve as the bounds $d_{a}^{1}$ and $d_{a}^{2}$ in (\ref{compact subset}). In this way we define the compact subset of $L^{2}(\mathcal{F}_{T})$ where the solution operators are approximated. We then evaluate the performance of the Deep Operator BSDE on the original families of terminal conditions.  

These families can be viewed as low-dimensional projections of the high-dimensional set $K$. We emphasize that, although the reported results concern these projections, the Deep Operator BSDE is in fact trained on the much larger set $K$. The method does not see the individual families during training, which underlines that the learned solution operator is genuinely defined on a high-dimensional domain.

As baseline methods, we use either the approach proposed in \cite{BriandPhilippe2014SOBB} or, in the case of linear BSDEs, a direct Monte Carlo approximation. It is worth stressing that these baselines evaluate the solution operator only at a specific terminal condition, whereas our proposed methodology learns the entire solution operator at once. It is therefore natural to expect that the Deep Operator BSDE may not always match perfectly the baseline across the whole domain.  

The time partition $\pi$ used in the Operator Euler scheme is $\pi = \{ T \times \tfrac{i}{10} : 0 \leq i \leq 10 \}$. The choice of $p$ and $\overline{\pi}$ were made empirically by evaluating the ratio
\begin{gather*}
    \frac{\mathbb{E}\abs{\xi - \Pi_{p, Md}(\xi)}^2}{\mathbb{E}\abs{\xi}^2}
\end{gather*}
across the different families of terminal conditions, and selecting the configuration that provided the best balance between accuracy and tractability.  

Following \cite{NEURIPS2020_c1714160}, we assess the discrepancy between the Deep Operator BSDE and the baseline using the scaled relative error
\begin{gather*}
    \text{err}_Y(\xi) = \frac{\abs{\mathcal{Y}_{0}^{\text{DO}}(\xi) - \mathcal{Y}_0^{\text{Base}}(\xi)}}{1+\abs{\mathcal{Y}_0^{\text{Base}}(\xi)}},
\end{gather*}
with an analogous definition for $Z$. Here \textit{DO} stands for the approximation given by the Deep Operator BSDE, and \textit{Base} for the one given by the Baseline. This metric behaves like an absolute error when the baseline is small, and like a relative error when the baseline is large.  

A detailed description of the training hyperparameters and the network architectures used in the Deep Operator BSDE at each time step is provided in the Supplementary Material.

\subsection{Pricing and hedging -- linear generator}

We consider the case of pricing and hedging options under the physical measure in a Black-Scholes setting, which corresponds to the linear generator $g(t,y,z) = -ry - z (\mu-r)/\sigma$, with $r$ being the interest rate, $\mu$ the drift and $\sigma$ the volatility, see e.g. \cite[Section 4.5.1]{ZhangJianfeng2017BSDE}.

The true solution at time $t=0$ is given by 
\begin{gather*}
    Y_{0} = e^{-rT}\mathbb{E}_{\mathbb{Q}} [ \xi ], \quad Z_{0} = \frac{\partial Y_{0}}{\partial S_{0}}\Big\vert_{S_{0} = s_{0}} \times \sigma s_{0},
\end{gather*}
where $\mathbb{Q}$ represents the risk neutral probability measure. We get a benchmark for $Y_{0}$ and $Z_{0}$ by Monte Carlo. The chosen parameters are $r=0.01$, $\sigma = 0.2$, $T=1$ and $s_{0} = 1$.  

For the truncation of the chaos expansion, we set $p=3$ and use the partition $\overline{\pi} = \{ T \times \tfrac{i}{5} : 0 \leq i \leq 5 \}$. Since $d=1$, this means that the number of chaos coefficients that are considered is $|\mathcal{A}_{3,5,1}| = 56$.

We evaluate the Deep Operator BSDE approximation across 22 different families of options. We train the Deep Operator BSDE 8 times. For each family of terminal conditions, we first compute the mean error within that family, and then report in Table \ref{results for example 1} the average and standard deviation of these mean errors across the 8 runs.

\begin{table}[ht]
\centering
\begin{tabular}{|c|c|c|}
\hline
\textbf{Option} & \textbf{$\text{err}_Y \;(\times 10^{-3})$} & \textbf{$\text{err}_Z \;(\times 10^{-2})$} \\
\hline
Call  & 3.85 $\pm$ 1.05 & 1.27 $\pm$ 0.70 \\
Put  & 4.23 $\pm$ 1.98 & 1.08 $\pm$ 0.56 \\
Down-and-Out Call  & 3.41 $\pm$ 1.25 & 1.48 $\pm$ 0.61 \\
Up-and-Out Call  & 2.96 $\pm$ 1.20 & 1.11 $\pm$ 0.82 \\
Down-and-Out Put  & 3.41 $\pm$ 1.26 & 1.75 $\pm$ 1.05 \\
Up-and-Out Put  & 2.71 $\pm$ 1.79 & 2.69 $\pm$ 0.86 \\
Down-and-In Call  & 2.90 $\pm$ 0.65 & 4.65 $\pm$ 0.74 \\
Up-and-In Call  & 3.56 $\pm$ 1.10 & 1.65 $\pm$ 0.96 \\
Down-and-In Put  & 2.78 $\pm$ 1.77 & 1.66 $\pm$ 0.91 \\
Up-and-In Put  & 2.98 $\pm$ 1.29 & 2.31 $\pm$ 1.02 \\
Double-Knock-Out Call & 3.04 $\pm$ 1.31 & 1.35 $\pm$ 0.88 \\
Double-Knock-Out Put  & 3.36 $\pm$ 1.24 & 1.54 $\pm$ 0.98 \\
Double-Knock-In Call  & 3.56 $\pm$ 1.14 & 2.10 $\pm$ 1.00 \\
Double-Knock-In Put   & 2.85 $\pm$ 1.69 & 1.59 $\pm$ 0.88 \\
Power Asian Call Fixed Strike     & 3.68 $\pm$ 1.96 & 2.81 $\pm$ 0.80 \\
Power Asian Put Fixed Strike      & 3.36 $\pm$ 1.48 & 5.70 $\pm$ 0.82 \\
Power Asian Call Floating Strike  & 2.51 $\pm$ 0.79 & 3.93 $\pm$ 0.97 \\
Power Asian Put Floating Strike   & 3.25 $\pm$ 1.99 & 5.65 $\pm$ 1.01 \\
Lookback Fixed Strike Call        & 4.19 $\pm$ 1.49 & 2.88 $\pm$ 0.48 \\
Lookback Fixed Strike Put         & 4.55 $\pm$ 2.04 & 4.98 $\pm$ 0.92 \\
Power Lookback Floating Strike Call & 4.10 $\pm$ 1.17 & 16.90 $\pm$ 1.16 \\
Power Lookback Floating Strike Put  & 6.20 $\pm$ 3.64 & 18.59 $\pm$ 0.84 \\
\hline
\end{tabular}
\caption{Summary of the 22 option families and their errors (mean $\pm$ std). Values scaled by $10^{-3}$ and $10^{-2}$, respectively.}
\label{results for example 1}
\end{table}

We illustrate the results with a few representative plots. A more extensive collection is available in the Supplementary Material and in the accompanying GitHub repository.  

We start with the one–dimensional families corresponding to Put options and Power Asian Put options with floating strike, defined respectively by
\begin{gather*}
    \xi(K) = (K - S_T)_+, \quad \xi(p) = \big( S_T - (\tfrac{1}{10}\sum_{i=1}^{10} S_{t_i})^p \big)_+.
\end{gather*}
The corresponding plots are shown in Figures~\ref{Figure 1(a)}–\ref{Figure 1(b)}. For both one-dimensional families, the $\mathcal{Y}_0$ component is approximated with high accuracy. For $\mathcal{Z}_0$, the Put option also aligns closely with the baseline, while for the Power Asian Put (floating strike) the approximation is slightly less precise. This behavior is to be expected, as approximating the $\mathcal{Z}$ component is typically more challenging, both in classical numerical schemes and in the Euler scheme.

\begin{figure}[h]
    \centering
    \begin{subfigure}{7.0cm}
        \centering
        \includegraphics[width=7.0cm]{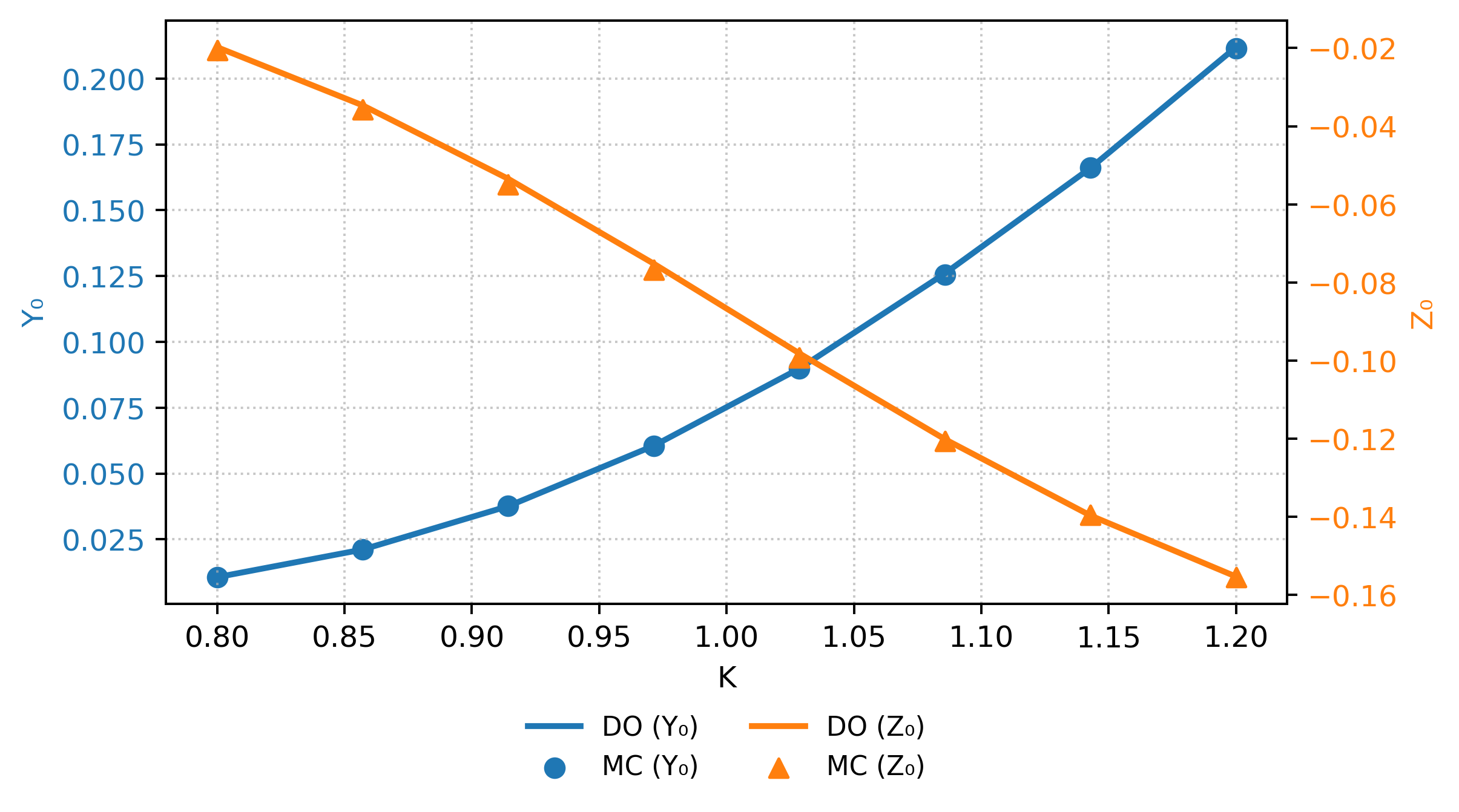}
        \caption{Deep Operator BSDE vs.\ Monte Carlo approximation of the solution operators $\mathcal{Y}_0$ and $\mathcal{Z}_0$ for Put options.}
        \label{Figure 1(a)}
    \end{subfigure}
    \hspace{0.06cm}
    \begin{subfigure}{7.0cm}
        \centering
        \includegraphics[width=7.0cm]{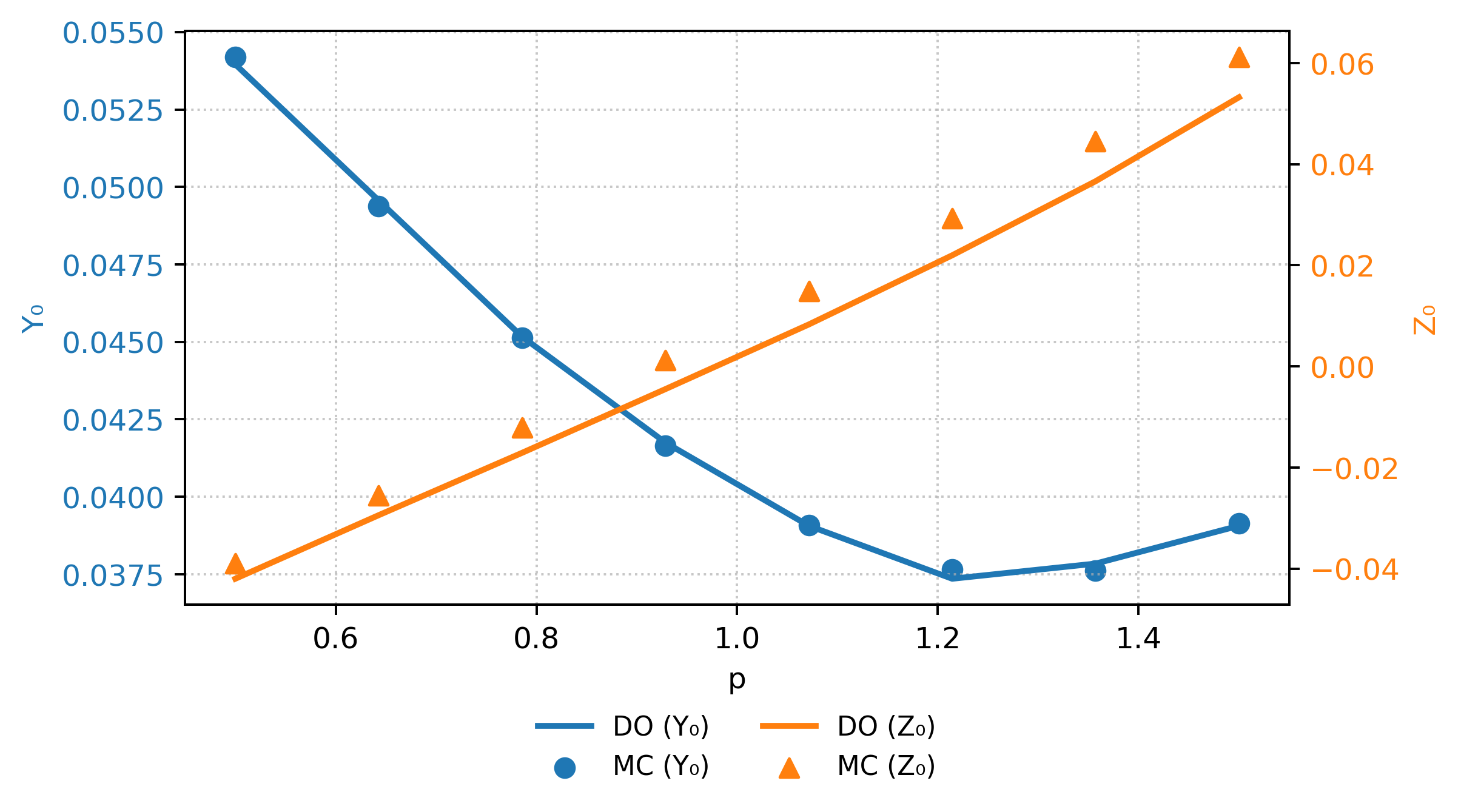}
        \caption{Deep Operator BSDE vs.\ Monte Carlo approximation of the solution operators $\mathcal{Y}_0$ and $\mathcal{Z}_0$ for Power Asian Put options with Floating Strike.}
        \label{Figure 1(b)}
    \end{subfigure}
    \caption{}
    \label{Figure 1}
\end{figure}

We also include some plots for the two-dimensional family of Down-and-Out Call options, defined as
\begin{gather*}
    \xi(K,L) \coloneqq (S_{T}-K)_{+} \mathbf{1}_{\{ S_{t_{i}} \geq L \ \forall  i \in \{0, \dots, 10\} \}}.
\end{gather*}

Figures~\ref{Figure 2(a)}–\ref{Figure 2(b)} display the surfaces of $\mathcal{Y}_{0}$ produced by the Deep Operator BSDE and the Monte Carlo baseline for different values of $K$ and $L$. The two surfaces are closely aligned. Figures~\ref{Figure 3(a)}–\ref{Figure 3(b)} report the corresponding results for the $Z$ component. Finally, Figures~\ref{Figure 4(a)}–\ref{Figure 4(b)} report the errors for $\mathcal{Y}_{0}$ and $\mathcal{Z}_{0}$. While the errors for $\mathcal{Y}_{0}$ remain small and relatively stable across the domain, the errors for $\mathcal{Z}_{0}$ increase with larger values of $K$.

\begin{figure}[h]
    \centering
    \begin{subfigure}{6.5cm}
        \centering
        \includegraphics[width=6.5cm]{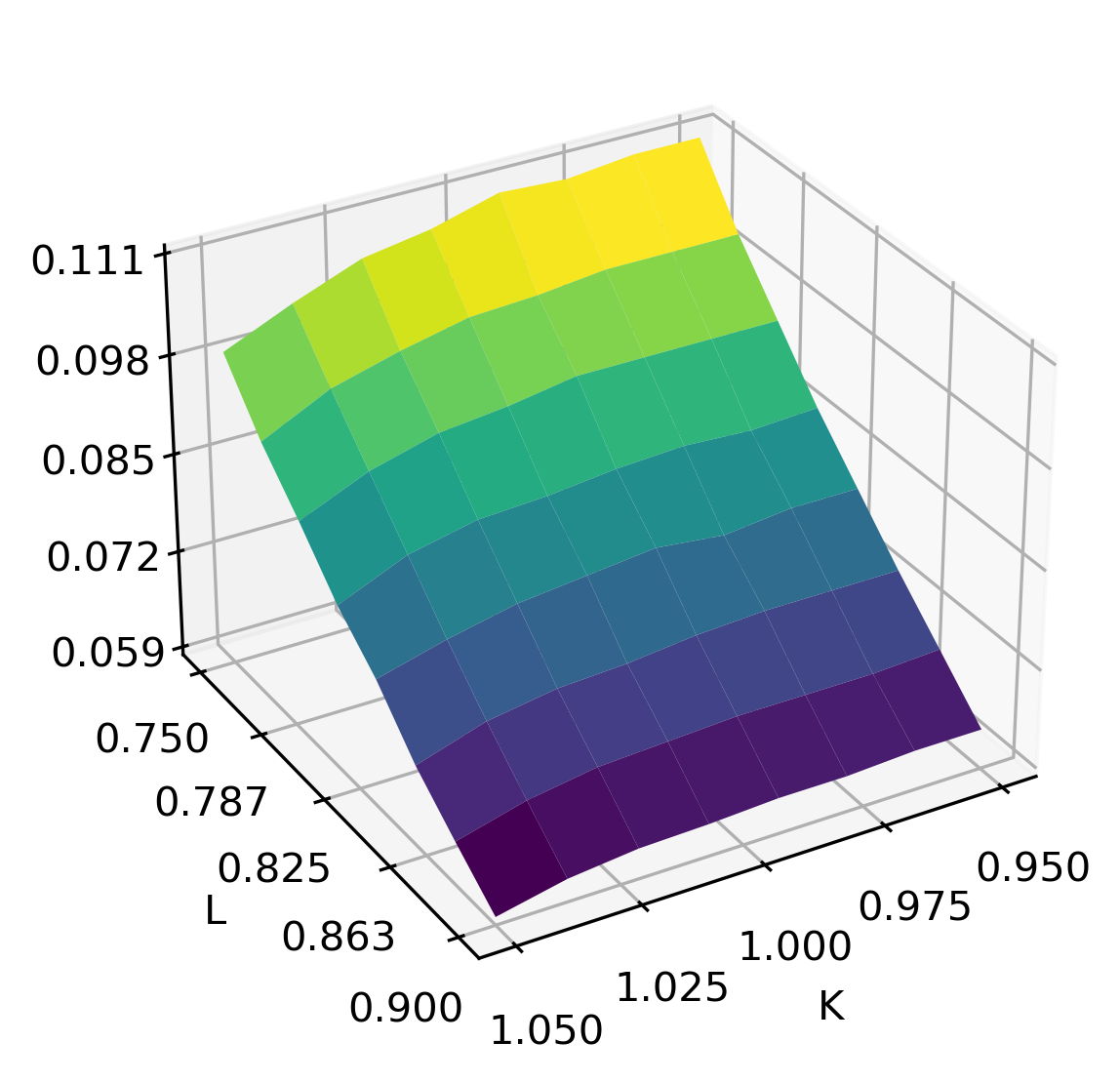}
        \caption{Deep Operator BSDE approximation of the solution operator $\mathcal{Y}_0$ for Down-and-Out Call options.}
        \label{Figure 2(a)}
    \end{subfigure}
    \hspace{0.06cm}
    \begin{subfigure}{6.5cm}
        \centering
        \includegraphics[width=6.5cm]{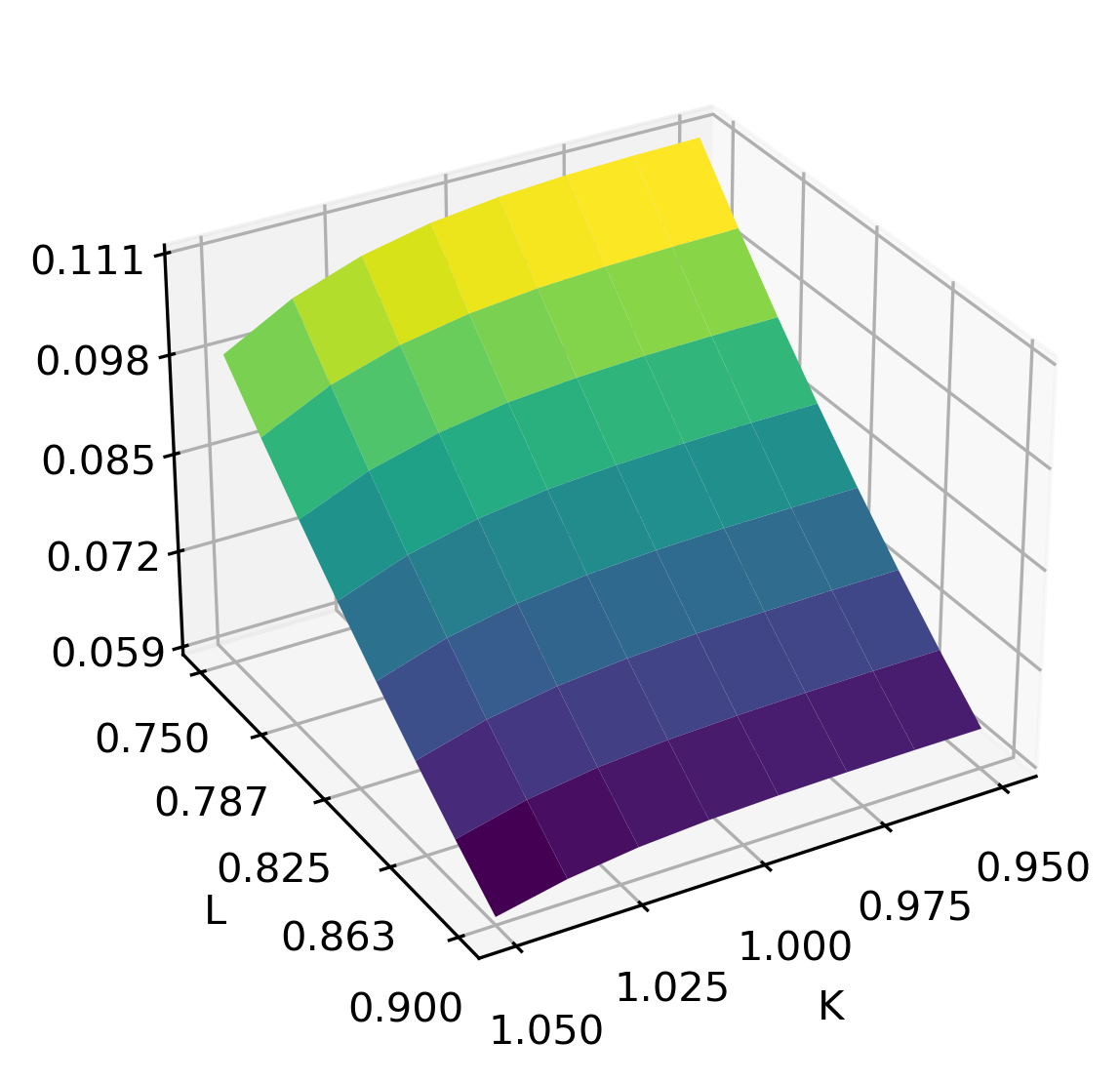}
        \caption{Monte Carlo approximation of the solution operator $\mathcal{Y}_0$ for Down-and-Out Call options.}
        \label{Figure 2(b)}
    \end{subfigure}
    \caption{}
    \label{Figure 2}
\end{figure}
\begin{figure}[h]
    \centering
    \begin{subfigure}{6.5cm}
        \centering
        \includegraphics[width=6.5cm]{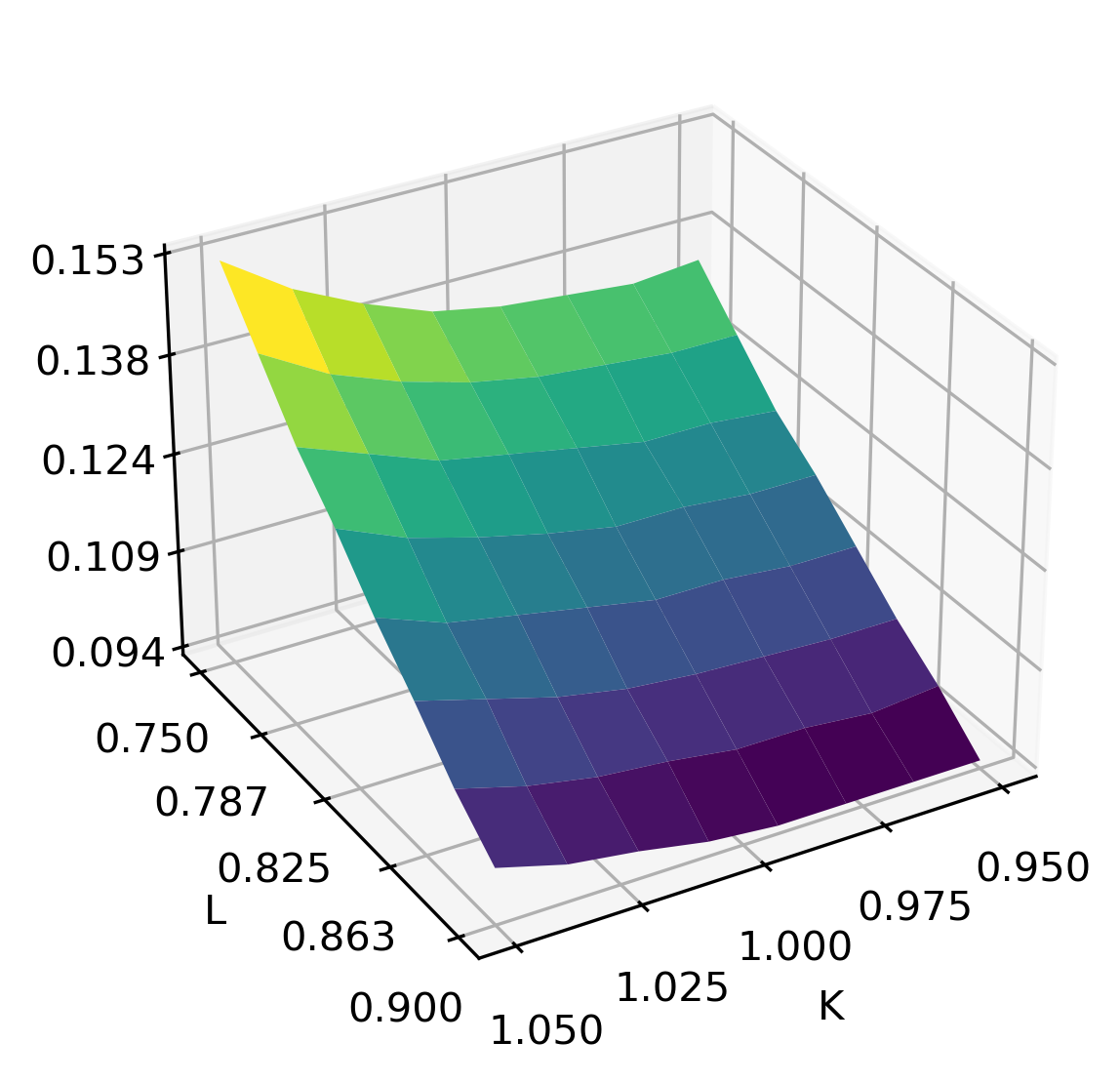}
        \caption{Deep Operator BSDE approximation of the solution operator $\mathcal{Z}_0$ for Down-and-Out Call options.}
        \label{Figure 3(a)}
    \end{subfigure}
    \hspace{0.06cm}
    \begin{subfigure}{6.5cm}
        \centering
        \includegraphics[width=6.5cm]{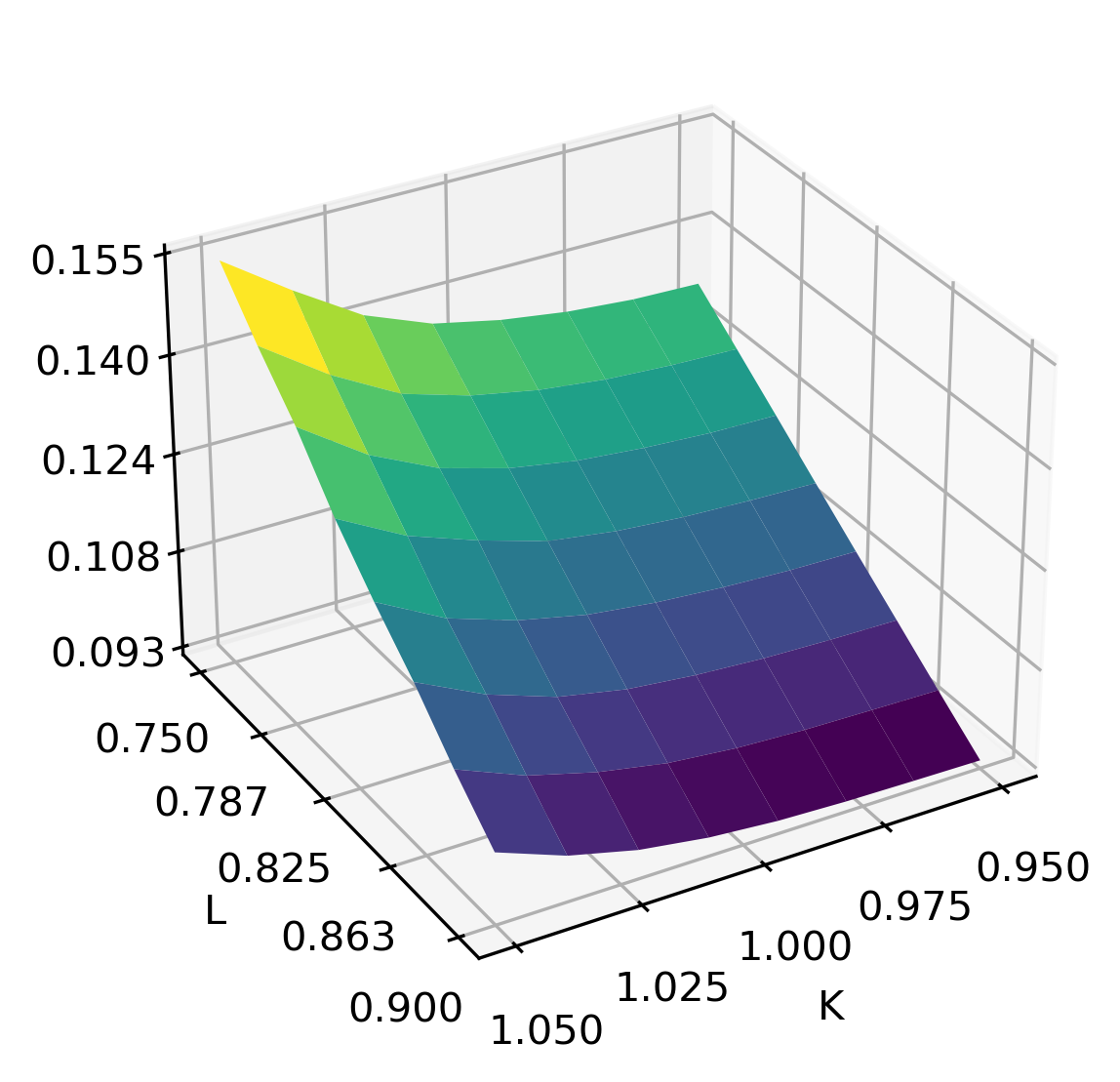}
        \caption{Monte Carlo approximation of the solution operator $\mathcal{Z}_0$ for Down-and-Out Call options.}
        \label{Figure 3(b)}
    \end{subfigure}
    \caption{}
    \label{Figure 3}
\end{figure}

\begin{figure}[h]
    \centering
    \begin{subfigure}{6.5cm}
        \centering
        \includegraphics[width=6.5cm]{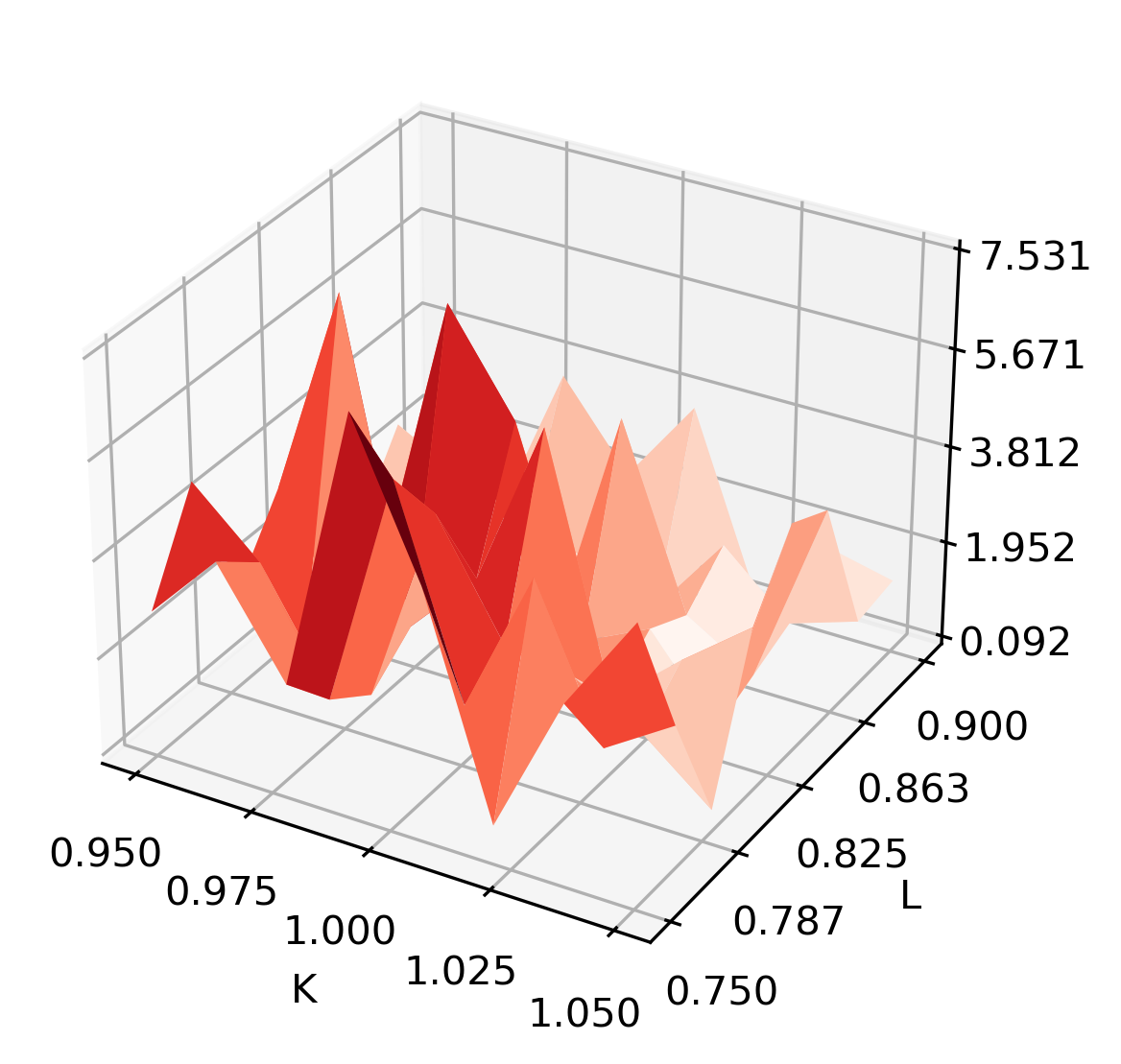}
        \caption{Error between the two approximations of the solution operator $\mathcal{Y}_0$ for Down-and-Out Call options (values shown $\times 10^{-4}$).}
        \label{Figure 4(a)}
    \end{subfigure}
    \hspace{0.06cm}
    \begin{subfigure}{6.5cm}
        \centering
        \includegraphics[width=6.5cm]{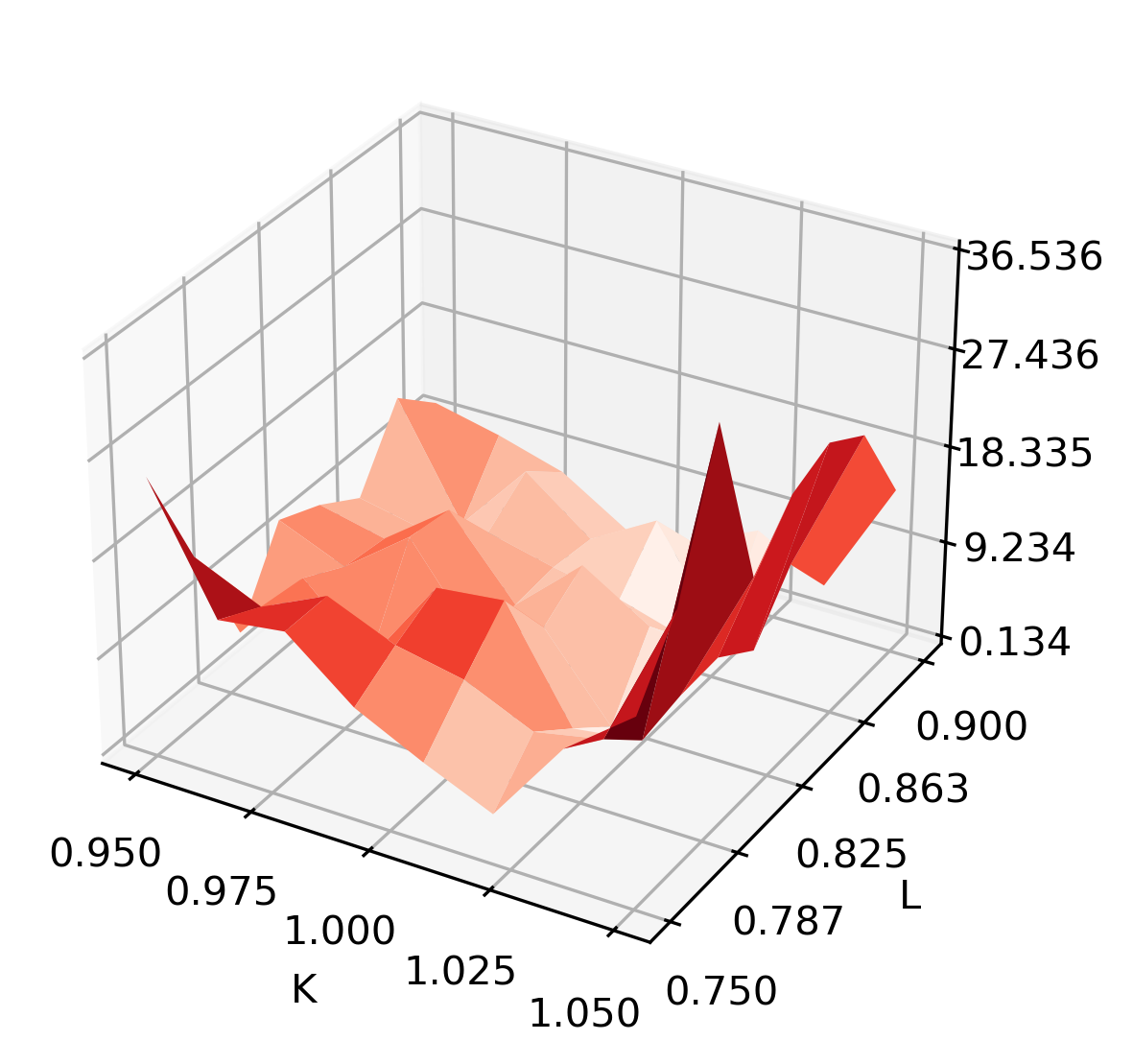}
        \caption{Error between the two approximations of the solution operator $\mathcal{Z}_0$ for Down-and-Out Call options (values shown $\times 10^{-4}$).}
        \label{Figure 4(b)}
    \end{subfigure}
    \caption{}
    \label{Figure 4}
\end{figure}

\subsection{Pricing and hedging -- nonlinear generator in dimension 2}

We next consider the case of pricing and hedging of options in a Black-Scholes setting of dimension 2. The dynamics are given by  
\begin{gather*}
    S_{t}^{j} = s_{0}^{j} e^{(\mu^{j}-(\sigma^{j})^{2}/2)t + \sigma^{j} B_{t}^{j}}, \quad \forall t \in [0,T], \quad j = \{1,2\},
\end{gather*}
$\mu^{j}$ and $\sigma^{j}$ represent the trend and the volatility of the $j$th asset, and $B=(B^{1}, B^{2})$ is a $2$-dimensional Brownian motion such that $\scalar{B^{1}}{B^{2}}_{t} = \rho t$. If one assumes that the borrowing rate $R$ is higher than the lending rate $r$, pricing and hedging an option $\xi$ is equivalent to solving a BSDE with terminal condition $\xi$ and nonlinear generator $g$ defined by 
\begin{gather*}
    g(t,y,z) = -ry - \theta \cdot z + (R-r)\Big(y- (\Sigma^{-1} z)_{1} - (\Sigma^{-1} z)_{2} \Big)_{-},
\end{gather*}
where $\theta \coloneqq \Sigma^{-1} (\mu - r\mathbf{1})$ ($\mathbf{1}$ being the vector with every component equal to one), and $\Sigma$ is the matrix defined by $\Sigma_{ij}=\sigma^{i}L_{ij}$, where $L$ denotes the lower triangular matrix in the Cholesky decomposition $C = L L^{T}$, $C = (\rho \mathbf{1}_{i \neq j} + \mathbf{1}_{i=j})_{1 \leq i, j \leq 2}$. We refer to \cite[Example 1.1]{ElKarouiN.1997BSDE} for the details.

The selected parameters are $r=0.02$, $R=0.1$, $T=1$, $\rho = 0.1$, $s_{0}^{j}=1$, $\mu^{j}=0.02$ and $\sigma^{j}=0.2$ for $j=1,2$. We evaluate the Deep Operator BSDE approximation across 28 different families of options. Since no closed-form expression for the solution of these equations exists, we use the algorithm proposed in \cite{BriandPhilippe2014SOBB} with $P=3$ and $M = 10$ as a baseline. 

For the truncation of the chaos expansion, we again set $p=3$ and use the partition $\overline{\pi} = \{ T \times \tfrac{i}{5} : 0 \leq i \leq 5 \}$. Since $d=2$, this means that the number of chaos coefficients that are considered is $|\mathcal{A}_{3,5,2}| = 286$.

We train the Deep Operator BSDE 8 times. For each family of terminal conditions, we first compute the mean error within that family, and then report in Table \ref{results for example 2} the average and standard deviation of these mean errors across the 8 runs.

\begin{table}[ht]
\centering
\begin{tabular}{|c|c|c|}
\hline
\textbf{Option} & \textbf{$\text{err}_Y \;(\times 10^{-3})$} & \textbf{$\text{err}_Z \;(\times 10^{-2})$} \\
\hline
Call -- Single                & 2.21 $\pm$ 0.65 & 0.41 $\pm$ 0.06 \\
Call -- Basket Weighted       & 2.58 $\pm$ 0.74 & 0.41 $\pm$ 0.07 \\
Call -- Spread                & 7.54 $\pm$ 1.08 & 0.25 $\pm$ 0.04 \\
Call -- Max                   & 3.66 $\pm$ 0.42 & 0.58 $\pm$ 0.07 \\
Call -- Min                   & 1.75 $\pm$ 0.98 & 0.32 $\pm$ 0.03 \\
Call -- Geometric             & 2.56 $\pm$ 0.73 & 0.41 $\pm$ 0.06 \\
Call -- Ratio                 & 6.28 $\pm$ 1.02 & 0.25 $\pm$ 0.04 \\
Put -- Single                 & 3.60 $\pm$ 0.16 & 0.24 $\pm$ 0.09 \\
Put -- Basket Weighted        & 3.19 $\pm$ 0.28 & 0.26 $\pm$ 0.09 \\
Put -- Spread                 & 7.53 $\pm$ 1.09 & 0.27 $\pm$ 0.04 \\
Put -- Max                    & 1.43 $\pm$ 0.38 & 0.18 $\pm$ 0.04 \\
Put -- Min                    & 1.38 $\pm$ 0.49 & 0.38 $\pm$ 0.10 \\
Put -- Geometric              & 3.08 $\pm$ 0.27 & 0.26 $\pm$ 0.09 \\
Put -- Ratio                  & 6.52 $\pm$ 1.02 & 0.35 $\pm$ 0.06 \\
Asian Call -- Single          & 11.00 $\pm$ 0.76 & 1.33 $\pm$ 0.07 \\
Asian Put -- Single           & 7.07 $\pm$ 1.39 & 0.20 $\pm$ 0.03 \\
Asian Call -- Basket Weighted & 10.87 $\pm$ 0.80 & 1.21 $\pm$ 0.10 \\
Asian Put -- Basket Weighted  & 8.01 $\pm$ 1.35 & 0.18 $\pm$ 0.04 \\
Asian Call -- Spread          & 2.13 $\pm$ 0.66 & 1.01 $\pm$ 0.08 \\
Asian Put -- Spread           & 2.17 $\pm$ 0.64 & 1.00 $\pm$ 0.08 \\
Asian Call -- Max             & 13.06 $\pm$ 0.83 & 1.58 $\pm$ 0.08 \\
Asian Put -- Max              & 9.53 $\pm$ 1.16 & 0.30 $\pm$ 0.06 \\
Asian Call -- Min             & 3.22 $\pm$ 1.11 & 0.98 $\pm$ 0.09 \\
Asian Put -- Min              & 3.64 $\pm$ 1.14 & 0.28 $\pm$ 0.13 \\
Asian Call -- Geometric       & 11.03 $\pm$ 0.82 & 1.17 $\pm$ 0.10 \\
Asian Put -- Geometric        & 7.96 $\pm$ 1.32 & 0.17 $\pm$ 0.02 \\
Asian Call -- Ratio           & 2.00 $\pm$ 0.52 & 1.14 $\pm$ 0.07 \\
Asian Put -- Ratio            & 4.17 $\pm$ 1.02 & 0.57 $\pm$ 0.08 \\
\hline
\end{tabular}
\caption{Summary of the 28 option families and their errors (mean $\pm$ std). Values scaled by $10^{-3}$ and $10^{-2}$, respectively.}
\label{results for example 2}
\end{table}

We illustrate the results with two representative plots. A more extensive collection is available in the Supplementary Material and in the accompanying GitHub repository.  

We focus on the one–dimensional families corresponding to Put -- Max options and Asian Call -- Max options, defined respectively by
\begin{gather*}
    \xi(K) = (K - \max(S_T^1, S_T^2))_{+}, \quad \xi(K) = \Big(\tfrac{1}{10}\sum_{i=1}^{10} \max(S_{t_i}^1, S_{t_i}^2) - K \Big)_{+}.
\end{gather*}

The plots are shown in Figures~\ref{Figure 5(a)}–\ref{Figure 5(b)}. For Put–Max options, the Deep Operator BSDE provides results that are very close to the baseline for both $\mathcal{Y}_0$ and the first component of $\mathcal{Z}_0$. In contrast, for Asian Call–Max options, the agreement is less accurate, even though the shapes and orders of magnitude remain consistent.

\begin{figure}[h]
    \centering
    \begin{subfigure}{7.0cm}
        \centering
        \includegraphics[width=7.0cm]{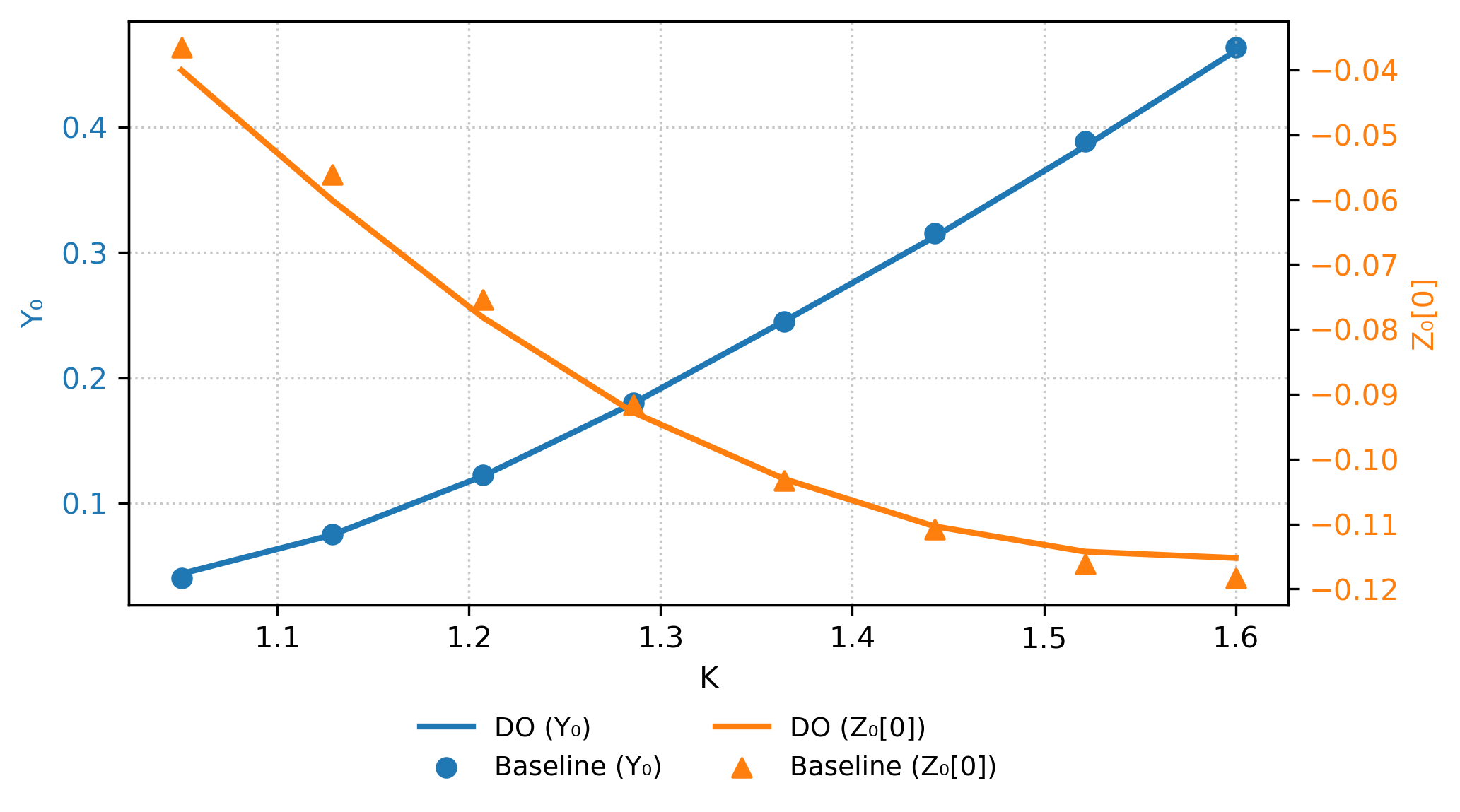}
        \caption{Deep Operator BSDE vs.\ Baseline approximation of the solution operators $\mathcal{Y}_0$ and the first component of $\mathcal{Z}_0$ for Put -- Max options.}
        \label{Figure 5(a)}
    \end{subfigure}
    \hspace{0.06cm}
    \begin{subfigure}{7.0cm}
        \centering
        \includegraphics[width=7.0cm]{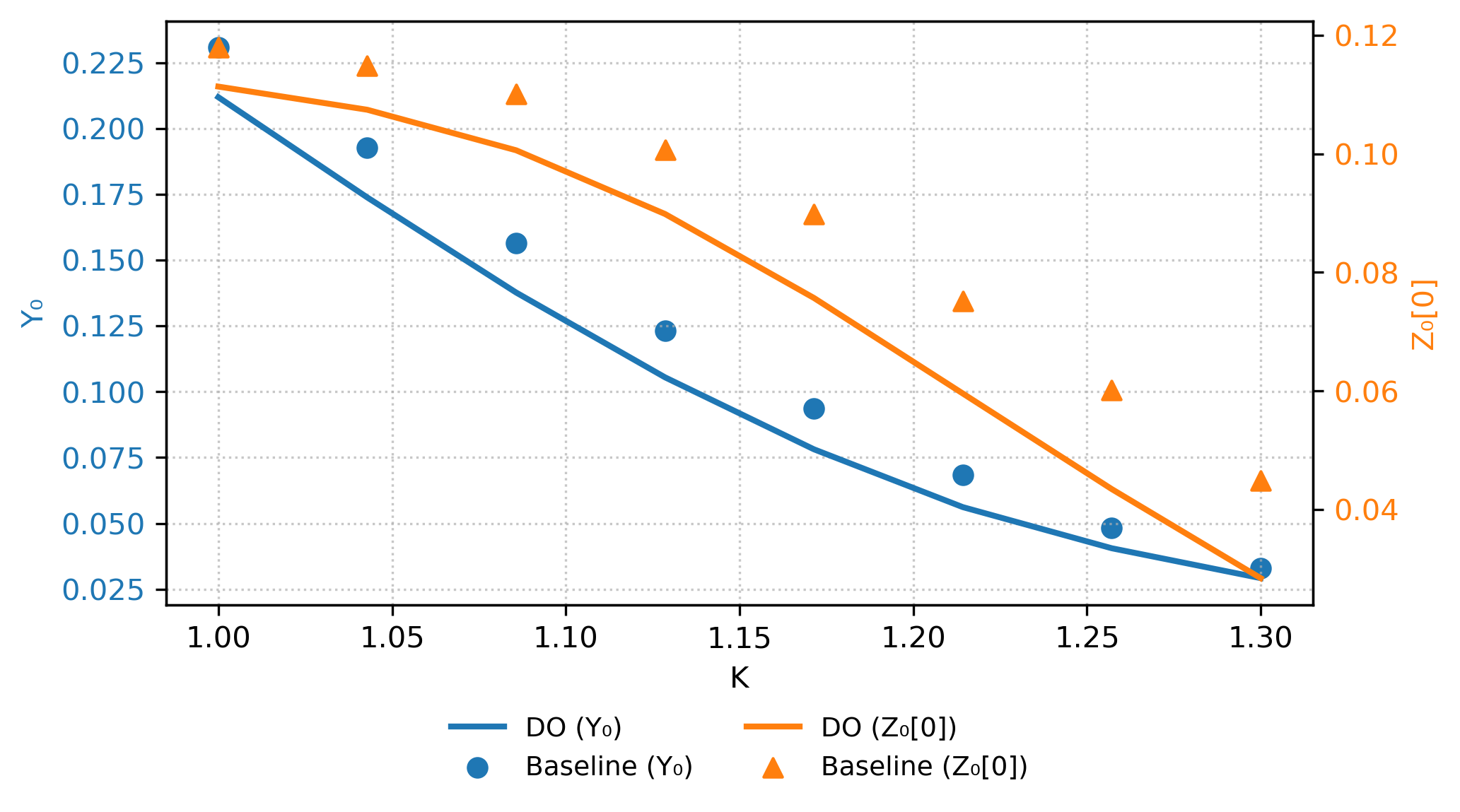}
        \caption{Deep Operator BSDE vs.\ Baseline approximation of the solution operators $\mathcal{Y}_0$ and the first component of $\mathcal{Z}_0$ for Asian Call -- Max options.}
        \label{Figure 5(b)}
    \end{subfigure}
    \caption{}
    \label{Figure 5}
\end{figure}

\section{Conclusion}

We have introduced a numerical method for approximating the solution operators of BSDEs on a discrete time grid, combining the classical Euler scheme with the Wiener chaos decomposition. The method is shown to converge under minimal assumptions, and we have detailed how it can be implemented in practice. Numerical experiments confirm that the proposed method achieves competitive performance compared to more traditional schemes, which in essence only provide evaluations of the operator at individual terminal conditions. For instance, our operator-based perspective allows us to approximate the solution across families of structurally different terminal conditions within a unified framework, a task that cannot be addressed by existing parametric BSDE methods.

%
%

\begin{acks}[Acknowledgments]
The authors thank the anonymous referees for their valuable comments, which have contributed to improving the numerical examples presented in this paper.

Giulia Di Nunno is also affiliated with the Department of Business and Management Science, NHH Norwegian School of Economics, Bergen, Norway.

Pere Diaz Lozano is the corresponding author. 
\end{acks}
\begin{funding}
The present research is carried out within the frame and support of the ToppForsk project nr. 274410 of the Research Council of Norway with the title STORM: Stochastics for Time-Space Risk Models.
\end{funding}


\appendix

\section{Architecture of the neural networks and training hyperparameters}\label{sec:architecture and numerics}

We choose to model $(\mathscr{Y}_{i}^{\pi}$, $\mathscr{Z}_{i}^{\pi})$ with the Multilevel Neural Networks presented in \cite[Section A.2]{NEURIPS2020_c1714160}. 

Recall that $(\mathscr{Y}_{i}^{\pi}$, $\mathscr{Z}_{i}^{\pi})$ take as input $(X_{t_i}^{\leq p, M}, (d_a)_{a \in \mathcal{A}_{p, M,d}})$. Notice that, as $i \in \{0, \dots, m\}$ gets smaller, there are more coordinates in $X_{t_i}^{\leq p, M}$ that become a.s. constant, and hence they do not need to be considered as input when modeling $(\mathscr{Y}_{i}^{\pi}, \mathscr{Z}_{i}^{\pi})$. At $i=0$, all coordinates in $X_{t_0}^{\leq p, M}$ become constant a.s. In practice, this means that the dimensionality of $\mathscr{Y}_{i}^{\pi}$, $\mathscr{Z}_{i}^{\pi}$ shrinks with smaller $i$, reducing the complexity of the optimization problems that we need to solve.

We now proceed to detail the architectures and hyperparameters in each example.

\subsection{Example 1 -- Linear BSDE}

We recall that the number of chaos coefficients in this case is $|\mathcal{A}_{3,5,1}| = 56$.

\paragraph*{Network architecture} The Multilevel network for both $\mathscr{Y}_{i}^{\pi}$, $\mathscr{Z}_{i}^{\pi}$ has the same structure. They take the whole concatenation of $(X_{t_i}^{\leq p, M}, (d_a)_{a \in \mathcal{A}_{p, M,d}})$ as input in the first layer. Following the multilevel network notation, for each step $i$ we use 5 as an amplifying factor, 3 levels, and Tanh as the activation function. The largest networks, i.e. $\mathscr{Y}_{9}^{\pi}$ and $\mathscr{Z}_{9}^{\pi}$, both have $1,418,583$ parameters. The smallest networks, i.e. $\mathscr{Y}_{0}^{\pi}$ and $\mathscr{Z}_{0}^{\pi}$, both have $361,483$ parameters.

\paragraph*{Training hyperparameters}

To optimize the networks we use the Adam optimizer with decoupled weight decay regularization (AdamW), with the weight decay set to $10^{-6}$. Regarding the training hyperparameters, we set a batch size of $5 \times 10^{4}$ samples, a learning rate of $5 \times 10^{-4}$, and train for $10^{5}$ gradient steps. Evaluation is performed every $1000$ steps on a test batch of size $10^{5}$.

Learning rates are scheduled with a multiplicative decay factor $\gamma = 0.90$ applied every $500$ steps, with a minimum learning rate of $10^{-6}$. Early stopping is used with a patience of 5 evaluations. Finally, training is performed in mixed precision with bfloat16 arithmetic to accelerate computations.

\subsection{Example 2 -- Nonlinear BSDE}

We recall that the number of chaos coefficients in this case is $|\mathcal{A}_{3,5,2}| = 286$.

The Multilevel networks for both $\mathscr{Y}_{i}^{\pi}$ and $\mathscr{Z}_{i}^{\pi}$ share the same overall structure. Due to the high-dimensionality of the problem, unlike in Example 1, the concatenation $(X_{t_i}^{\leq p, M}, (d_a)_{a \in \mathcal{A}_{p, M,d}})$ is not fed directly into the first layer. Instead, we adopt a late-fusion scheme.

\paragraph*{Network architecture} At each time step $t_i$, the inputs are split into two blocks:
\[
(d, x) \in \mathbb{R}^{\texttt{chaos\_dim}} \times \mathbb{R}^{\texttt{x\_dim}},
\]
where $d$ collects the chaos coefficients and $x$ the state variables $X_{t_i}^{\leq p,M}$.  
Each block is standardized separately with known mean and std:
\[
\tilde d = \frac{d - \mu_d}{\sigma_d}, 
\qquad
\tilde x = \frac{x - \mu_x}{\sigma_x}.
\]

\emph{Chaos trunk.}  
The chaos block $\tilde d$ is processed through a multilevel stack of $L$ levels:
\[
z = \text{Trunk}(\tilde d) \in \mathbb{R}^{m},
\]
where $m = \lfloor \texttt{factor} \cdot \texttt{chaos\_dim}\rfloor$ is the hidden width, and $L=\texttt{levels}$.  
In our configuration: $\texttt{factor}=2.0$, $\texttt{levels}=2$, and the activation is ReLU.

\emph{FiLM fusion.}  
The state block $\tilde x$ modulates the chaos features $z$ via Feature-wise Linear Modulation (FiLM).  
Two linear maps are learned,
\[
\gamma = \Gamma \tilde x, 
\qquad 
\beta = B \tilde x, 
\quad \gamma,\beta \in \mathbb{R}^{m},
\]
and the fused representation is given by
\[
u = \gamma \odot z + \beta,
\]
where $\odot$ denotes elementwise multiplication.  

\emph{Nonlinear head.}  
The fused features $u$ are passed to a head MLP with $H=\texttt{head\_levels}$ hidden layers, each of width
\[
w = \max\!\Big(1,\;\lfloor \texttt{head\_factor} \cdot m \rfloor\Big).
\]
In our configuration: $\texttt{head\_levels}=2$, $\texttt{head\_factor}=1.5$, activation ReLU.  
This produces the nonlinear output $y_{\mathrm{mlp}} \in \mathbb{R}^{\texttt{out\_size}}$.

\emph{Affine skip.}  
In parallel, an affine map depending only on $x$ is added:
\[
y_{\mathrm{aff}} = A \tilde x + a.
\]
The final network output is the sum of the two pathways:
\[
y = y_{\mathrm{mlp}} + y_{\mathrm{aff}}.
\]

This design ensures that the high-dimensional chaos coefficients $d$ are handled by the multilevel trunk, while the state variables $x$ act as modulators through FiLM, yielding an almost linear but flexible structure suited for large input dimensions.

The largest networks, i.e. $\mathscr{Y}_{9}^{\pi}$ and $\mathscr{Z}_{9}^{\pi}$, have $2,866,865$ and $2,868,010$ parameters, respectively. The smallest networks, i.e. $\mathscr{Y}_{0}^{\pi}$ and $\mathscr{Z}_{0}^{\pi}$, have  $2,539,395$ and $2,540,254$ parameters, respectively.

\paragraph*{Training hyperparameters}

To optimize the networks we use the Adam optimizer with decoupled weight decay regularization (AdamW), with the weight decay set to $10^{-6}$. As for the training hyperparameters, we set a batch size of $10^{5}$ samples, a learning rate of $5 \times 10^{-4}$, and train for $10^{5}$ gradient steps. Evaluation is performed every $1000$ steps on a test batch of size $10^{5}$.

Learning rates are scheduled with a multiplicative decay factor $\gamma = 0.90$ applied every $500$ steps, with a minimum learning rate of $10^{-6}$. Early stopping is used with a patience of 5 evaluations. Training is performed in mixed precision with bfloat16 arithmetic to accelerate computations.

\section{Additional plots}\label{sec: additional plots}

We now present plots for several families of terminal conditions. Additional plots for the rest of the families are available in the GitHub repository \url{https://github.com/pere98diaz/Deep-Operator-BSDE}.

\begin{figure}[H]
    \centering
    \begin{subfigure}{7.0cm}
        \centering
        \includegraphics[width=7.0cm]{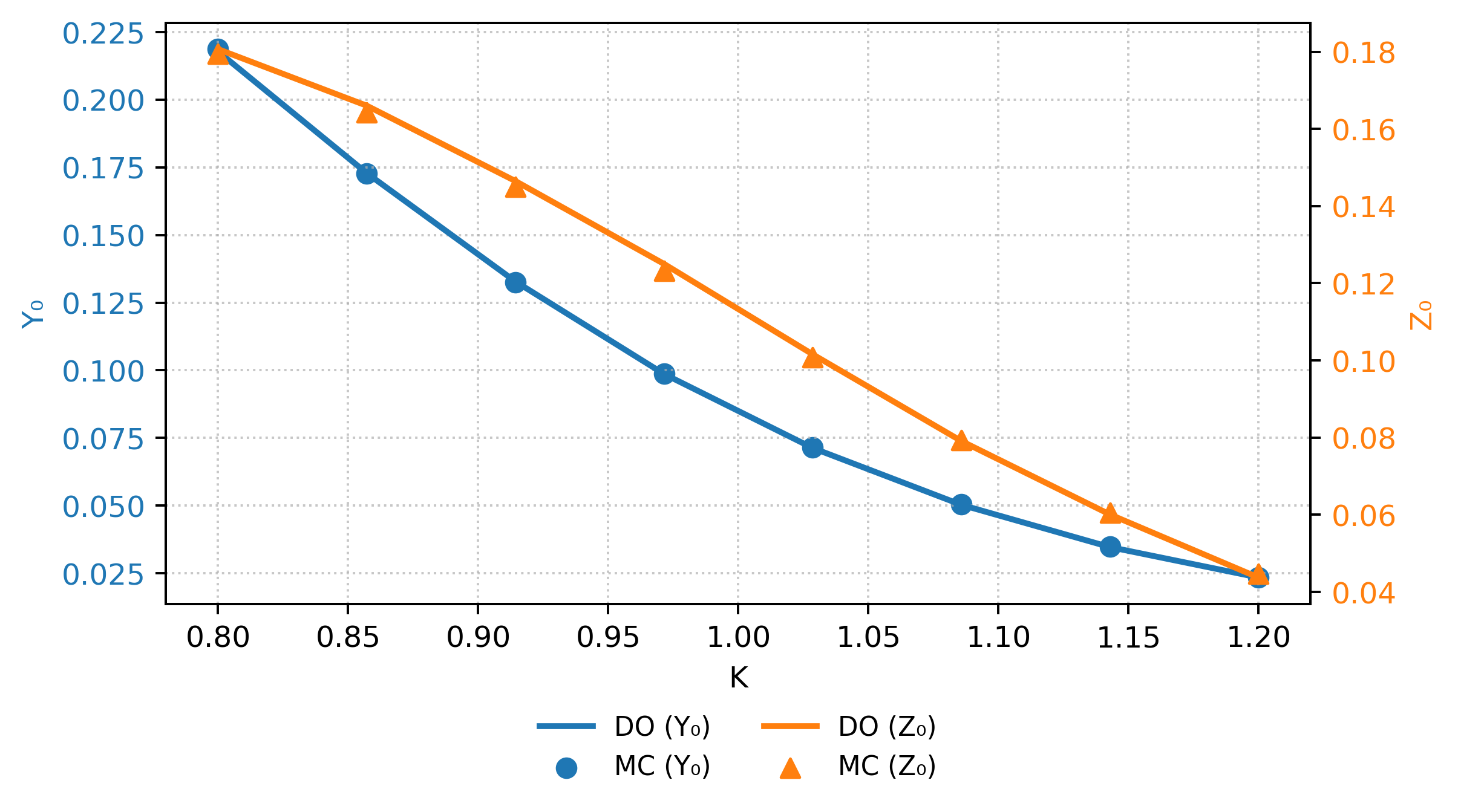}
        \caption{Example 1 -- Call.}
    \end{subfigure}
    \hspace{0.06cm}
    \begin{subfigure}{7.0cm}
        \centering
        \includegraphics[width=7.0cm]{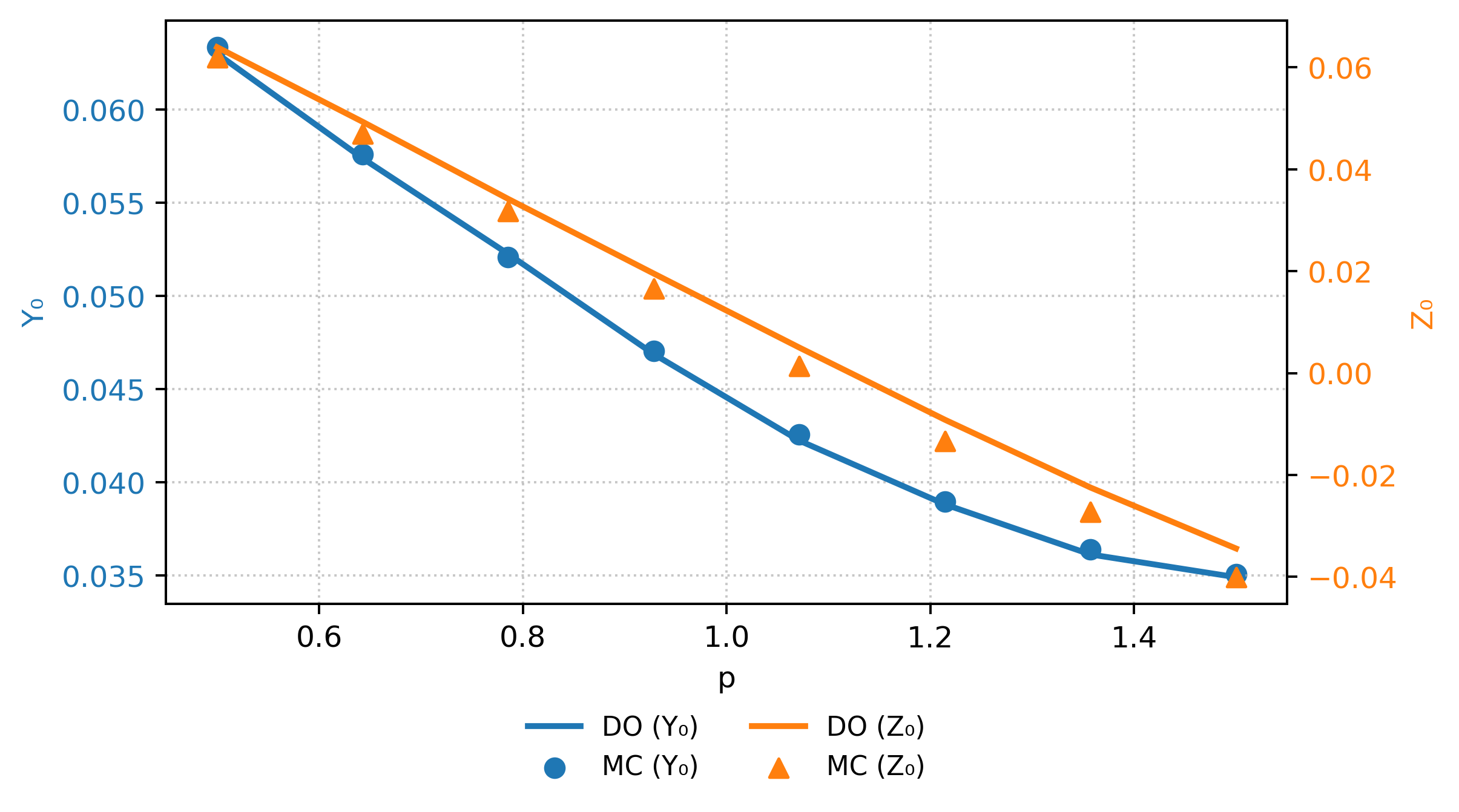}
        \caption{Example 1 -- Power Asian Call with Floating Strike.}
    \end{subfigure}
    \caption{}
\end{figure}

\begin{figure}[H]
    \centering
    \begin{subfigure}{7.0cm}
        \centering
        \includegraphics[width=7.0cm]{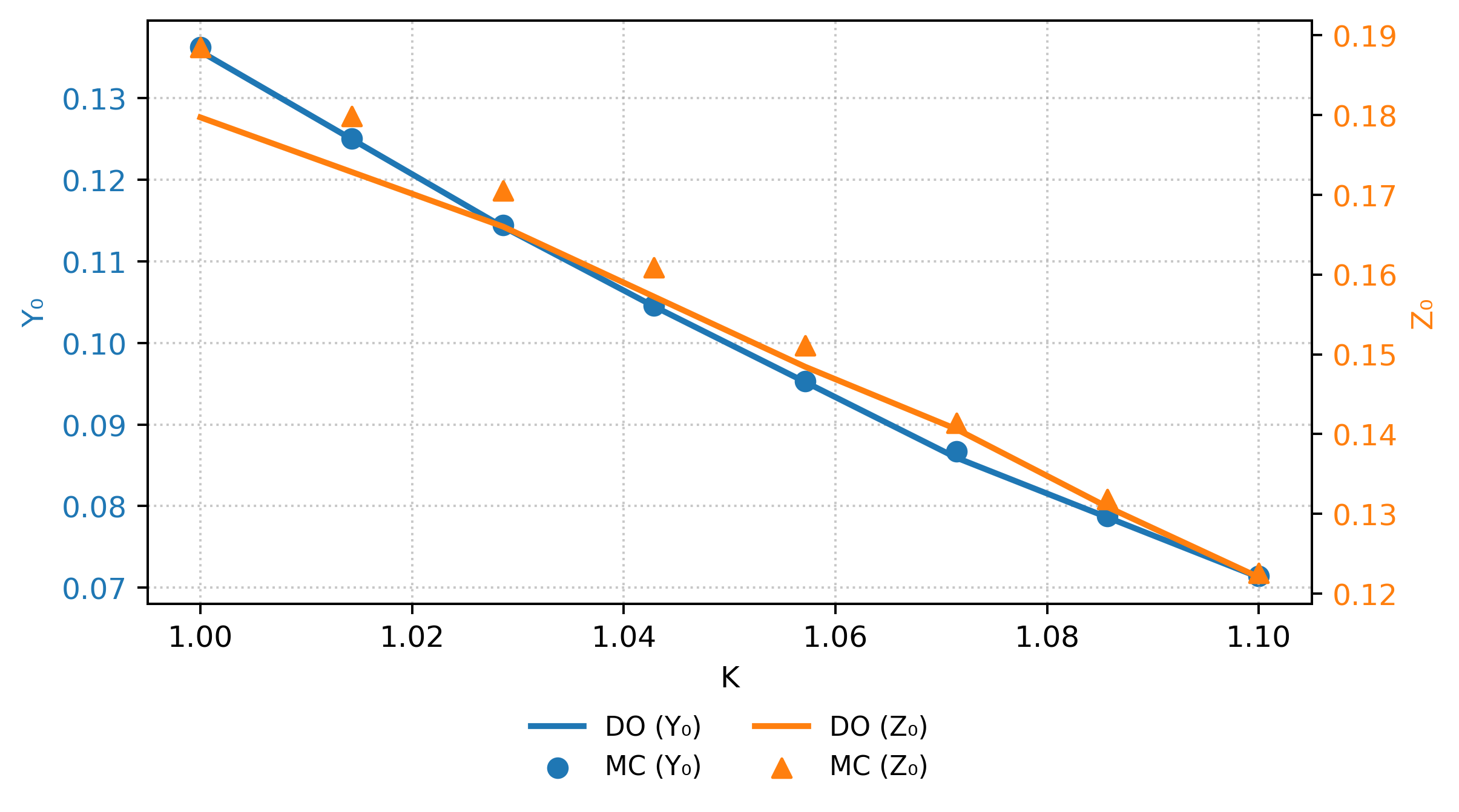}
        \caption{Example 1 -- Lookback Fixed Strike Call.}
    \end{subfigure}
    \hspace{0.06cm}
    \begin{subfigure}{7.0cm}
        \centering
        \includegraphics[width=7.0cm]{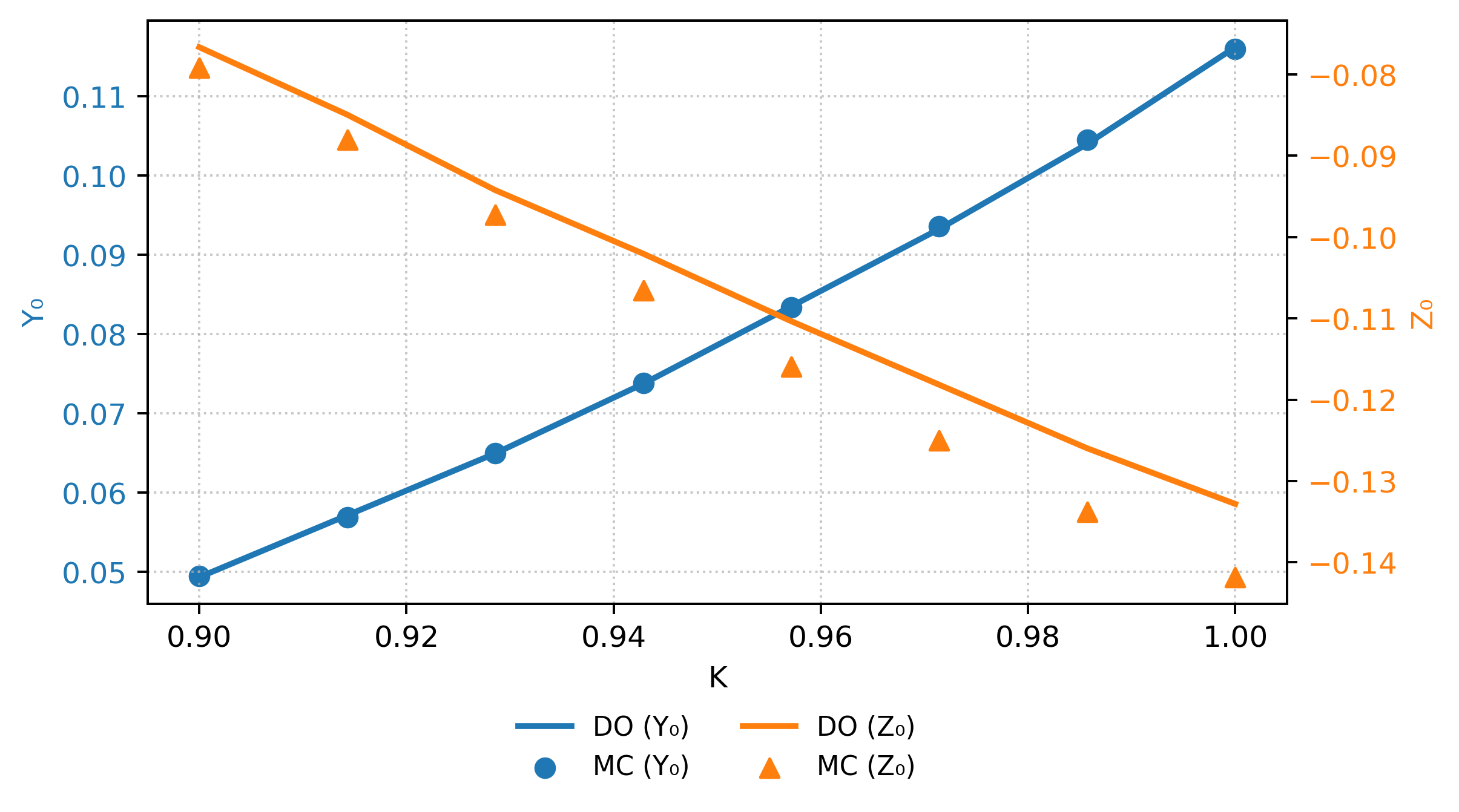}
        \caption{Example 1 -- Lookback Fixed Strike Put.}
    \end{subfigure}
    \caption{}
\end{figure}

\begin{figure}[H]
    \centering
    \begin{subfigure}{7.0cm}
        \centering
        \includegraphics[width=7.0cm]{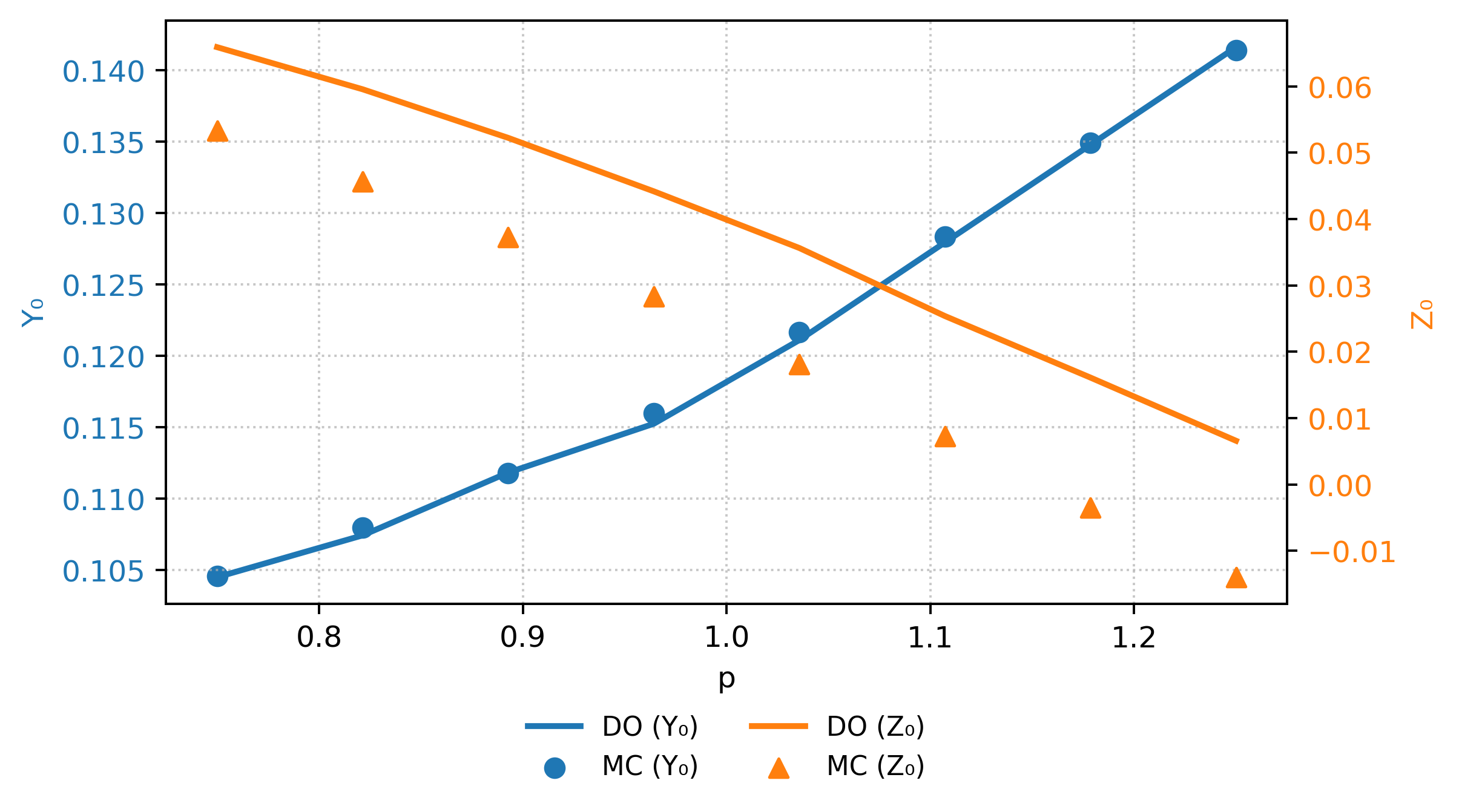}
        \caption{Example 1 -- Power Lookback Floating Strike Call.}
    \end{subfigure}
    \hspace{0.06cm}
    \begin{subfigure}{7.0cm}
        \centering
        \includegraphics[width=7.0cm]{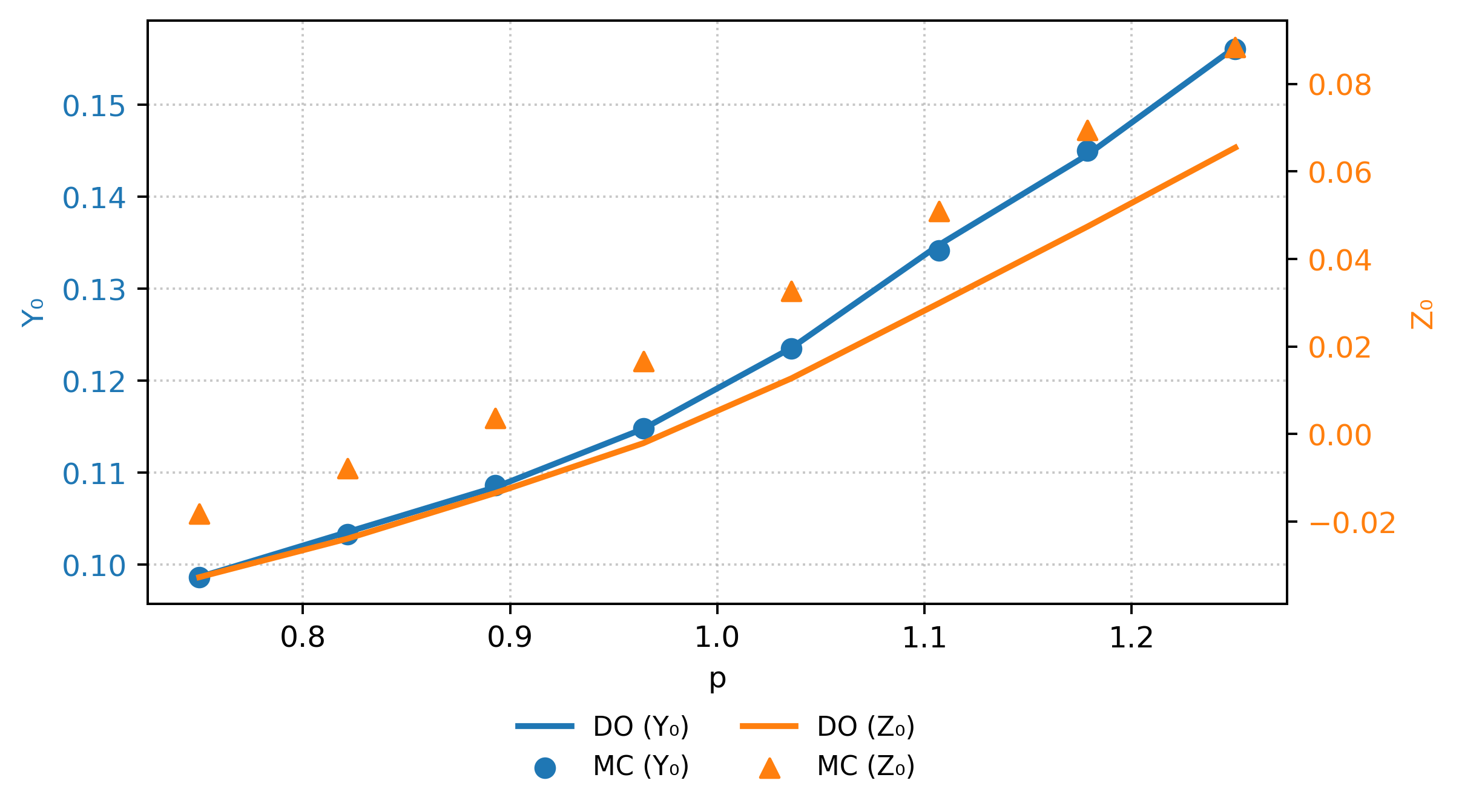}
        \caption{Example 1 -- Power Lookback Floating Strike Put.}
    \end{subfigure}
    \caption{}
\end{figure}

\begin{figure}[H]
    \centering
    \begin{subfigure}{7.0cm}
        \centering
        \includegraphics[width=7.0cm]{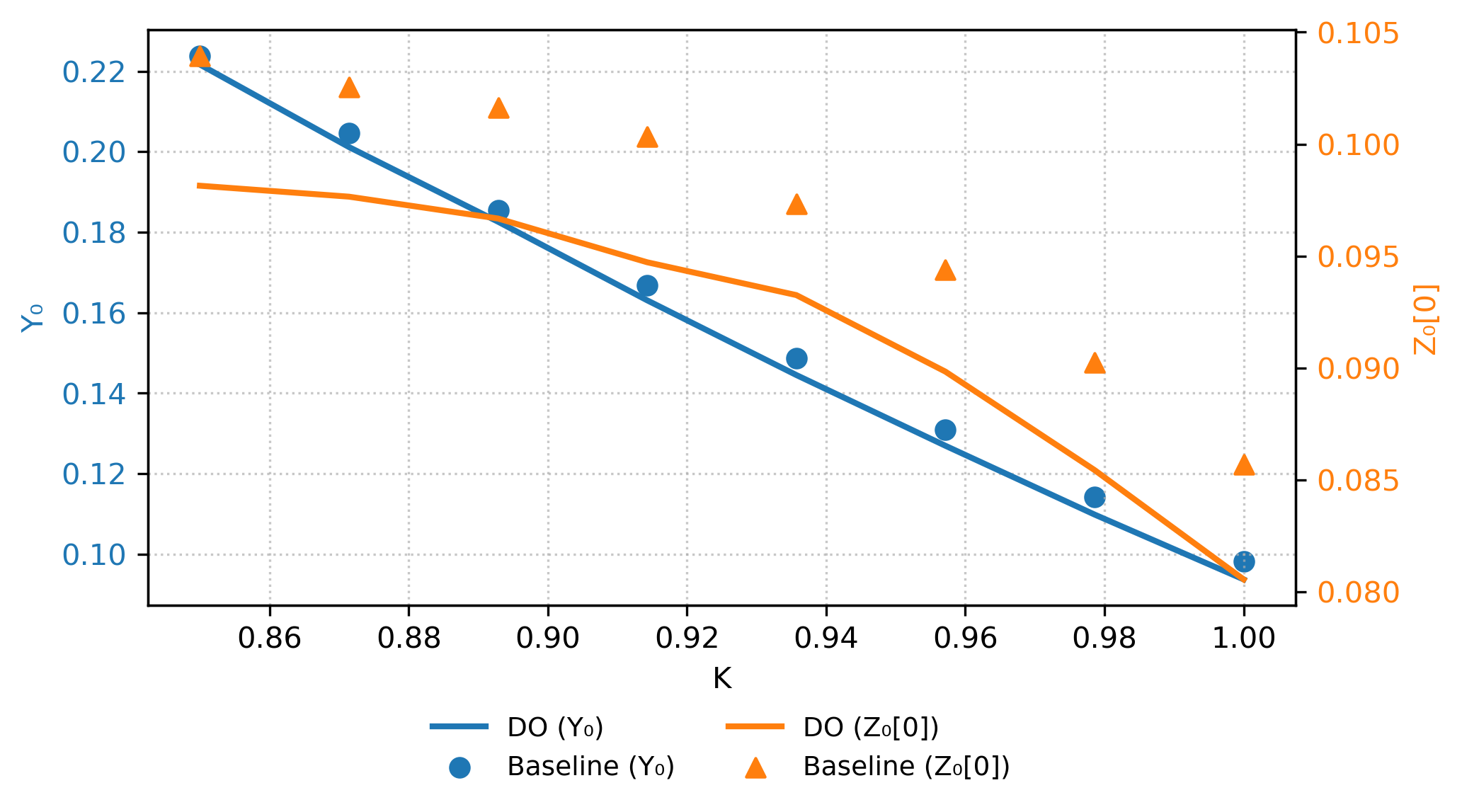}
        \caption{Example 2 -- Asian Call -- Min.}
    \end{subfigure}
    \hspace{0.06cm}
    \begin{subfigure}{7.0cm}
        \centering
        \includegraphics[width=7.0cm]{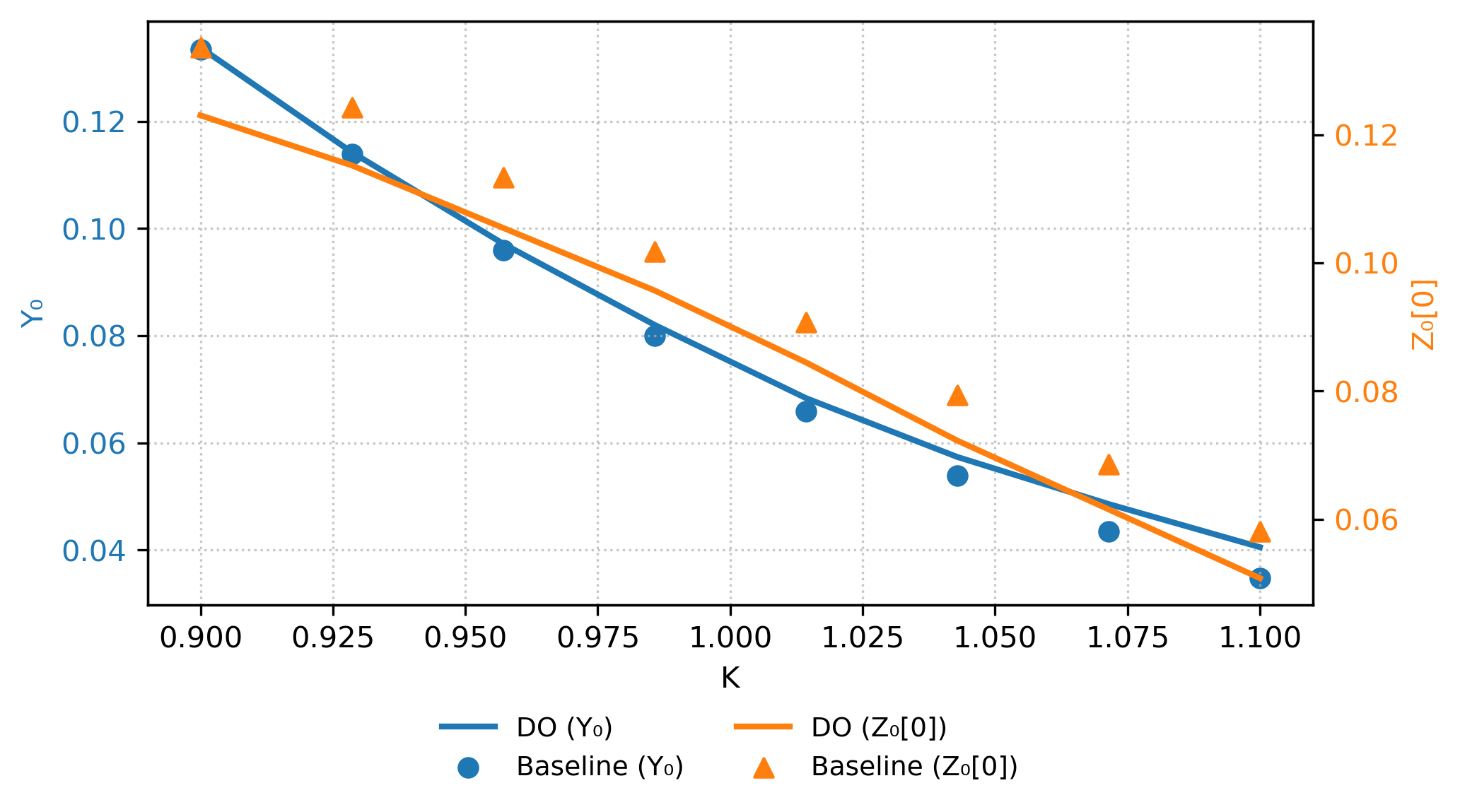}
        \caption{Example 2 -- Asian Call -- Ratio, $i = 1$.}
    \end{subfigure}
    \caption{}
\end{figure}

\begin{figure}[H]
    \centering
    \begin{subfigure}{7.0cm}
        \centering
        \includegraphics[width=7.0cm]{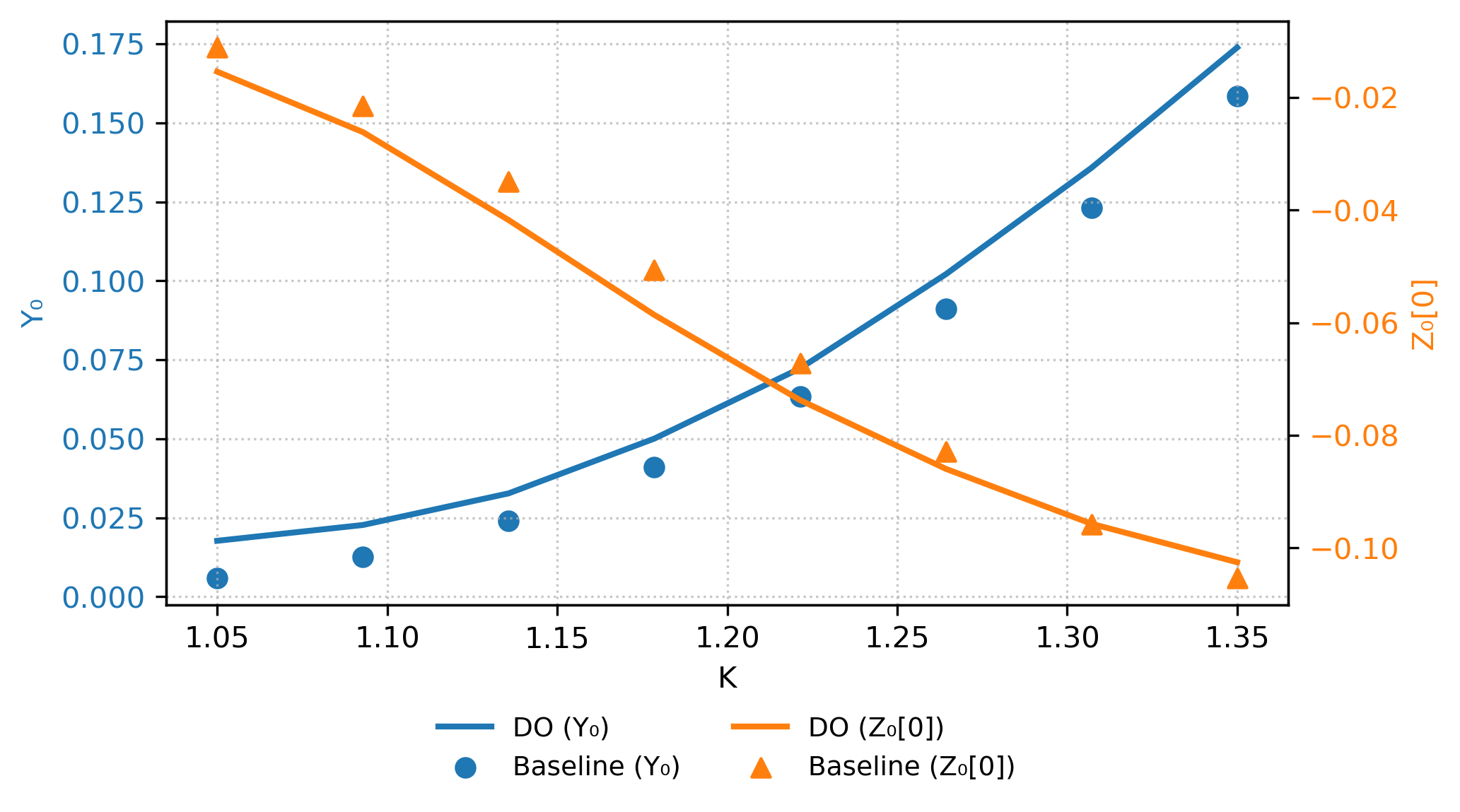}
        \caption{Example 2 -- Asian Put -- Max.}
    \end{subfigure}
    \hspace{0.06cm}
    \begin{subfigure}{7.0cm}
        \centering
        \includegraphics[width=7.0cm]{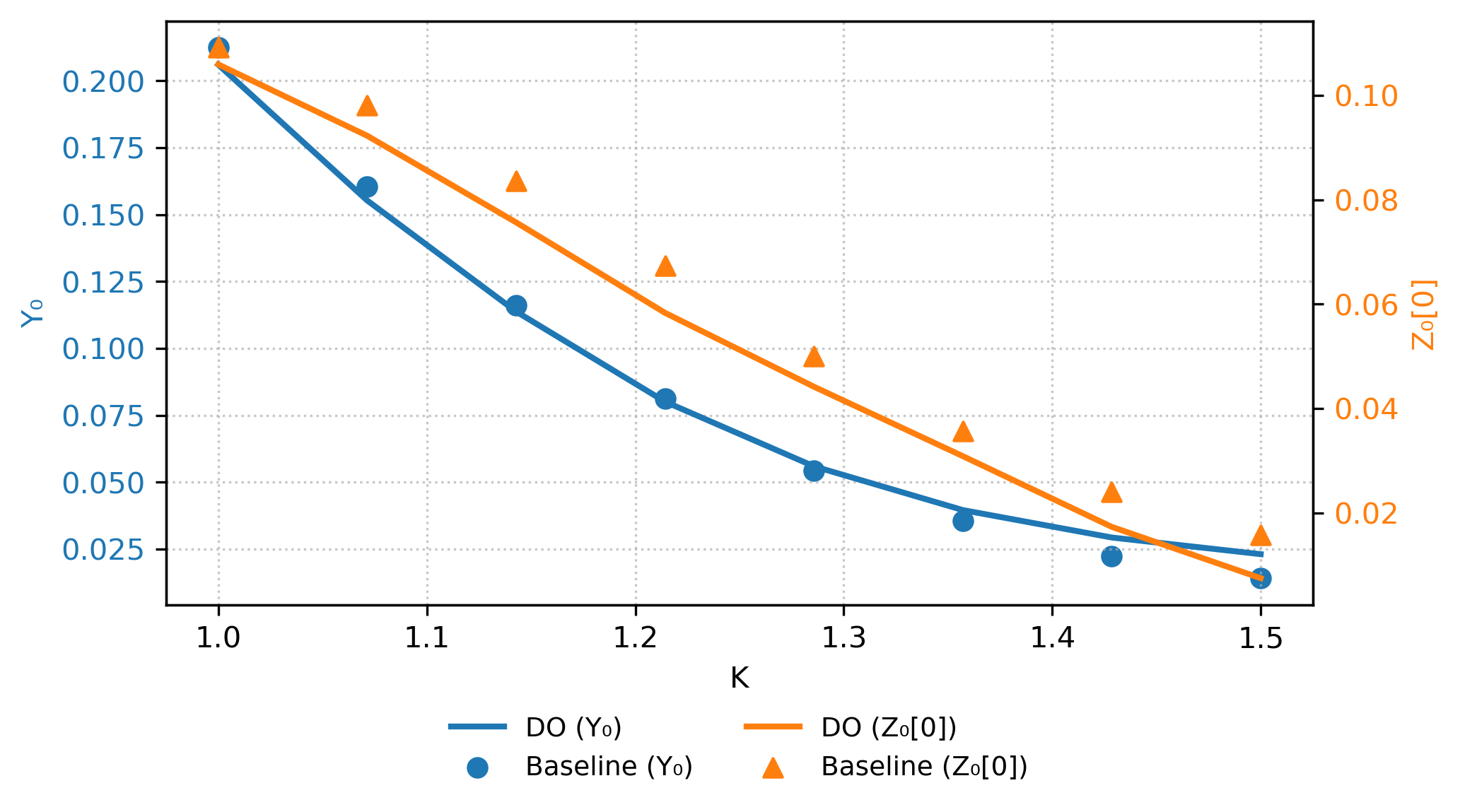}
        \caption{Example 2 -- Call -- Max.}
    \end{subfigure}
    \caption{}
\end{figure}

\begin{figure}[H]
    \centering
    \begin{subfigure}{7.0cm}
        \centering
        \includegraphics[width=7.0cm]{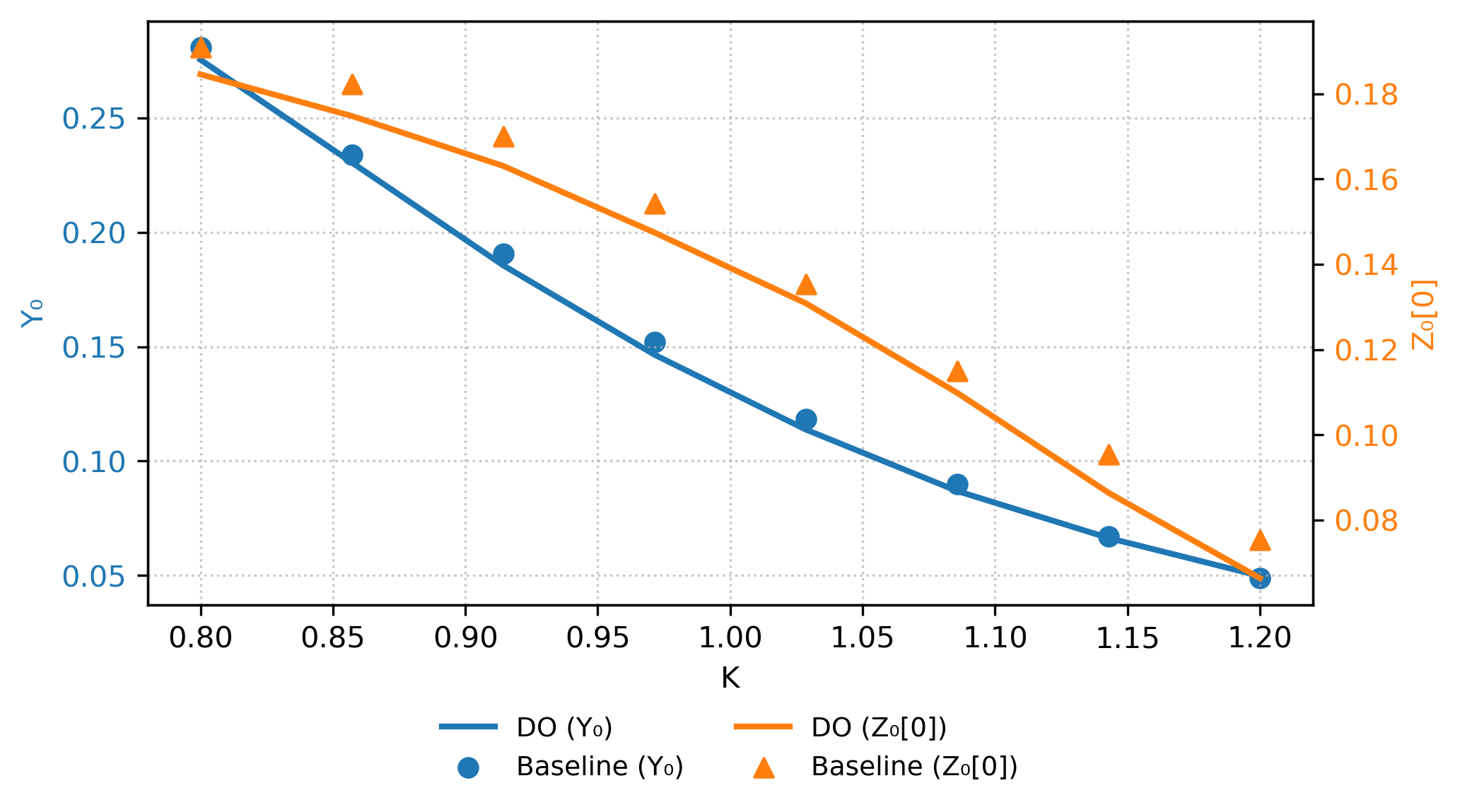}
        \caption{Example 2 -- Call -- Single, $i = 1$.}
    \end{subfigure}
    \hspace{0.06cm}
    \begin{subfigure}{7.0cm}
        \centering
        \includegraphics[width=7.0cm]{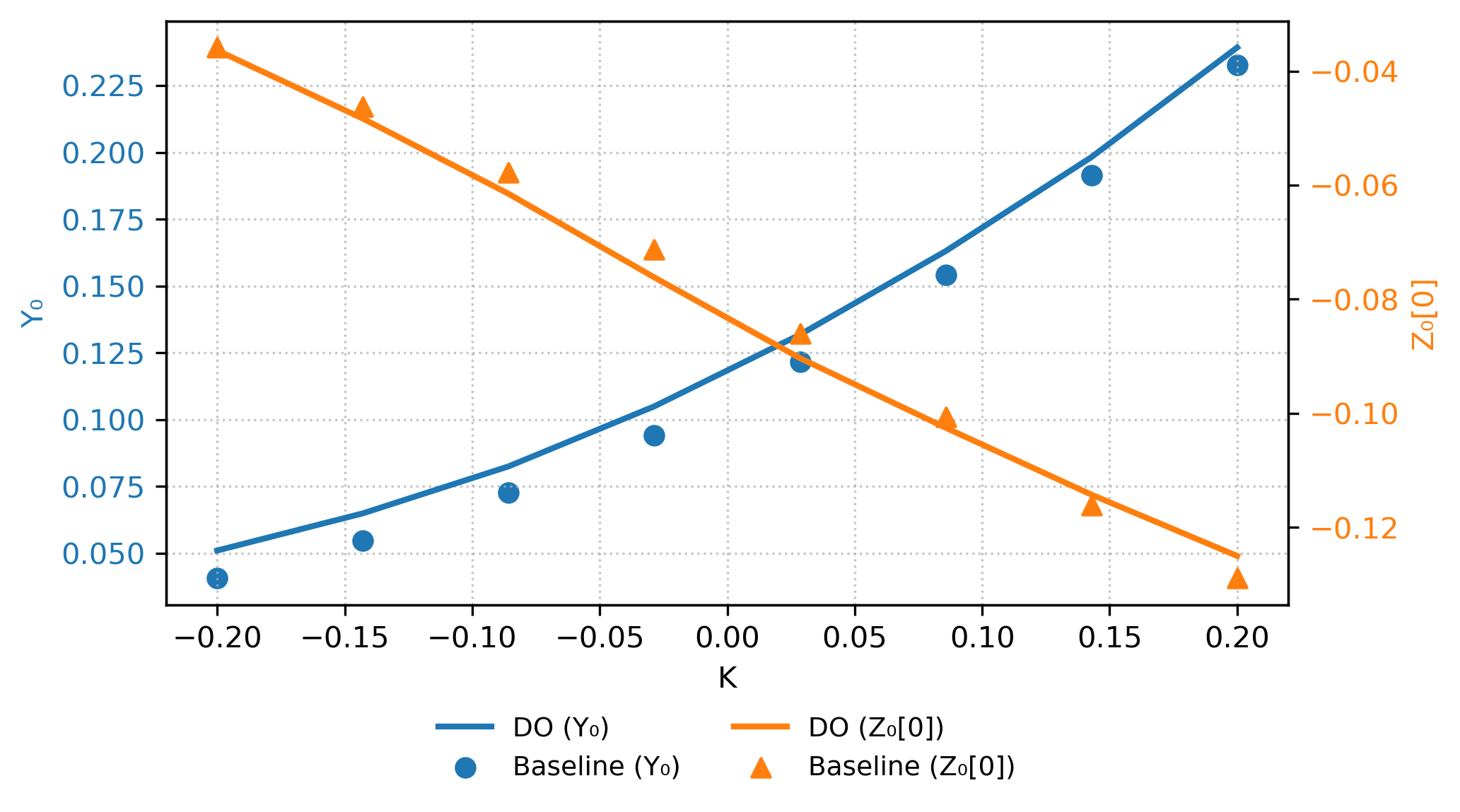}
        \caption{Example 2 -- Put -- Spread, $i=1$.}
    \end{subfigure}
    \caption{}
\end{figure}

\begin{figure}[H]
    \centering
    \begin{subfigure}{7.0cm}
        \centering
        \includegraphics[width=7.0cm]{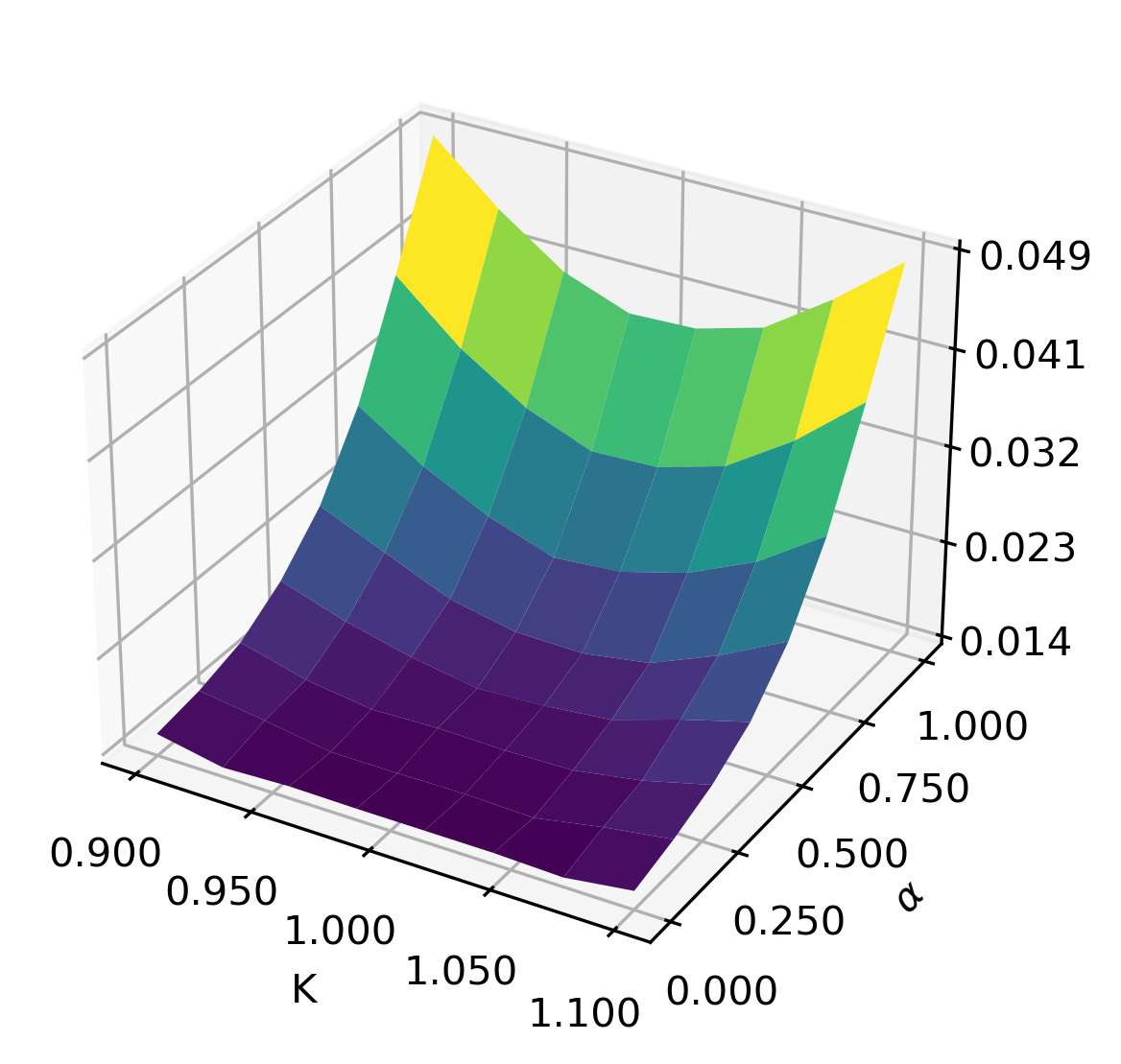}
        \caption{Example 2 -- Asian Put -- Basket weighted, Deep Operator BSDE approximation of $\mathcal{Y}_0$.}
    \end{subfigure}
    \hspace{0.06cm}
    \begin{subfigure}{7.0cm}
        \centering
        \includegraphics[width=7.0cm]{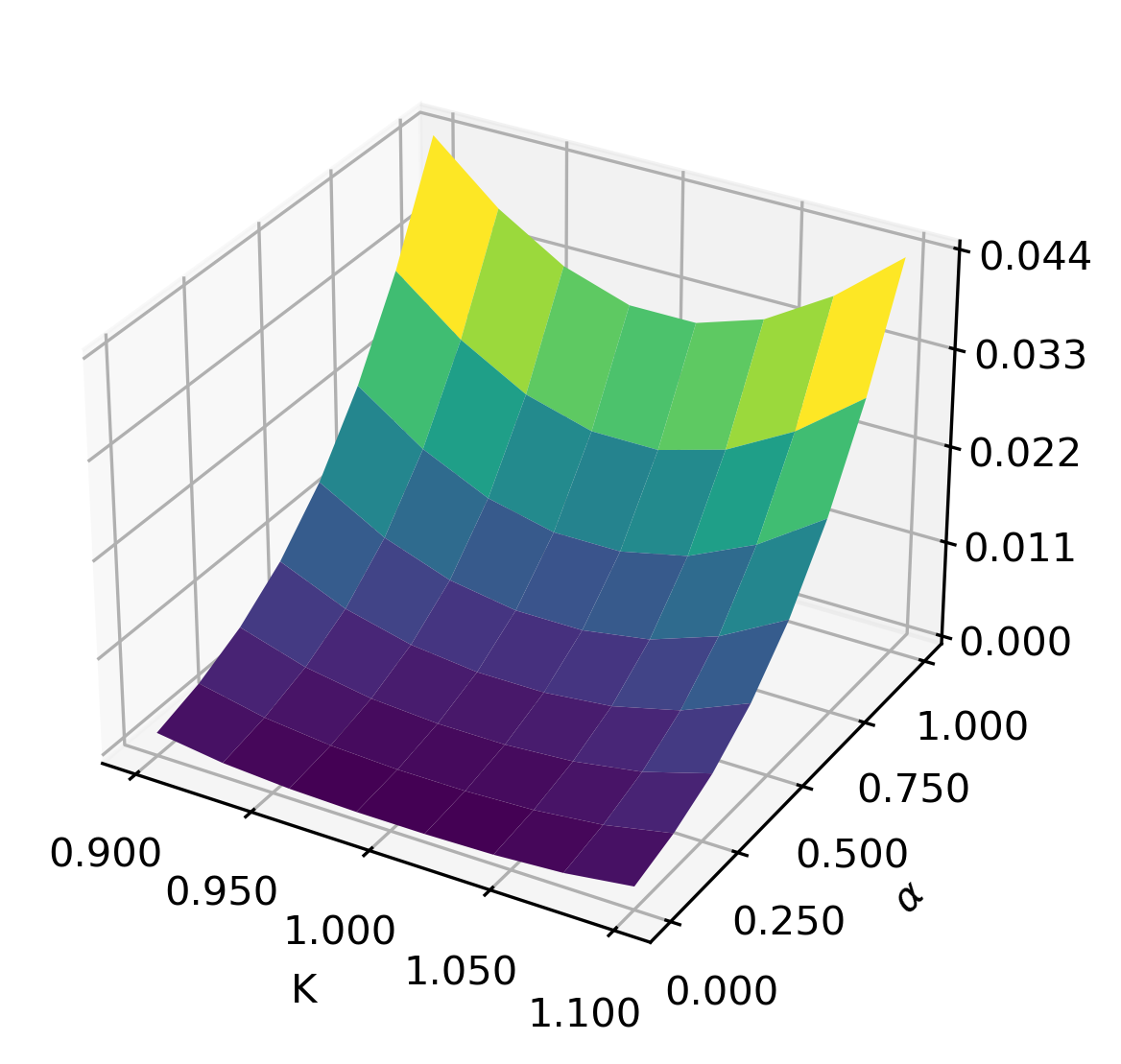}
        \caption{Example 2 -- Asian Put -- Basket weighted, baseline approximation of $\mathcal{Y}_0$.}
    \end{subfigure}
    \caption{}
\end{figure}

\begin{figure}[H]
    \centering
    \begin{subfigure}{7.0cm}
        \centering
        \includegraphics[width=7.0cm]{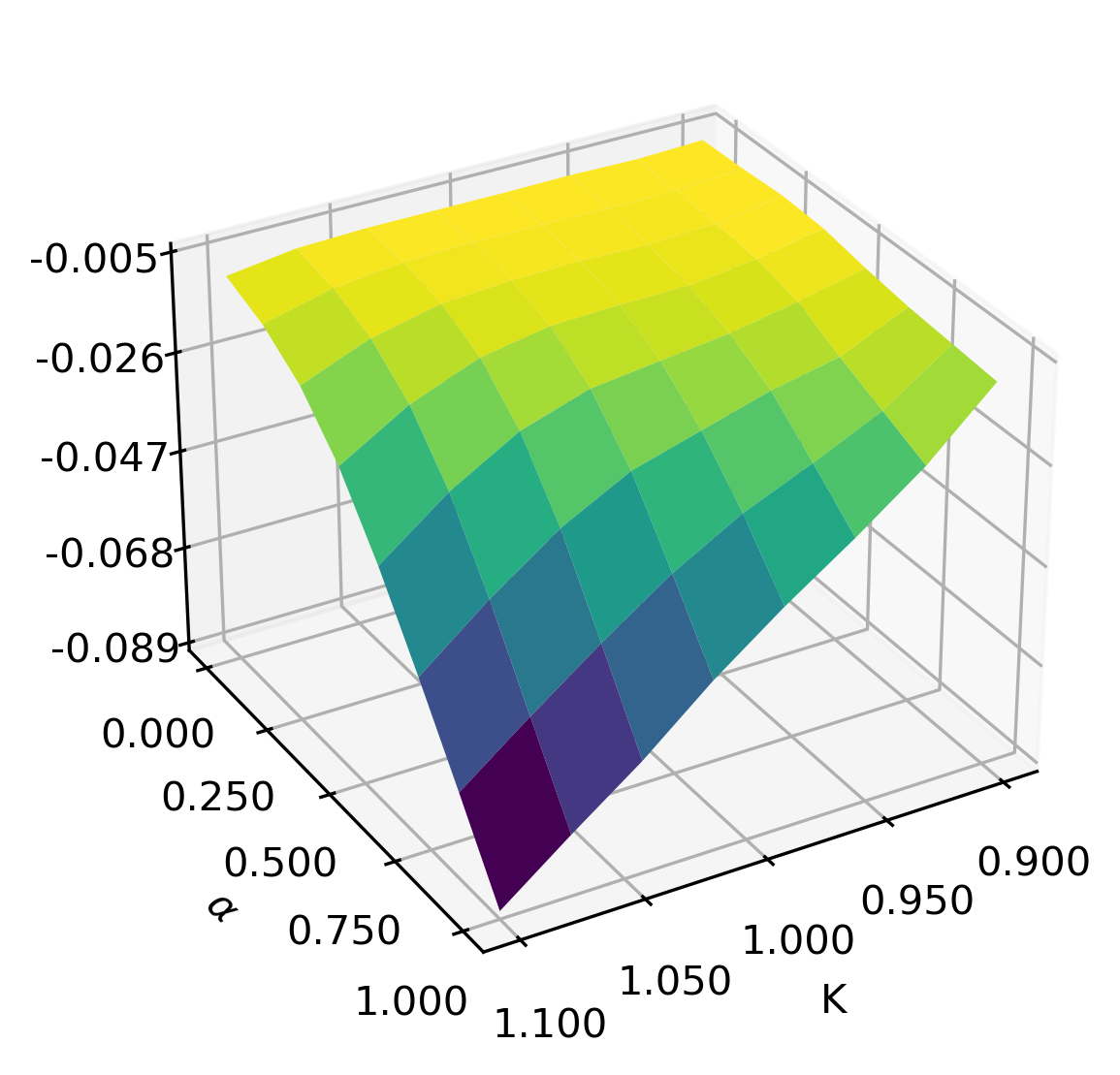}
        \caption{Example 2 -- Asian Put -- Basket weighted, Deep Operator BSDE approximation of the first coordinate of $\mathcal{Z}_0$.}
    \end{subfigure}
    \hspace{0.06cm}
    \begin{subfigure}{7.0cm}
        \centering
        \includegraphics[width=7.0cm]{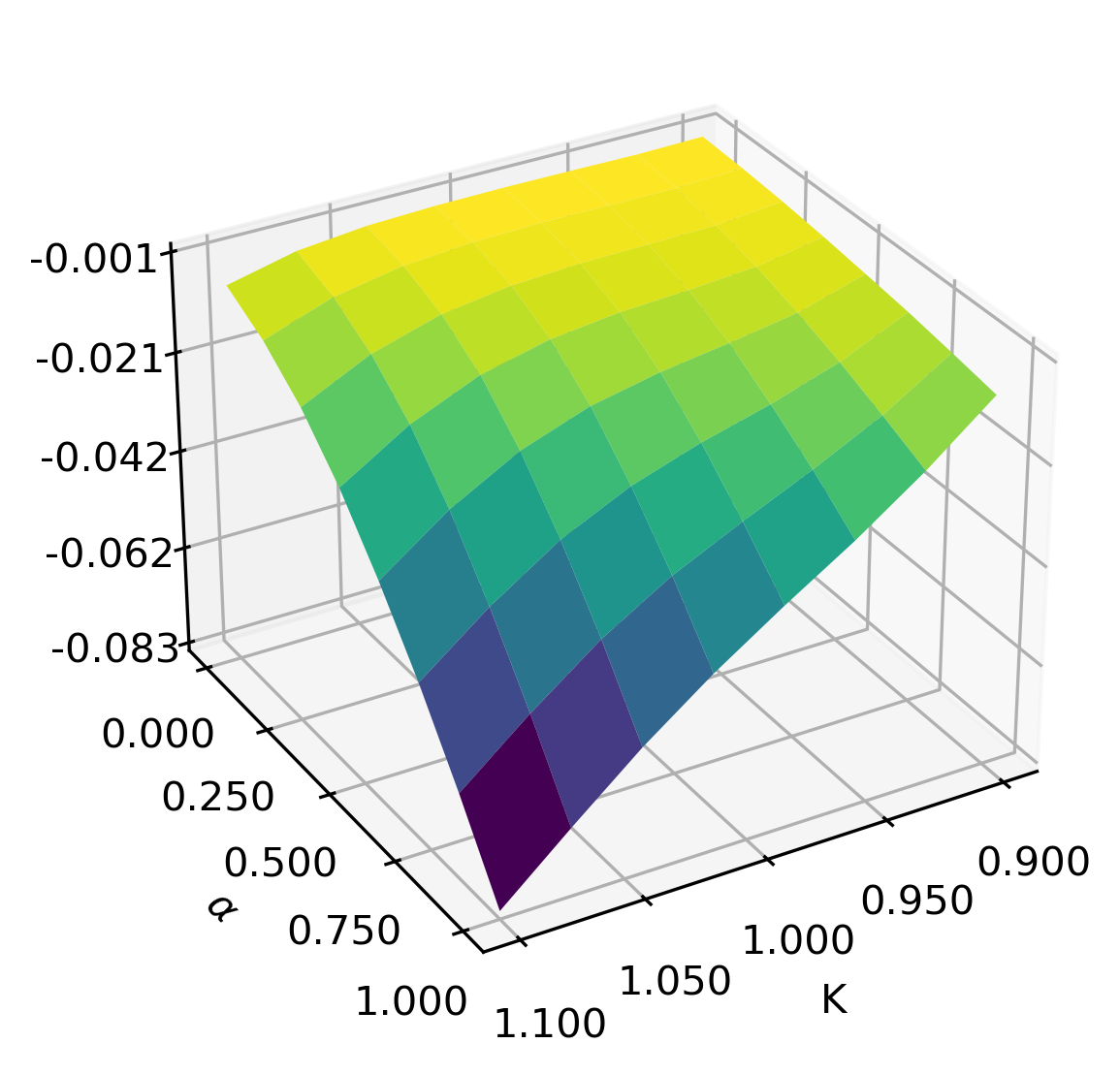}
        \caption{Example 2 -- Asian Put -- Basket weighted, baseline approximation of the first coordinate of $\mathcal{Z}_0$.}
    \end{subfigure}
    \caption{}
\end{figure}


\bibliographystyle{imsart-nameyear} 
\bibliography{bibliography}       

\end{document}